\theoremstyle{plain}
\newtheorem{thm}{Theorem}[section]
\newtheorem{prop}[thm]{Proposition}
\newtheorem{cor}[thm]{Corollary}
\newtheorem{lemma}[thm]{Lemma}
\numberwithin{equation}{section}
\theoremstyle{remark}
\newtheorem{rmq}[thm]{Remark}
\newtheorem{dfn}[thm]{Definition}
\begin{document}

\title{Counterexamples to Strichartz estimates for the wave equation in domains}
\author{Oana Ivanovici  
 \\Universite Paris-Sud, Orsay,\\
Mathematiques, Bat. 430, 91405 Orsay Cedex, France\\
oana.ivanovici@math.u-psud.fr}
\date{ }
\maketitle

\section{Introduction}
Let $\Omega$ be the upper half plane $\{(x,y)\in\mathbb{R}^{2},x> 0,y\in\mathbb{R}\}$. Define the Laplacian on $\Omega$ to be $\Delta_{D}=\partial^{2}_{x}+(1+x)\partial^{2}_{y}$, together with Dirichlet boundary conditions on $\partial\Omega$: one may easily see that $\Omega$, with the metric inherited from $\Delta_{D}$, is a strictly convex domain. We shall prove that, in such a domain $\Omega$, Strichartz estimates for the wave equation suffer losses when compared to the usual case $\Omega=\mathbb{R}^{2}$, at least for a subset of the usual range of indices. Our construction is microlocal in nature; in \cite{doi} we prove that the same result holds true for any regular domain $\Omega\subset\mathbb{R}^{d}$, $d=2,3,4$, provided there exists a point in $T^{*}\partial\Omega$ where the boundary is microlocally strictly convex. 
\begin{dfn}
Let $q,r\geq
2$, $(q,r,\alpha)\neq(2,\infty,1)$. A pair $(q,r)$ is called $\alpha$-admissible if
\begin{equation}\label{admis}
\frac{1}{q}+\frac{\alpha}{r}\leq\frac{\alpha}{2},
\end{equation}
 and sharp $\alpha$-admissible whenever equality holds in \eqref{admis}. For a given dimension $d$,  a pair $(q,r)$ will be \emph{wave-admissible} if $d\geq 2$ and $(q,r)$ is
$\frac{d-1}{2}$-admissible; it will be \emph{Schrödinger-admissible} if
$d\geq 1$ and $(q,r)$ is sharp $\frac{d}{2}$-admissible. Finally, notice that  the endpoint
$(2,\frac{2\alpha}{\alpha-1})$ is sharp $\alpha$-admissible when $\alpha>1$.

When $\alpha=1$ the endpoint pairs are inadmissible and the endpoint estimates
for wave equation ($d=3$) and Schr\"{o}dinger equation ($d=2$) are
known to fail: one obtains a logarithmic loss of derivatives
which gives Strichartz estimates with $\epsilon$ losses.
\end{dfn}
Our main result reads as follows:
\begin{thm}\label{thm1}
Let $(q,r)$ be a sharp wave-admissible pair in dimension $d=2$ with $4<r<\infty$.
There exist $\psi_{j}\in
C^{\infty}_{0}(\mathbb{R})$ and for every small $\epsilon>0$ there exist $c_{\epsilon}>0$ and sequences
$V_{h,j,\epsilon}\in C^{\infty}(\Omega)$, $j=\overline{0,1}$ with $\psi_{j}(hD_{y})V_{h,j,\epsilon}=V_{h,j,\epsilon}$ (meaning that $V_{h,j,\epsilon}$ are localized at frequency $1/h$), such that the solution $V_{h,\epsilon}$ to the wave equation with
Dirichlet boundary conditions
\begin{equation}\label{wavvv}
(\partial^{2}_{t}-\Delta_{D})V_{h,\epsilon}=0,\quad V_{h,\epsilon}|_{[0,1]\times\partial\Omega}=0,
\quad V_{h,\epsilon}|_{t=0}=V_{h,0,\epsilon},\quad
\partial_{t}V_{h,\epsilon}|_{t=0}=V_{h,1,\epsilon}
\end{equation}
satisfies
\begin{equation}\label{ince}
\sup_{h,\epsilon>0}(\|V_{h,0,\epsilon}\|_{\dot{H}^{2(\frac{1}{2}-\frac{1}{r})-\frac{1}{q}+\frac{1}{6}(\frac{1}{4}-\frac{1}{r})-2\epsilon}(\Omega)}+\|V_{h,1,\epsilon}\|_{\dot{H}^{2(\frac{1}{2}-\frac{1}{r})-\frac{1}{q}+\frac{1}{6}(\frac{1}{4}-\frac{1}{r})-2\epsilon-1}(\Omega)})\leq 1
\end{equation}
and
\begin{equation}
\lim_{h\rightarrow 0}\|V_{h,\epsilon}\|_{L^{q}_{t}([0,1],L^{r}(\Omega))}=\infty.
\end{equation}
Moreover $V_{h,\epsilon}$ has compact support for $x$ in $(0,h^{\frac{1-\epsilon}{2}}]$ and is well localized at spatial frequency $1/h$; hence, the left hand side in \eqref{ince} is equivalent to
\[
h^{-\frac{3}{2}(\frac{1}{2}-\frac{1}{r})-\frac{1}{6}(\frac{1}{4}-\frac{1}{r})+2\epsilon}(\|V_{h,0,\epsilon}\|_{L^{2}(\Omega)}+h\|V_{h,1,\epsilon}\|_{L^{2}(\Omega)}).
\]
\end{thm}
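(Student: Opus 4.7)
The plan is to construct the counterexample as a normalized superposition of Dirichlet eigenfunctions of $-\Delta_{D}$ that lie in the whispering-gallery regime (Airy modes concentrated near the boundary), and to show that short-time constructive interference gives an $L^{q}_{t}L^{r}_{x}$-norm much larger than the $\dot{H}^{s}$-norm of the data, with the gap matching the claimed loss.

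First, separating variables at $y$-frequency $\eta=1/h$ reduces $-\Delta_{D}$ to $-\partial_{x}^{2}+(1+x)/h^{2}$ on $(0,\infty)$; the rescaling $x=h^{2/3}z$ converts it into Airy's equation, and the Dirichlet eigenvalues are $\lambda_{k}(h)=h^{-2}+h^{-4/3}|\alpha_{k}|$, where $\{-\alpha_{k}\}$ are the zeros of $\mathrm{Ai}$. The normalized eigenfunctions are $\phi_{k,h}(x)=c_{k}h^{-1/6}\mathrm{Ai}(h^{-2/3}x-\alpha_{k})$ with $c_{k}\sim|\alpha_{k}|^{-1/4}$. Multiplying $\phi_{k,h}(x)e^{iy/h}$ by a $y$-wavepacket $\psi(hD_{y})$ built from a smooth cutoff $\psi$ centred at $1$ yields an exact solution of \eqref{wavvv} modulo $O(h^{\infty})$, and linear combinations over $k$ stay solutions with explicit time factors $\cos(t\sqrt{\lambda_{k}})$ and $\sin(t\sqrt{\lambda_{k}})/\sqrt{\lambda_{k}}$.

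Next, I choose the index cutoff $N=N(h)$ so that the deepest turning point $h^{2/3}|\alpha_{N}|$ matches the prescribed support scale $h^{(1-\epsilon)/2}$; this gives $N\sim h^{-1/4-3\epsilon/4}$, which is the key quantitative input. Setting
$$V_{h,0,\epsilon}=a_{h}\chi(y)\psi(hD_{y})\Bigl(e^{iy/h}\sum_{k=1}^{N}\phi_{k,h}(x)\Bigr),\qquad V_{h,1,\epsilon}=a_{h}\chi(y)\psi(hD_{y})\Bigl(e^{iy/h}\sum_{k=1}^{N}i\sqrt{\lambda_{k}(h)}\phi_{k,h}(x)\Bigr),$$
with $\chi$ a smooth compactly supported cutoff in $y$ and $a_{h}$ a normalizing constant, orthogonality of the $\phi_{k,h}$ on $(0,\infty)$ together with Plancherel in $y$ gives $\|V_{h,0,\epsilon}\|_{L^{2}(\Omega)}+h\|V_{h,1,\epsilon}\|_{L^{2}(\Omega)}\sim|a_{h}|\sqrt{N}$, and the equivalence stated at the end of the theorem then fixes $|a_{h}|$ so that the left-hand side of \eqref{ince} equals $1$.

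The propagated solution is $V_{h,\epsilon}(t,x,y)\simeq a_{h}\chi(y)e^{iy/h}\sum_{k=1}^{N}\cos(t\sqrt{\lambda_{k}})\phi_{k,h}(x)$, and on the short time window $|t|\lesssim h^{1/3}/|\alpha_{N}|$ dictated by the frequency spacing $|\sqrt{\lambda_{k}}-\sqrt{\lambda_{1}}|\sim|\alpha_{k}|h^{-1/3}$ the $N$ cosines stay essentially equal to $1$. In that window one wants to extract, inside the bulk oscillatory range $x\in(0,h^{(1-\epsilon)/2})$, the pointwise lower bound
$$\Bigl|\sum_{k=1}^{N}\phi_{k,h}(x)\Bigr|\gtrsim N^{\gamma(r)}(h^{(1-\epsilon)/2}-x)^{-1/4},$$
for an exponent $\gamma(r)>0$ produced by a stationary-phase analysis in the continuous Airy index; integrating the $r$-th power over $\Omega\times[0,ch^{1/3}|\alpha_{N}|^{-1}]$ and restoring the normalizing factor $|a_{h}|$ then yields a lower bound of the form $h^{-\epsilon'}$ that forces $\|V_{h,\epsilon}\|_{L^{q}_{t}L^{r}_{x}}\to\infty$. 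The main obstacle is precisely this pointwise bound: the Airy eigenfunctions oscillate in the bulk with phase $\tfrac{2}{3}(a_{k}-x)^{3/2}/h$ depending non-trivially on $k$, so the naive triangle inequality yields only an incoherent $\sqrt{N}$ gain. The correct coherent gain will be extracted by replacing $\sum_{k}$ by $\int\mathrm{Ai}(h^{-2/3}x-\alpha)\rho(\alpha)\,d\alpha$, with $\rho(\alpha)=\alpha^{1/2}/\pi$ the asymptotic density of Airy zeros, and performing stationary phase to identify the $x$-values at which many modes resonate simultaneously; a parallel subtlety is to control the $y$-spreading of the packet $\psi(hD_{y})$ over the chosen time window so that the $x$-concentration is not blurred transversally, which requires matching the width of $\psi$ to the length of the window.
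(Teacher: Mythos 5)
Your strategy is the whispering--gallery one, and the paper itself proves (Theorem \ref{thm2}, part 2) that this route fails for the wave equation: each gallery mode obeys the \emph{free} Strichartz estimates. Superposing $N$ modes can at best gain a factor $N^{1/2}$ over the free bound, so everything hinges on your pointwise coherence claim $\bigl|\sum_{k\le N}\phi_{k,h}(x)\bigr|\gtrsim N^{\gamma(r)}\bigl(h^{(1-\epsilon)/2}-x\bigr)^{-1/4}$, which you leave unproven and which appears to be false. A $(a-x)^{-1/4}$ caustic profile requires all components of the packet to share the \emph{same} turning point $x=a$; your modes have turning points $a_k=h^{2/3}\omega_k$ spread over the whole interval $(h^{2/3},h^{(1-\epsilon)/2})$. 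Writing $Ai(X-\omega_k)\sim(\omega_k-X)^{-1/4}\cos\bigl(\tfrac23(\omega_k-X)^{3/2}-\tfrac{\pi}{4}\bigr)$ with $X=h^{-2/3}x$, the phase has $k$-derivative $\pi(1-X/\omega_k)^{1/2}$, which never vanishes away from the turning point, while modes with $\omega_k<X$ are in their forbidden region and are exponentially small. Hence the only coherent contribution at a given $x$ comes from the $O(X^{1/2})$ modes with $|\omega_k-X|=O(1)$, producing a bounded, smooth profile of size $\sim h^{-1/6}X^{1/4}$ and no singularity at $x=a_N$. Feeding this corrected bound into your scheme, together with the factor $|t|^{1/q}\sim h^{1/(2q)}$ you forfeit by integrating only over the coherence window $|t|\lesssim h^{1/3}\omega_N^{-1}\sim h^{1/2}$, the quotient $\|V_{h,\epsilon}\|_{L^qL^r}/\|V_{h,0,\epsilon}\|_{L^2}$ stays \emph{below} the free-space bound by a positive power of $h$: no contradiction is produced.

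The paper's proof is organized around exactly the two points your sketch elides. First, the right object is not a flat superposition of eigenmodes but a conormal wave with a genuine cusp, $u_h=\int e^{\frac{i}{h}(y\eta-t(1+a)^{1/2}\eta+(x-a)\xi+\xi^3/3\eta^2)}g\,\Psi\,d\xi\,d\eta$, whose entire frequency content folds at the single caustic $x=a=h^{(1-\epsilon)/2}$, giving $\|u_h(t,\cdot)\|_{L^r}\simeq h^{1/3+5/(3r)}$ for $r>4$ (Proposition \ref{propnorm}) against $\|u_h(0,\cdot)\|_{L^2}\simeq h^{1+\delta/4}$. Second, since one cusp lives only for a time $\sim a^{1/2}$, the full $L^q([0,1])$ norm is recovered by letting it reflect $N\sim a^{-1/2}$ times: the symbols $g^n$ are built by inverting the boundary trace operators $I_{\pm}$ via an Egorov-type argument (Propositions \ref{propimportant}, \ref{propestdirbound}) so that the Dirichlet condition holds and the pieces $u^n_h$ have disjoint time supports tiling $[0,1]$; finally one passes from the approximate to the exact solution (Proposition \ref{proper}, Corollary \ref{cor}), a step you also omit. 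To salvage your approach you would need (i) coefficients engineered to create a common caustic --- which essentially reconstructs the paper's cusp --- and (ii) a proof of refocusing at each of the $\sim h^{-1/4}$ return times, i.e.\ precisely the multiple-reflection analysis you have skipped. (A further minor issue: multiplying by $\chi(y)$ destroys both the exact-solution property and the exact relation $\psi_j(hD_y)V_{h,j,\epsilon}=V_{h,j,\epsilon}$.)
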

\begin{rmq}
In this paper we are rather interested in negative results: Theorem \ref{thm1} shows that for $r>4$ losses of derivatives are unavoidable for
Strichartz estimates, and more specifically a regularity loss of at least
$\frac{1}{6}(\frac{1}{4}-\frac{1}{r})$ occurs when compared to the free case. 
\end{rmq}
\begin{rmq}
The key feature of the domain leading to the counterexample is the strict-convexity of the boundary, i.e. the presence of gliding rays, or highly-multiply reflected geodesics. The particular manifold studied in this paper is one for which the eigenmodes are explicitly in terms of Airy's functions and the phases for the oscillatory integrals to be evaluated have precise form. In a forthcoming work \cite{doi} we construct examples for general manifolds with a gliding ray, but the heart of the matter is well illustrated by this particular example which generalizes using Melrose's equivalence of glancing hypersurfaces theorem.
\end{rmq}
We now recall known results in $\mathbb{R}^{d}$. Let $\Delta_{d}$ denote the Laplace operator in the flat space $\mathbb{R}^{d}$.
Strichartz estimates read as follows (see \cite{kt98}):
\begin{prop}\label{propund}
Let $d\geq 2$, $(q,r)$ be wave-admissible and consider $u$, solution to the wave equation
\begin{equation}
(\partial^{2}_{t}-\Delta_{d})u(t,x)=0,\quad (t,x)\in
\mathbb{R}\times\mathbb{R}^{d},\quad u|_{t=0}=u_{0},\quad
\partial_{t}u|_{t=0}=u_{1}
\end{equation}
for $u_{0}, u_{1}\in C^{\infty}(\mathbb{R}^{d})$; then there is a constant $C$ such that
\begin{equation}\label{striw}
\|u\|_{L^{q}(\mathbb{R},L^{r}(\mathbb{R}^{d}))}\leq C(\|u_{0}\|_{\dot{H}^{d(\frac{1}{2}-\frac{1}{r})-\frac{1}{q}}(\mathbb{R}^{d})}+\|u_{1}\|_{\dot{H}^{d(\frac{1}{2}-\frac{1}{r})-\frac{1}{q}-1}(\mathbb{R}^{d})}).
\end{equation}
\end{prop}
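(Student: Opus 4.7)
The plan is to follow the by-now standard path of Keel--Tao \cite{kt98}: reduce to a frequency-localized half-wave propagator, prove the dispersive fixed-time bound by stationary phase, feed it into the abstract Strichartz machinery, and finally reassemble by Littlewood--Paley. First I would split the solution into two half-wave pieces, writing $u(t,\cdot)=\cos(t\sqrt{-\Delta_d})u_0+\frac{\sin(t\sqrt{-\Delta_d})}{\sqrt{-\Delta_d}}u_1$, so that it suffices to bound $U(t)f:=e^{it\sqrt{-\Delta_d}}f$ from $\dot H^s(\mathbb{R}^d)$ into $L^q_tL^r_x$ with $s=d(\tfrac12-\tfrac1r)-\tfrac1q$. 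By scaling $x\mapsto \lambda x$, $t\mapsto \lambda t$, it is enough to prove the estimate for data spectrally localized in a dyadic shell $|\xi|\sim 1$, say $\widehat{f}=\varphi(\xi)\widehat{f}$ with $\varphi\in C_0^\infty(\{1/2<|\xi|<2\})$, and then sum over dyadic scales using the Littlewood--Paley square function characterization of $L^r$.

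The analytic heart of the proof is the dispersive estimate for the frequency-localized half-wave propagator,
\begin{equation*}
\|U(t)P_1 f\|_{L^\infty(\mathbb{R}^d)}\leq C(1+|t|)^{-(d-1)/2}\|f\|_{L^1(\mathbb{R}^d)},
\end{equation*}
where $P_1$ is the Littlewood--Paley projector at frequency $1$. This is obtained by writing the kernel as $K_t(x)=\int e^{i(x\cdot\xi+t|\xi|)}\varphi(|\xi|)\,d\xi$ and applying stationary phase in the radial and angular variables: the phase $\Phi_\omega(\rho)=\rho(\omega\cdot\hat x+\mathrm{sgn}(t))$ has a nondegenerate critical point only when $\omega=\mp \hat x$, and after localizing near these points one is left with an oscillatory integral on the sphere $S^{d-1}$ whose Hessian has rank $d-1$, producing the sharp decay $|t|^{-(d-1)/2}$. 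Combined with the trivial $L^2\to L^2$ bound $\|U(t)\|_{L^2\to L^2}=1$, Riesz--Thorin interpolation gives $\|U(t)P_1\|_{L^{r'}\to L^r}\lesssim |t|^{-(d-1)(1/2-1/r)}$ for $2\leq r\leq\infty$.

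With this in hand I would invoke the abstract Strichartz theorem of Keel--Tao: any one-parameter family $U(t)$ of operators satisfying the uniform $L^2$-bound together with a dispersive bound $\|U(t)U(s)^*\|_{L^{r'_0}\to L^{r_0}}\lesssim |t-s|^{-\sigma}$ for some $\sigma>0$ and some $r_0$ (here $\sigma=(d-1)/2$, $r_0=\infty$) yields $\|U(t)f\|_{L^q_tL^r_x}\lesssim\|f\|_{L^2}$ for every pair $(q,r)$ with $\tfrac1q+\tfrac{\sigma}{r}=\tfrac{\sigma}{2}$, $q\geq 2$, and $(q,r,\sigma)\neq(2,\infty,1)$. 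Applied to $U(t)P_1$ this gives $\|U(t)P_1 f\|_{L^q_tL^r_x}\lesssim\|f\|_{L^2}$ for every sharp wave-admissible pair, and by the scaling invariance of the estimate one recovers $\|U(t)P_k f\|_{L^q_tL^r_x}\lesssim 2^{ks}\|P_k f\|_{L^2}$ for every dyadic $k\in\mathbb{Z}$. Squaring, summing in $k$, and using Minkowski's inequality (since $q,r\geq 2$) together with the Littlewood--Paley equivalence $\|f\|_{L^r}\simeq\|(\sum_k|P_kf|^2)^{1/2}\|_{L^r}$ yields the homogeneous estimate \eqref{striw}.

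The main obstacle is the endpoint case $q=2$, which requires the full strength of the Keel--Tao bilinear/atomic decomposition argument: one dyadically decomposes the bilinear form $\langle U(t)U(s)^*F(s),G(t)\rangle$ in $|t-s|$, interpolates bilinearly between the dispersive bound and the trivial $L^2$ bound to obtain off-diagonal $L^{a'}\to L^a$ bounds for exponents slightly off the endpoint, and then sums the resulting geometric series using atomic decompositions in $L^2_tL^{r'}_x$. The remaining steps---the stationary-phase analysis, the Littlewood--Paley reassembly, and the non-endpoint Hardy--Littlewood--Sobolev argument---are standard and mechanical by comparison.
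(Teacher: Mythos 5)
The paper does not prove Proposition \ref{propund}: it is recalled from Keel and Tao \cite{kt98}, and your outline is precisely the standard proof of that result (half-wave reduction, frequency-localized dispersive estimate by stationary phase, the abstract $TT^*$/Keel--Tao machinery including the bilinear endpoint argument, and Littlewood--Paley reassembly), so it is correct and consistent with the route the paper points to. The only detail worth adding is that the statement allows non-sharp wave-admissible pairs, for which you should apply Bernstein's inequality on each dyadic block to pass from the sharp exponent $\tilde r$ with $\frac{1}{q}+\frac{d-1}{2\tilde r}=\frac{d-1}{4}$ to the larger $r$, the resulting factor being absorbed exactly by the scale-invariant Sobolev index $d(\frac{1}{2}-\frac{1}{r})-\frac{1}{q}$.
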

\begin{prop}\label{propschr}
Let $d\geq 1$, $(q,r)$ be Schr$\ddot{o}$dinger-admissible pair
and $u$, solution to the Schr$\ddot{o}$dinger equation
\begin{equation}
(i\partial_{t}+\Delta_{d})u(t,x)=0,\quad
(t,x)\in\mathbb{R}\times\mathbb{R}^{d},\quad u|_{t=0}=u_{0},
\end{equation}
for $u_{0}\in C^{\infty}(\mathbb{R}^{d})$; then there is a constant $C$ such that
\begin{equation}
\|u\|_{L^{q}_{t}(\mathbb{R},L^{r}(\mathbb{R}^{d}))}\leq C\|u_{0}\|_{L^{2}(\mathbb{R}^{d})}.
\end{equation}
\end{prop}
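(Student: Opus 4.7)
The plan is to deduce Proposition \ref{propschr} by the standard $TT^*$ argument built on the Schr\"odinger dispersive estimate. Writing the propagator as convolution with the explicit kernel $K_t(x)=(4\pi i t)^{-d/2}e^{i|x|^{2}/(4t)}$ produces the two endpoint bounds at once: Plancherel gives the energy identity $\|e^{it\Delta_d}u_0\|_{L^2_x}=\|u_0\|_{L^2_x}$ for every $t$, while $\|K_t\|_{L^\infty_x}=(4\pi|t|)^{-d/2}$ combined with Young's inequality yields $\|e^{it\Delta_d}u_0\|_{L^\infty_x}\leq (4\pi|t|)^{-d/2}\|u_0\|_{L^1_x}$. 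Riesz--Thorin interpolation between these two endpoints gives, for every $r\in[2,\infty]$,
\begin{equation*}
\|e^{it\Delta_d}u_0\|_{L^r_x}\;\lesssim\;|t|^{-d(\frac{1}{2}-\frac{1}{r})}\,\|u_0\|_{L^{r'}_x}.
\end{equation*}

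Next let $T:L^2_x\to L^q_tL^r_x$ be the map $Tu_0(t)=e^{it\Delta_d}u_0$. By the $TT^*$ identity the desired estimate for $T$ is equivalent to
\begin{equation*}
\Bigl\|\int_{\mathbb{R}} e^{i(t-s)\Delta_d}F(s)\,ds\Bigr\|_{L^q_tL^r_x}\;\lesssim\;\|F\|_{L^{q'}_tL^{r'}_x}.
\end{equation*}
Inserting the fixed-time dispersive bound pointwise in $(t,s)$ reduces this to the scalar inequality
\begin{equation*}
\Bigl\|\int_{\mathbb{R}}|t-s|^{-d(\frac{1}{2}-\frac{1}{r})}\,\|F(s)\|_{L^{r'}_x}\,ds\Bigr\|_{L^q_t}\;\lesssim\;\|F\|_{L^{q'}_tL^{r'}_x},
\end{equation*}
which is exactly the Hardy--Littlewood--Sobolev fractional integration inequality on $\mathbb{R}$. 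The scaling identity $\frac{2}{q}=d(\frac{1}{2}-\frac{1}{r})$ built into sharp Schr\"odinger-admissibility makes the fractional exponent precisely the right one, and HLS closes the argument whenever $q>2$.

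The only remaining case, and the true technical obstacle, is the endpoint $(q,r)=(2,\frac{2d}{d-2})$ available when $d\geq 3$, where HLS fails logarithmically. Here I would follow the Keel--Tao scheme: decompose the time variable dyadically into $|t-s|\sim 2^{j}$, prove bilinear estimates of the form $|\langle T_j^{*}T_j F,G\rangle|\lesssim 2^{-j\beta(a,b)}\|F\|_{L^2_tL^{a'}_x}\|G\|_{L^2_tL^{b'}_x}$ by interpolating between the dispersive bound at $(a,b)=(r,r)$ and the trivial energy bound at $(a,b)=(2,2)$, and then recover the diagonal exponent $a=b=r$ and sum over $j$ by a real-interpolation (atomic decomposition) argument. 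This endpoint step is the delicate part; the non-endpoint range and the dispersive input are routine, and the full statement is the content of Keel--Tao \cite{kt98}.
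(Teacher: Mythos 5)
Your argument is correct and is essentially the same approach the paper relies on: the paper does not prove Proposition \ref{propschr} itself but cites Keel--Tao \cite{kt98}, and the $TT^{*}$ plus Hardy--Littlewood--Sobolev scheme you use for the non-endpoint range is exactly the mechanism of Lemma \ref{lem1} in the paper's Appendix. Your deferral of the endpoint $(2,\tfrac{2d}{d-2})$, $d\geq 3$, to the Keel--Tao bilinear/real-interpolation argument is appropriate, since that is precisely the content of the cited reference.
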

Strichartz estimates in the context of the wave and Schr\"{o}dinger equations have a long history, beginning with Strichartz pioneering work \cite{stri77}, where he proved the particular case $q=r$ for the wave and (classical) Schr\"{o}dinger equations. This was later generalized to mixed $L^{q}_{t}L^{r}_{x}$ norms by Ginibre and Velo \cite{give85} for Schr\"{o}dinger equations, where $(q,r)$ is sharp admissible and $q>2$; the wave estimates were obtained independently by Ginibre-Velo \cite{give95} and Lindblad-Sogge \cite{ls95}, following earlier work by Kapitanski \cite{lev90}. The remaining endpoints for both equations  were finally settled by Keel and Tao \cite{kt98}.

For a manifold with smooth, strictly geodesically concave boundary, the Melrose and Taylor parametrix yields the Strichartz estimates for the wave equation with Dirichlet boundary condition (not including the endpoints) as shown in the paper of Smith and Sogge \cite{smso95}. If the concavity assumption is removed, however, the presence of multiply reflecting geodesic and their limits, gliding rays, prevent the construction of a similar parametrix!

In \cite{kosmta}, Koch, Smith and Tataru obtained "log-loss" estimates for the spectral clusters on compact manifolds without boundary.
Recently, Burq, Lebeau and Planchon \cite{bulepl07} established Strichartz type inequalities on a manifold with boundary using the $L^{r}(\Omega)$ estimates for the spectral projectors obtained by Smith and Sogge \cite{smso06}. The range of indices $(q,r)$ that can be obtained in this manner, however, is restricted by the allowed range of $r$ in the squarefunction estimate for the wave equation, which control the norm of $u$ in the space $L^{r}(\Omega,L^{2}(-T,T))$, $T>0$ (see \cite{smso06}). In dimension $3$, for example, this restricts the indices to $q,r \geq 5$. The work of Blair, Smith and Sogge \cite{blsmso08} expands the range of indices $q$ and $r$ obtained in \cite{bulepl07}: specifically, they show that if $\Omega$ is a compact manifold with boundary and $(q,r,\beta)$ is a triple satisfying 
\[
\frac{1}{q}+\frac{d}{r}=\frac{d}{2}-\beta,
\]
together with the restriction 
\[
\left\{
      \begin{array}{ll}
      \frac{3}{q}+\frac{d-1}{r}\leq\frac{d-1}{2},\quad d\leq 4\\
      \frac{1}{q}+\frac{1}{r}\leq\frac{1}{2},\quad d\geq 4,
      \end{array}
      \right.
\]
then the Strichartz estimates \eqref{strichartz} hold true for solutions $u$ to the wave equation \eqref{wavvv} satisfying Dirichlet or Neumann homogeneous boundary conditions, with a constant $C$ depending on $\Omega$ and $T$. 
\begin{rmq}
Notice that Theorem \ref{thm1} states for instance that the scale-invariant  Strichartz estimates fail for $\frac{3}{q}+\frac{1}{r}>\frac{15}{24}$, whereas the result of Blair, Smith and Sogge states that such estimates hold if $\frac{3}{q}+\frac{1}{r}\leq\frac{1}{2}$. Of course, the counterexample places a lower bound on the loss for such indices $(q,r)$, and the work \cite{blsmso08} would place some upper bounds, but this concise statement shows one explicit gap in our knowledge that remains to be filled.
\end{rmq}
A very interesting and natural question would be to determine the sharp range of exponents for the Strichartz estimates in any dimension $d\geq 2$!

A classical way to prove Strichartz inequalities is to use
dispersive estimates (see \eqref{dispersion}). The fact that weakened dispersive estimates can \emph{still} imply optimal (and scale invariant) Strichartz estimates for the solution of the wave equation was first noticed by Lebeau: in \cite{gle06} he proved dispersive estimates with losses (which turned out to be optimal) for the wave equation inside a strictly convex domain from which he deduced Strichartz type estimates without losses but for indices $(q,r)$ satisfying \eqref{admis} with $\alpha=\frac{1}{4}$ in dimension $2$. 

A natural strategy for proving Theorem \ref{thm1} would be to use the Rayleigh whispering gallery modes which accumulate their energy near the boundary,
contributing to large $L^{r}$ norms.
Applying the semi-classical Schr\"odinger evolution shows that a loss
of $\frac{1}{6}(\frac{1}{2}-\frac{1}{r})$ derivatives is necessary
for the Strichartz estimates. However, when dealing with the wave
operator this strategy fails as the gallery modes satisfy the
Strichartz estimates of the free space:
\begin{thm}\label{thm2}
Let $d\geq 2$ and let $\Delta_{d-1}$ denote the Laplace operator in $\mathbb{R}^{d-1}$. Let
\begin{equation}\label{lapla}
\Delta_{D}=\partial^{2}_{x}+(1+x)\Delta_{d-1}, \quad  \text{where} \quad \Delta_{d-1}=\sum_{j=1}^{d-1}\partial^{2}_{y_{j}}.
\end{equation}
Let $\psi\in C^{\infty}_{0}(\mathbb{R}^{d-1}\setminus\{0\})$, $k\geq 1$ and $u_{0}\in E_{k}(\Omega)$, where $E_{k}(\Omega)$ is to be later defined by \eqref{eka}. 
\begin{enumerate}

\item Let $(q,r)$ be a Schr\"odinger-admissible pair in dimension $d$ with $q>2$ and consider the
semi-classical Schrödinger equation with Dirichlet boundary
condition
\begin{equation}\label{schrrr}
(\frac{h}{i}\partial_{t}-h^{2}\Delta_{D})u=0,\quad
u|_{\partial\Omega}=0,\quad u|_{t=0}=\psi(hD_{y})u_{0}.
\end{equation}
Then $u$ satisfies the following Strichartz estimates with a loss,
\begin{equation}\label{boschr}
\|u\|_{L^{q}([0,T_{0}],L^{r}(\Omega))}\lesssim
h^{-(\frac{d}{2}+\frac{1}{6})(\frac{1}{2}-\frac{1}{r})}\|u|_{t=0}\|_{L^{2}(\Omega)}.
\end{equation}
Moreover, the bounds \eqref{boschr} are optimal.

\item Let $(q,r)$ be a wave-admissible pair in dimension $d$ with $q>2$ and consider the wave equation with Dirichlet boundary conditions
\begin{equation}\label{eqond}
(\partial^{2}_{t}-\Delta_{D})u=0,\quad
u|_{\partial\Omega}=0,\quad u|_{t=0}=\psi(hD_{y})u_{0},\quad \partial_{t}u|_{t=0}=0.
\end{equation}
Then the solution $u$ of \eqref{eqond} satisfies
\begin{equation}
\|u\|_{L^{q}([0,T_{0}],L^{r}(\Omega)}\lesssim
h^{-d(\frac{1}{2}-\frac{1}{r})+\frac{1}{q}}\|u|_{t=0}\|_{L^{2}(\Omega)}.
\end{equation}
\end{enumerate}
\end{thm}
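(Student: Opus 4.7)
My plan is to use the explicit Airy spectral decomposition of $-\Delta_D$, then reduce the $L^q_tL^r$ norm to a tangential Strichartz estimate in $\mathbb{R}^{d-1}$ via the concentration of the Airy profile, and combine these with the free-space Strichartz theory of Keel--Tao. Taking the Fourier transform in $y$ and rescaling $z=|\eta|^{2/3}x$, the Dirichlet eigenvalue problem $-\Delta_D e = \mu^2 e$, $e|_{x=0}=0$ reduces to Airy's equation on $\mathbb{R}_+$, whose Dirichlet eigenvalues are $\{-\omega_k\}_{k\geq 1}$ (the negatives of the zeros of $\mathrm{Ai}$). This yields the gallery modes
\[
e_{k,\eta}(x,y) = c_k|\eta|^{1/3}\mathrm{Ai}(|\eta|^{2/3}x-\omega_k)e^{iy\cdot\eta},\qquad \mu_k(\eta)^2 = |\eta|^2+\omega_k|\eta|^{4/3}.
\]
The space $E_k(\Omega)$ is the closed span of the $e_{k,\eta}$, so $u_0 = \psi(hD_y)u_0\in E_k$ decomposes as $u_0 = \int\hat f(\eta)e_{k,\eta}\,d\eta$ with $\hat f$ supported in $|\eta|\sim 1/h$ and $\|f\|_{L^2(\mathbb{R}^{d-1})}\sim \|u_0\|_{L^2(\Omega)}$; the solutions of \eqref{schrrr} and \eqref{eqond} are $u(t,x,y) = \int\hat f(\eta)e_{k,\eta}(x,y)e^{it\Phi(\eta)}\,d\eta$ with $\Phi=-h\mu_k^2$ (Schrödinger) or $\Phi=\pm\mu_k$ (wave).

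The key reduction exploits the fact that $\mathrm{Ai}(|\eta|^{2/3}x-\omega_k)$ is concentrated in a strip of thickness $\sim h^{2/3}$ near the boundary. For each fixed $x$, $u(\cdot,x,\cdot)$ is a tangential propagation $e^{it\Phi(D_y)}g_x$ with effective datum $g_x(y) = \int\hat f(\eta)\,c_k|\eta|^{1/3}\mathrm{Ai}(|\eta|^{2/3}x-\omega_k)e^{iy\cdot\eta}d\eta$; a direct computation using Plancherel in $\eta$ and the $|\eta|^{2/3}$-scaling of the Airy profile gives the concentration bound
\[
\Bigl(\int_0^\infty \|g_x\|_{L^2_y}^r\,dx\Bigr)^{1/r}\lesssim h^{-\tfrac{2}{3}(\tfrac{1}{2}-\tfrac{1}{r})}\|u_0\|_{L^2(\Omega)}.
\]
On the shell $|\eta|\sim 1/h$, $\Phi$ is a smooth perturbation of the free $(d-1)$-dimensional Schrödinger (resp.\ wave) symbol by a subprincipal term of size $h^{-1/3}$ in the phase; stationary phase produces the standard $(d-1)$-dim dispersive kernel. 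Applying Keel--Tao at the sharp $(d-1)$-admissible pair $(\bar q,r)$ and then Hölder in time on $[0,T_0]$ to pass to the prescribed $d$-admissible $(q,r)$ (for which $q\leq r$ in the regime of interest) gives $\|e^{it\Phi(D_y)}g_x\|_{L^q_tL^r_y}\lesssim h^{-\sigma}\|g_x\|_{L^2_y}$, and combining via Minkowski yields $\|u\|_{L^q_tL^r(\Omega)}\lesssim h^{-\sigma-\tfrac{2}{3}(\tfrac{1}{2}-\tfrac{1}{r})}\|u_0\|_{L^2(\Omega)}$.

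For the Schrödinger case, the tangential loss is $\sigma=\tfrac{d-1}{2}(\tfrac12-\tfrac1r)$, which combines with the concentration exponent to produce exactly $h^{-(d/2+1/6)(1/2-1/r)}$, proving \eqref{boschr}. The sharpness follows by choosing $\hat f$ a smooth bump of width $\sim h^{1/2}$ about a fixed $\eta_0$ with $|\eta_0|\sim 1/h$ and verifying that the Airy concentration and the Schrödinger tangential dispersion saturate simultaneously at $t=O(1)$. In the wave case, the analogous combination yields $h^{-(d/2+2/3)(1/2-1/r)}$, which exceeds the claimed free $d$-dim rate $h^{-((d+1)/2)(1/2-1/r)}$ by a factor $h^{-(1/6)(1/2-1/r)}$; closing this gap requires exploiting the subprincipal correction $\omega_k|\eta|^{4/3}$ in $\mu_k$, whose Hessian provides an additional stationary-phase gain at scale $h^{-1/3}$. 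This refined analysis of the composite tangential-plus-normal kernel for the wave propagator is the principal technical obstacle of the proof; the Schrödinger part, by contrast, falls out cleanly from the $(d-1)$-dim free Strichartz combined with the Airy concentration.
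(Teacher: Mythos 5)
Your strategy coincides with the paper's: decompose along the Airy gallery modes, peel off the normal variable using the $h^{2/3}$-concentration of $Ai(|\eta|^{2/3}x-\omega_{k})$ (your Minkowski argument in $x$ is a repackaging of Lemma \ref{lem2}), and reduce to Strichartz estimates for the tangential Fourier multipliers $G_{s}(\xi)=|\xi|^{2}+\omega_{k}h^{2/3}|\xi|^{4/3}$ and $G_{w}(\xi)=\sqrt{|\xi|^{2}+\omega_{k}h^{2/3}|\xi|^{4/3}}$ on $\mathbb{R}^{d-1}$. Part (1) is essentially complete: the perturbed Schr\"odinger symbol has Hessian $2\,Id+O(h^{2/3})$, so the free $(d-1)$-dimensional dispersive decay $(t/h)^{-(d-1)/2}$ survives, and the exponents combine to $h^{-(\frac{d}{2}+\frac{1}{6})(\frac{1}{2}-\frac{1}{r})}$ exactly as you compute; your wave-packet datum for sharpness is the same coherent-state construction the paper realizes via a complex-phase WKB solution. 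One caveat: for the lower bound you need the normal concentration step in both directions (both inequalities of Lemma \ref{lem2}), whereas you only state it as an upper bound.

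The wave case, however, is left with a genuine gap, and you say so yourself: you observe that the naive combination (free $(d-1)$-dimensional wave decay $(t/h)^{-(d-2)/2}$ plus Airy concentration) overshoots the claimed bound by $h^{-\frac{1}{6}(\frac{1}{2}-\frac{1}{r})}$, you correctly identify that the deficit must be recovered from the radial curvature of the subprincipal term $\omega_{k}h^{2/3}|\xi|^{4/3}$, and you then declare this ``the principal technical obstacle'' without carrying it out. That obstacle is the entire content of part (2). What must be proved is the sharpened dispersive estimate $\sup_{z}|\int e^{i\frac{t}{h}(z\xi-G_{w}(\xi))}\psi(\xi)d\xi|\lesssim h^{-1/3}(t/h)^{-(d-1)/2}$ (Proposition \ref{propgama}): after the angular stationary phase gives the usual $(t/h)^{-(d-2)/2}$, one localizes $|z|$ to an $h^{2/3}$-neighborhood of the unit sphere, rescales $|z|-1=h^{2/3}x$, and performs a one-dimensional stationary phase in the radial variable with large parameter $\mu=th^{-1/3}$ and phase $\rho x-\frac{\omega}{2}\rho^{1/3}$, treating separately the regimes $\mu\lesssim 1$ and $\mu\gg 1$; this produces the extra factor $\mu^{-1/2}=h^{-1/3}(t/h)^{-1/2}$, and only then does Lemma \ref{lem1} with $\alpha=\frac{d-1}{2}$ return the free-space exponent $h^{-d(\frac{1}{2}-\frac{1}{r})+\frac{1}{q}}$. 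Until that computation is supplied, part (2) of the theorem is asserted rather than proved.
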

\begin{rmq}
We prove Theorem \ref{thm2} for the model case of the half-space 
\[
\Omega=\{x>0,\quad y\in\mathbb{R}^{d-1}\} 
\]
where the Laplacian $\Delta_{D}$ was defined by \eqref{lapla}.
It is very likely that,
using the parametrix introduced by Eskin \cite{esk77}, we could
obtain the same result for general operators.

Notice that if the initial data $u_{0}$ belongs to $E_{k}(\Omega)$ for some $k\geq 1$ then the solution $u(t,x,y)$ to \eqref{schrrr} localized in frequency at the level $1/h$ is given by
\[
u(t,x,y)=\frac{1}{(2\pi h)^{d-1}}\int e^{\frac{iy\eta}{h}}\hat{u}(t,x,\eta/h)d\eta,
\]
therefor
\[
\hat{u}(t,x,\eta/h)=e^{ith\lambda_{k}(\eta/h)}\psi(\eta)\hat{u}_{0}(x,\eta/h),
\]
where $\lambda_{k}(\eta)=|\eta|^{2}+\omega_{k}|\eta|^{4/3}$ and $\hat{u}_{0}(x,\eta/h)=Ai(|\eta|^{2/3}x/h^{2/3}-\omega_{k})$ is the eigenfunction of $-\Delta_{D,\eta}=-\partial^{2}_{x}+(1+x)\eta^{2}$ corresponding to the eigenvalue $\lambda_{k}$.
\end{rmq}
Theorem \ref{thm2} shows that the method we used for the Schr\"{o}dinger equation cannot yield Theorem \ref{thm1}. We
will proceed in a different manner, using co-normal waves with
multiply reflected cusps at the boundary (see Figure \ref{fig}). 

The paper is organized as follows: in Section \ref{galler} we will use gallery modes in order to prove Theorem \ref{thm2}; in
Section \ref{cuspd2} we prove Theorem \ref{thm1}. Finally, the Appendix collects several useful results.

\section*{Acknowledgements} The author would like to thank Gilles Lebeau who gave the initial idea of this work and guided her from idea to achievement and Nicolas Burq for helpful conversations. The author is also indebted to the referees for their remarks.
The author was supported by the A.N.R. grant 07-BLAN-0250.

\section{Whispering gallery modes}\label{galler}
\subsection{Strichartz inequalities}
Let $n\geq 2$, $0<T_{0}<\infty$, $\psi(\xi)\in
C^{\infty}_{0}(\mathbb{R}^{n}\setminus\{0\})$ and let
$G:\mathbb{R}^{n}\rightarrow\mathbb{R}$ be a smooth function $G\in C^{\infty}$ near
the support of $\psi$. Let  $u_{0}\in L^{2}(\mathbb{R}^{n})$ and $h\in (0,1]$ and consider the following
semi-classical problem
\begin{equation}\label{pb}
ih\partial_{t}u-G(\frac{h}{i}D)u=0,\quad
u|_{t=0}=\psi(hD)u_{0}.
\end{equation}
If we denote by $e^{-\frac{it}{h}G}$ the linear flow, the
solution of \eqref{pb} writes
\begin{equation}\label{solut}
e^{-\frac{it}{h}G}\psi(hD)u_{0}(x)=\frac{1}{(2\pi
h)^{n}}\int
e^{\frac{i}{h}(<x,\xi>-tG(\xi))}\psi(\xi)\hat{u_{0}}(\frac{\xi}{h})d\xi.
\end{equation}
Let $q\in(2,\infty]$, $r\in[2,\infty]$ and set
\begin{equation}\label{bet}
\frac{1}{q}=\alpha(\frac{1}{2}-\frac{1}{r}),\quad
\beta=(n-\alpha)(\frac{1}{2}-\frac{1}{r}).
\end{equation}
\begin{rmq}
Notice that the pair $(q,r)$ is Schr$\ddot{o}$dinger-admissible in dimension $n$ if $\alpha=\frac{n}{2}$ and wave admissible if $\alpha=\frac{n-1}{2}$.
\end{rmq}
With the notations in \eqref{bet} the Strichartz inequalities for \eqref{pb} read as follows
\begin{equation}\label{strichartz}
h^{\beta}\|e^{-\frac{it}{h}G}\psi(hD)u_{0}\|_{L^{q}((0,T_{0}],L^{r}(\mathbb{R}^{n}))}\leq C
\|\psi(hD)u_{0}\|_{L^{2}(\mathbb{R}^{n})}.
\end{equation}
The classical way to prove \eqref{strichartz} is to use dispersive
inequalities which read as follows
\begin{equation}\label{dispersion}
\|e^{-\frac{it}{h}G}\psi(hD)u_{0}\|_{L^{\infty}(\mathbb{R}^{n})}\lesssim (2\pi
h)^{-n}\gamma_{n,h}(\frac{t}{h})\|\psi(hD)u_{0}\|_{L^{1}(\mathbb{R}^{n})}
\end{equation}
for $t\in [0,T_{0}]$, where we set
\begin{equation}\label{gama}
\gamma_{n,h}(\lambda)=\sup_{z\in\mathbb{R}^{n}}|\int
e^{i\lambda(z\xi-G(\xi))}\psi(\xi)d\xi|.
\end{equation}
In Section \ref{ttstar} of the Appendix we prove the following:
\begin{lemma}\label{lem1}
Let $\alpha\geq 0$ and $(q,r)$ be an $\alpha$-admissible pair in dimension $n$ with $q>2$. Let $\beta$ be given by \eqref{bet}. If the solution $e^{-\frac{it}{h}G}(\psi(hD)u_{0})$ of \eqref{pb} satisfies the dispersive estimates \eqref{dispersion} for some function
$\gamma_{n,h}:\mathbb{R}\rightarrow\mathbb{R_{+}}$, then there exists some $C>0$ independent of $h$ such that the following
inequality holds
\begin{equation}\label{estim}
h^{\beta}\|e^{-\frac{it}{h}G}\psi(hD)u_{0}\|_{L^{q}((0,T_{0}],L^{r}(\mathbb{R}^{n}))}\leq C\Big(\sup_{s\in
(0,\frac{T_{0}}{h})}
s^{\alpha}\gamma_{n,h}(s))\Big)^{\frac{1}{2}-\frac{1}{r}}\|u_{0}\|_{L^{2}(\mathbb{R}^{n})}.
\end{equation}
\end{lemma}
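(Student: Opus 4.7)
The plan is to follow the classical $TT^*$ argument of Ginibre--Velo and Keel--Tao, which turns dispersive bounds into Strichartz inequalities. Set $T_h u_0 = e^{-it G(hD)/h}\psi(hD)u_0$, so that the goal is $\|T_h u_0\|_{L^q_t L^r_x}\lesssim h^{-\beta}(\sup_{s} s^\alpha\gamma_{n,h}(s))^{1/2-1/r}\|u_0\|_{L^2}$. By the standard $TT^*$ identity,
\[
\|T_h\|_{L^2\to L^q_tL^r_x}^2=\|T_h T_h^*\|_{L^{q'}_tL^{r'}_x\to L^q_tL^r_x},
\]
so it is enough to estimate the kernel
\[
T_hT_h^*F(t,\cdot)=\int_0^{T_0} e^{-i(t-s)G(hD)/h}\psi(hD)^2 F(s,\cdot)\,ds.
\]

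First, I would combine the trivial $L^2$-conservation $\|T_h(t)u_0\|_{L^2}\lesssim\|u_0\|_{L^2}$ with the dispersive assumption \eqref{dispersion} via Riesz--Thorin interpolation. This yields, for every $t,s\in(0,T_0]$, the fixed-time bound
\[
\|e^{-i(t-s)G(hD)/h}\psi(hD)^2 F(s,\cdot)\|_{L^r_x}\lesssim \bigl(h^{-n}\gamma_{n,h}(|t-s|/h)\bigr)^{1-2/r}\|F(s,\cdot)\|_{L^{r'}_x},
\]
with implicit constant independent of $h$. Using the hypothesis $\gamma_{n,h}(\sigma)\le (\sup_s s^\alpha\gamma_{n,h}(s))\,\sigma^{-\alpha}$ for $\sigma\in(0,T_0/h)$, this majorant becomes a constant multiple of
\[
h^{-n(1-2/r)+\alpha(1-2/r)}\bigl(\sup_s s^\alpha\gamma_{n,h}(s)\bigr)^{1-2/r}|t-s|^{-\alpha(1-2/r)}\|F(s,\cdot)\|_{L^{r'}_x}.
\]
The admissibility relation \eqref{bet} is precisely $\alpha(1-2/r)=2/q$ and $(n-\alpha)(1-2/r)=2\beta$, so the prefactor is $h^{-2\beta}$ and the singular kernel in time is $|t-s|^{-2/q}$.

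Next, I would apply the Hardy--Littlewood--Sobolev inequality in the time variable to the convolution with $|t-s|^{-2/q}$. Since $0<2/q<1$ (here we use $q>2$), HLS gives the bound $L^{q'}_t\to L^q_t$ with exponents satisfying $\tfrac{1}{q}=\tfrac{1}{q'}+\tfrac{2}{q}-1$, which is exactly an identity. Combining these two steps,
\[
\|T_hT_h^* F\|_{L^q_tL^r_x}\lesssim h^{-2\beta}\bigl(\sup_{s\in(0,T_0/h)} s^\alpha\gamma_{n,h}(s)\bigr)^{1-2/r}\|F\|_{L^{q'}_tL^{r'}_x}.
\]
Taking the square root through the $TT^*$ identity produces the desired estimate \eqref{estim}.

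The main subtlety is purely the scalar HLS step: one must ensure that the power $2/q$ lies strictly in $(0,1)$, which is exactly the non-endpoint condition $q>2$ appearing in the hypothesis, so the endpoint case $q=2$ falls outside the scope of this argument and would require the more delicate bilinear interpolation of Keel--Tao. A second, very minor, technical point is that $\gamma_{n,h}$ is only controlled on $(0,T_0/h)$; this is harmless because the relevant time differences $(t-s)/h$ for $t,s\in(0,T_0]$ lie in $(-T_0/h,T_0/h)$, so the hypothesis applies wherever it is needed.
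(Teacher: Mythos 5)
Your argument is correct and follows precisely the same route as the paper's proof: the $TT^*$ reduction, Riesz--Thorin interpolation of the dispersive bound against $L^2$ conservation to get the $L^{r'}\to L^r$ kernel estimate, extraction of the factor $(\sup_s s^\alpha\gamma_{n,h}(s))^{1-2/r}$ together with the $|t-s|^{-2/q}$ singularity via the scaling relations $\alpha(1-2/r)=2/q$ and $(n-\alpha)(1-2/r)=2\beta$, and finally Hardy--Littlewood--Sobolev in time using $q>2$. The only difference is cosmetic ordering of when the $\gamma_{n,h}\le \sigma^{-\alpha}\sup_s s^\alpha\gamma_{n,h}(s)$ bound is inserted, so there is nothing to add.
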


\subsection{Gallery modes}
Let $\Omega=\{(x,y)\in\mathbb{R}^{d}| x>0,y\in\mathbb{R}^{d-1}\}$ denote the half-space $\mathbb{R}^{d}_{+}$ with the Laplacian given by \eqref{lapla} with Dirichlet boundary
condition on $\partial\Omega$. Taking the Fourier transform in the $y$-variable gives
\begin{equation}
-\Delta_{D,\eta}=-\partial^{2}_{x}+(1+x)|\eta|^{2}.
\end{equation} 
For $\eta\neq 0$, $-\Delta_{D,\eta}$ is a self-adjoint,
positive operator on $L^{2}(\mathbb{R}_{+})$ with compact resolvent. Indeed, the potential $V(x,\eta)=(1+x)\eta^{2}$ is bounded from below, it is continuous and $\lim_{x\rightarrow\infty}V(x,\eta)=\infty$. Thus one can consider the form associated to $-\partial^{2}_{x}+V(x,\eta)$,
\[
Q(u)=\int_{x>0}|\partial_{x} v|^{2}+V(x,\eta)|v|^{2}dx,\quad D(Q)=H^{1}_{0}(\mathbb{R}_{+})\cap\{v\in L^{2}(\mathbb{R}_{+}), (1+x)^{1/2}v\in L^{2}(\mathbb{R}_{+}))\},
\]
which is clearly symmetric, closed and bounded from below. If $c\gg 1$ is chosen such that $-\Delta_{D,\eta}+c$ is invertible, then $(-\Delta_{D,\eta}+c)^{-1}$ sends $L^{2}(\mathbb{R}_{+})$ in $D(Q)$ and we deduce that $(-\Delta_{D,\eta}+c)^{-1}$ is also a (self-adjoint) compact operator. The last assertion follows from the compact inclusion
\[
D(Q)=\{v| \partial_{x}v, (1+x)^{1/2}v\in L^{2}(\mathbb{R}_{+}), v(0)=0\}\hookrightarrow L^{2}(\mathbb{R}_{+}).
\]
We deduce that there exists a base of eigenfunctions $v_{k}$ of $-\Delta_{D,\eta}$ associated to a sequence of eigenvalues $\lambda_{k}(\eta)\rightarrow\infty$. From $-\Delta_{D,\eta}v=\lambda v$ we obtain $\partial^{2}_{x}v=(\eta^{2}-\lambda+x\eta^{2})v$, $v(0,\eta)=0$ and after a change of variables we find the
eigenfunctions 
\begin{equation}
v_{k}(x,\eta)=Ai(|\eta|^{\frac{2}{3}}x-\omega_{k}),
\end{equation} 
where $(-\omega_{k})_{k}$ are the zeros of Airy's
function in decreasing order. 
The corresponding  eigenvalues are $\lambda_{k}(\eta)=|\eta|^{2}+\omega_{k}|\eta|^{\frac{4}{3}}$.
\begin{dfn}
For $x>0$ let $E_{k}(\Omega)$ be the closure in $L^{2}(\Omega)$ of
\begin{equation}\label{eka}
\{u(x,y)=\frac{1}{(2\pi)^{d-1}}\int e^{iy\eta}
Ai(|\eta|^{\frac{2}{3}}x-\omega_{k})\hat{\varphi}(\eta)d\eta, \varphi\in \mathcal{S}(\mathbb{R}^{d-1})\},
\end{equation}
where $\mathcal{S}(\mathbb{R}^{d-1})$ is the Schwartz space of rapidly decreasing functions,
\[\mathcal{S}(\mathbb{R}^{d-1})=\{f\in C^{\infty}(\mathbb{R}^{d-1})| \|z^{\alpha} D^{\beta}f\|_{L^{\infty}(\mathbb{R}^{d-1})}<\infty\quad\forall\alpha,\beta\in \mathbb{N}^{d-1}\}.
\]  
For $k$ fixed, a function $u\in E_{k}(\Omega)$ is called whispering
gallery mode. Moreover, a function $u\in E_{k}(\Omega)$ satisfies
\begin{equation}\label{one}
(\partial^{2}_{x}+x\Delta_{d-1}-\omega_{k}|\Delta_{d-1}|^{\frac{2}{3}})u=0.
\end{equation}
\end{dfn}
\begin{rmq}
We have the decomposition 
\[
L^{2}(\Omega)=\bigoplus_{\bot}E_{k}(\Omega).
\] 
Indeed, from the discussion above one can easily see that $(E_{k}(\Omega))_{k}$ are closed, orthogonal and that $\cup_{k}E_{k}(\Omega)$ is a total family (i.e. that the vector space spanned by $\cup_{k}E_{k}(\Omega)$ is dense in $L^{2}(\Omega)$).
\end{rmq}
In Section \ref{secairy} of the Appendix we prove the following:
\begin{lemma}\label{lem2}
Let $\psi$, $\psi_{1}$, $\psi_{2}\in C^{\infty}_{0}(\mathbb{R}^{d-1}\setminus\{0\})$ be
such that $\psi_{1}\psi=\psi_{1}$ and $\psi\psi_{2}=\psi$ and let $\varphi\in C^{\infty}(\mathbb{R}^{d-1})$.  Fix $k\geq 1$ and let
$u\in E_{k}(\Omega)$ be the function associated to $\varphi$ in $E_{k}(\Omega)$. For
$r\in[1,\infty]$ there exist $C_{1}$, $C_{2}>0$ such that
\begin{equation}
C_{1}\|\psi_{1}(hD_{y})\varphi\|_{L^{r}(\mathbb{R}^{d-1})}\leq
h^{-\frac{2}{3r}}\|\psi(hD_{y})u\|_{L^{r}(\mathbb{R}_{+}\times\mathbb{R}^{d-1})}\leq C_{2}\|\psi_{2}(hD_{y})\varphi\|_{L^{r}(\mathbb{R}^{d-1})}.
\end{equation}
\end{lemma}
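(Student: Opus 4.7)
The plan is to work on the Fourier side in $y$. Writing
\[
\psi(hD_{y})u(x,y)=\frac{1}{(2\pi)^{d-1}}\int e^{iy\eta}\,\psi(h\eta)\,Ai(|\eta|^{2/3}x-\omega_{k})\,\hat\varphi(\eta)\,d\eta,
\]
the map $\varphi\mapsto\psi(hD_{y})u(x,\cdot)$ is, for each fixed $x>0$, convolution in $y$ with a kernel $K_{x}$ whose $L^{1}_{y}$-norm will be the only nontrivial input. I would use the change of variables $\eta=\xi/h$, $x=h^{2/3}X$, $y=hz$ to strip every $h$ from the symbol; this yields $\|K_{x}\|_{L^{1}_{y}}=M(x/h^{2/3})$ where
\[
M(X)=\Bigl\|\tfrac{1}{(2\pi)^{d-1}}\int e^{iz\xi}\psi(\xi)\,Ai(|\xi|^{2/3}X-\omega_{k})\,d\xi\Bigr\|_{L^{1}(dz)}
\]
is $h$-independent.

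For the upper bound, $\psi\psi_{2}=\psi$ gives $\psi(hD_{y})u(x,\cdot)=K_{x}\ast\psi_{2}(hD_{y})\varphi$, so Young's inequality yields $\|\psi(hD_{y})u(x,\cdot)\|_{L^{r}_{y}}\leq M(x/h^{2/3})\|\psi_{2}(hD_{y})\varphi\|_{L^{r}_{y}}$, and rescaling $x=h^{2/3}X$ in the $L^{r}$-norm in $x$ produces the factor $h^{2/(3r)}$. For the lower bound I construct an explicit left inverse using the orthogonality $\int_{0}^{\infty}|Ai(|\eta|^{2/3}x-\omega_{k})|^{2}dx=c_{k}|\eta|^{-2/3}$ with $c_{k}=\int_{-\omega_{k}}^{\infty}|Ai(t)|^{2}dt$; setting
\[
Qf(y)=\tfrac{1}{(2\pi)^{d-1}c_{k}}\int e^{iy\eta}\psi_{1}(h\eta)|\eta|^{2/3}\int_{0}^{\infty}Ai(|\eta|^{2/3}x-\omega_{k})\hat f(x,\eta)\,dx\,d\eta,
\]
a direct Fourier computation together with $\psi_{1}\psi=\psi_{1}$ gives $Q(\psi(hD_{y})u)=\psi_{1}(hD_{y})\varphi$. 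Expressing $Q$ as a superposition over $x$ of convolutions in $y$ with kernels $J_{x}$, the same rescaling gives $\|J_{x}\|_{L^{1}_{y}}=h^{-2/3}\tilde M(x/h^{2/3})$ for an analogous $h$-independent $\tilde M$, and Minkowski in $y$, Young in $y$, and H\"older in $x$ (pairing $r$ and $r'$) yield a total $h$-exponent of $-\tfrac{2}{3}+\tfrac{2}{3r'}=-\tfrac{2}{3r}$ via $\tfrac{1}{r}+\tfrac{1}{r'}=1$.

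The only real work lies in showing that $M$ and $\tilde M$ belong to the relevant $L^{p}(dX)$ spaces. Since $\psi,\psi_{1}$ are compactly supported in an annulus away from $0$, the integrand in $\xi$ is smooth and compactly supported, so classical integration by parts controls $M(X)$ and $\tilde M(X)$ by a finite sum of $L^{\infty}_{\xi}$-norms of $\xi$-derivatives of the symbol. Each such derivative introduces a polynomial factor in $X$ but leaves the Airy function evaluated at $|\xi|^{2/3}X-\omega_{k}\gtrsim X$ for $X$ large; the asymptotic $Ai(z)=O(z^{-1/4}e^{-\frac{2}{3}z^{3/2}})$ at $+\infty$ then gives super-polynomial decay as $X\to\infty$, while for $X\lesssim 1$ the quantities are uniformly bounded by smoothness. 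This Airy decay estimate is the main (but essentially routine) technical ingredient; the rest is bookkeeping of scalings, with the endpoints $r=1,\infty$ handled by replacing H\"older with $\sup$-bounds.
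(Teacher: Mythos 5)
Your proof is correct, and while your upper bound follows essentially the same route as the paper (rescale $x=h^{2/3}X$, view $\varphi\mapsto\psi(hD_y)u(x,\cdot)$ as convolution with a kernel whose $L^1_y$-norm is an $h$-independent function of $X$ with rapid decay coming from the Airy asymptotics at $+\infty$), your lower bound is genuinely different. The paper restricts attention to a fixed slab $\zeta\in[\zeta_0,\zeta_1]$ of the rescaled variable, chosen via the location of the Airy zeros (all zeros lie at $-\omega_j\le-\omega_0$, so on that slab the argument $\zeta|\eta|^{2/3}-\omega_k$ stays in an interval where $Ai$ is positive and bounded above and below), and then inverts the multiplier on that slab alone; this is pointwise and elementary, and it shows the slightly stronger fact that the trace of $u$ on a single $O(h^{2/3})$-thick layer already controls $\|\psi_1(hD_y)\varphi\|_{L^r}$. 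You instead build a global left inverse $Q$ by pairing against the Airy eigenfunction in $x$, using the $L^2_x$ normalization $\int_0^\infty Ai(|\eta|^{2/3}x-\omega_k)^2dx=c_k|\eta|^{-2/3}$, and then run Minkowski--Young in $y$ and H\"older in $x$ (the exponent bookkeeping $-\tfrac23+\tfrac{2}{3r'}=-\tfrac{2}{3r}$ checks out, including the endpoints). Your version needs no information about where $Ai$ vanishes and would apply verbatim to any $L^2$-normalized eigenfunction profile with sufficient decay, at the price of requiring the quantitative decay of $Ai$ and its $\xi$-derivatives at $+\infty$ to place $\tilde M$ in $L^{r'}(dX)$; it is also arguably more robust than the paper's statement of the upper and lower constants, which for $r\neq 2$ implicitly require exactly the kernel $L^1$-bounds you make explicit.
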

As a consequence of Lemma \ref{lem2} we have
\begin{cor}\label{rema}
Let  $\varphi_{0}\in\mathcal{S}(\mathbb{R}^{d-1})$, $k\geq 1$ and $u_{0}\in E_{k}(\Omega)$ be such that
\begin{equation}\label{uzero}
u_{0}(x,y)=\frac{1}{(2\pi)^{d-1}}\int e^{iy\eta}
Ai(|\eta|^{\frac{2}{3}}x-\omega_{k})\hat{\varphi_{0}}(\eta)d\eta.
\end{equation}
In order to prove Theorem \ref{thm2} we shall reduce the problem to the study of Strichartz type estimates for a problem with initial data $\varphi_{0}$. More precisely, from Lemma \ref{lem2} we immediately deduce the following
\begin{enumerate}
\item if $u$ solves \eqref{schrrr} with initial data $\psi(hD_{y})u_{0}$, where $u_{0}$ is given by \eqref{uzero} then in order to prove that one can't  do better than \eqref{boschr} it is enough to establish that the solution $\varphi$ to
\begin{equation}\label{equivs}
\frac{h}{i}\partial_{t}\varphi-h^{2}(\Delta_{d-1}-\omega_{k}|\Delta_{d-1}|^{\frac{2}{3}})\varphi=0,\quad
\varphi|_{t=0}=\psi(hD_{y})\varphi_{0}.
\end{equation}
satisfies the following Strichartz type estimates
\begin{equation}\label{equivals}
\|\varphi\|_{L^{q}([0,T_{0}],L^{r}(\mathbb{R}^{d-1}))}\leq c h^{-\frac{(d-1)}{2}
(\frac{1}{2}-\frac{1}{r})}\|\psi(hD_{y})\varphi_{0}\|_{L^{2}(\mathbb{R}^{d-1})}.
\end{equation}
\item if $u$ solves \eqref{eqond} with initial data $(\psi(hD_{y})u_{0},0)$ then in order to show that the gallery modes give rise to the same Strichartz estimates as in the free case it is sufficient to prove that the solution to
\begin{equation}\label{equivw}
\partial^{2}_{t}\varphi-(\Delta_{d-1}-\omega_{k}|\Delta_{d-1}|^{\frac{2}{3}})\varphi=0,\quad
\varphi|_{t=0}=\psi(hD_{y})\varphi_{0},\quad \partial_{t}\varphi|_{t=0}=0
\end{equation}
satisfies
\begin{equation}\label{equivalw}
\|\varphi\|_{L^{q}([0,T_{0}],L^{r}(\mathbb{R}^{d-1}))}\leq c h^{-(\frac{d}{2}-\frac{1}{6})
(\frac{1}{2}-\frac{1}{r})}\|\psi(hD_{y})\varphi_{0}\|_{L^{2}(\mathbb{R}^{d-1})}.
\end{equation}
\end{enumerate}
\end{cor}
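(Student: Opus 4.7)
The plan is to diagonalize both evolutions on $E_k(\Omega)$ via the partial Fourier transform in $y$, to note that the Airy profile in $x$ is preserved at each time, and then to apply Lemma \ref{lem2} fiberwise in $t$ so as to convert $(d-1)$-dimensional estimates into estimates on $\Omega$. First I compute the solution explicitly: since $v_k(x,\eta)=Ai(|\eta|^{2/3}x-\omega_k)$ satisfies $-\Delta_{D,\eta}v_k=\lambda_k(\eta)v_k$ with $\lambda_k(\eta)=|\eta|^2+\omega_k|\eta|^{4/3}$, expanding $\psi(hD_y)u_0$ as in \eqref{uzero} and commuting $-\Delta_D$ with the partial Fourier transform in $y$ give, for \eqref{schrrr},
\begin{equation*}
u(t,x,y)=\frac{1}{(2\pi)^{d-1}}\int e^{iy\eta}\,e^{-ith\lambda_k(\eta)}\,\psi(h\eta)\,v_k(x,\eta)\,\widehat{\varphi_0}(\eta)\,d\eta,
\end{equation*}
and the analogous formula with $\cos(t\sqrt{\lambda_k(\eta)})$ in place of $e^{-ith\lambda_k(\eta)}$ holds for \eqref{eqond} (since $\partial_t u|_{t=0}=0$). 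In both cases, $u(t,\cdot)$ remains in $E_k(\Omega)$ throughout the evolution: only its coefficient in the gallery basis is moved.

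Next I identify the reduced dynamics. The operator $\Delta_{d-1}-\omega_k|\Delta_{d-1}|^{2/3}$ has Fourier symbol $-\lambda_k(\eta)$, so the unique solution $\varphi$ of \eqref{equivs} (resp.\ \eqref{equivw}) has Fourier transform $e^{-ith\lambda_k(\eta)}\psi(h\eta)\widehat{\varphi_0}(\eta)$ (resp.\ $\cos(t\sqrt{\lambda_k(\eta)})\psi(h\eta)\widehat{\varphi_0}(\eta)$). Hence at each fixed $t$, $u(t,\cdot)$ is precisely the gallery mode in $E_k(\Omega)$ associated to $\varphi(t,\cdot)$; Lemma \ref{lem2} applied fiberwise (with auxiliary cutoffs $\psi_1,\psi_2$ satisfying $\psi_1\psi=\psi_1$ and $\psi\psi_2=\psi$) then yields
\begin{equation*}
\|\psi(hD_y)u(t)\|_{L^r(\Omega)}\lesssim h^{2/(3r)}\,\|\psi_2(hD_y)\varphi(t)\|_{L^r(\mathbb{R}^{d-1})}.
\end{equation*}

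The remaining step is exponent bookkeeping. Taking the $L^q$ norm in $t$, invoking the hypothesis \eqref{equivals} (resp.\ \eqref{equivalw}) on $\varphi$, and applying Lemma \ref{lem2} once more at $r=2$ on the initial data --- which costs an extra factor $h^{-1/3}$ --- yields in the Schr\"odinger case the exponent $\tfrac{2}{3r}-\tfrac{d-1}{2}\bigl(\tfrac{1}{2}-\tfrac{1}{r}\bigr)-\tfrac{1}{3}=-\bigl(\tfrac{d}{2}+\tfrac{1}{6}\bigr)\bigl(\tfrac{1}{2}-\tfrac{1}{r}\bigr)$, which matches \eqref{boschr}, and in the wave case $-\tfrac{d+1}{2}\bigl(\tfrac{1}{2}-\tfrac{1}{r}\bigr)=-d\bigl(\tfrac{1}{2}-\tfrac{1}{r}\bigr)+\tfrac{1}{q}$ along the sharp wave-admissible line. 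I do not anticipate any real obstacle inside the corollary itself: once the invariance of $E_k(\Omega)$ under the flow is observed, everything collapses to Lemma \ref{lem2} plus arithmetic. The substantive analytic content --- namely the two-sided $h^{2/(3r)}$ comparison, which reflects the sharp concentration of $v_k$ on $\{x\sim h^{2/3}\}$ --- has already been absorbed into Lemma \ref{lem2}.
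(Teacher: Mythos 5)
Your proposal is correct and takes essentially the same route as the paper: diagonalize by partial Fourier transform in $y$, observe that the flow preserves $E_k(\Omega)$ and simply rotates the Fourier coefficient of the associated $\varphi$, apply the two-sided comparison of Lemma \ref{lem2} at each fixed $t$ for the $L^r(\Omega)$ norm and once more at $r=2$ for the initial datum, and match exponents. The arithmetic $\tfrac{2}{3r}-\tfrac{d-1}{2}\bigl(\tfrac{1}{2}-\tfrac{1}{r}\bigr)-\tfrac{1}{3}=-\bigl(\tfrac{d}{2}+\tfrac{1}{6}\bigr)\bigl(\tfrac{1}{2}-\tfrac{1}{r}\bigr)$ and $\tfrac{2}{3r}-\bigl(\tfrac{d}{2}-\tfrac{1}{6}\bigr)\bigl(\tfrac{1}{2}-\tfrac{1}{r}\bigr)-\tfrac{1}{3}=-\tfrac{d+1}{2}\bigl(\tfrac{1}{2}-\tfrac{1}{r}\bigr)$ both check out, and this is precisely the elaboration of the paper's remark that the corollary follows ``immediately'' from Lemma \ref{lem2}.
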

\begin{rmq}\label{rmqq}
Notice that for $\tilde{q}\geq q>2$ and $f\in C^{\infty}([0,T])$ we have
\[
\|f\|_{L^{q}([0,T])}\lesssim\|f\|_{L^{\tilde{q}}([0,T])},
\]
thus in order to prove Theorem \ref{thm2} it suffices to prove \eqref{equivals} (respective \eqref{equivalw}) with $q$ replaced by some $\tilde{q}\geq q$.
\end{rmq}

\subsection{Proof of Theorem \ref{thm2}}
Let  $\varphi_{0}\in\mathcal{S}(\mathbb{R}^{d-1})$, $k\geq 1$, $\omega=\omega_{k}>0$ and $u_{0}\in E_{k}(\Omega)$ be such that
\begin{equation}
u_{0}(x,y)=\frac{1}{(2\pi)^{d-1}}\int e^{iy\eta}
Ai(|\eta|^{\frac{2}{3}}x-\omega_{k})\hat{\varphi_{0}}(\eta)d\eta.
\end{equation}
\begin{enumerate}
\item \textbf{Schr\"{o}dinger equation}

Let $\tilde{q}$ be given by
\begin{equation}\label{tild}
\frac{1}{\tilde{q}}=\frac{(d-1)}{2}(\frac{1}{2}-\frac{1}{r}).
\end{equation}
Let $G_{s}(\eta)=|\eta|^{2}+\omega h^{\frac{2}{3}}|\eta|^{\frac{4}{3}}$. Using Corollary \ref{rema} and Remark \ref{rmqq} we are reduced to prove \eqref{equivals}, with $q$ replaced by $\tilde{q}$, i.e. in order to prove Theorem \ref{thm2} for the Schr\"{o}dinger operator it will be enough to establish 
\begin{equation}\label{surf}
\|e^{-\frac{it}{h}G_{s}}(\psi(hD_{y})\varphi_{0})\|_{L^{\tilde{q}}([0,T],L^{r}(\mathbb{R}^{d-1}))}\leq c h^{-\frac{(d-1)}{2}
(\frac{1}{2}-\frac{1}{r})}\|\psi(hD_{y})\varphi_{0}\|_{L^{2}(\mathbb{R}^{d-1})},
\end{equation}
where
\[
e^{-\frac{it}{h}G_{s}}(\psi(hD_{y})\varphi_{0})(t,y)=\frac{1}{(2\pi h)^{d-1}}\int e^{\frac{i}{h}(<y,\eta>-t(|\eta|^{2}+\omega h^{\frac{2}{3}}|\eta|^{\frac{4}{3}}))}\psi(\eta)\hat{\varphi}_{0}(\frac{\eta}{h})d\eta.
\]
Let $\omega=\omega_{k}$ and set
\begin{equation}\label{j}
J(z,\frac{t}{h}):=\int e^{i\frac{t}{h}(<z,\eta>-G_{s}(\eta))}\psi(\eta)d\eta.
\end{equation}
Recall that $0\not\in \text{supp} (\psi)$, thus the phase function is smooth everywhere on the support of $\psi$. 
With the notations in
\eqref{gama} we have to determine $\gamma_{d-1,h}(\frac{t}{h})=\text{sup}_{z\in\mathbb{R}^{d-1}}|J(z,\frac{t}{h})|$. Note that if $\frac{|t|}{h}$ is bounded we get immediately that $|J(z,\frac{t}{h})|$ is bounded, thus we can consider
the quotient $\frac{t}{h}$ to be large. Let $\lambda=\frac{t}{h}\gg 1$ and apply the stationary phase method. There is one critical point 
\[
z(\eta)=2\eta+\frac{4}{3}\omega
h^{\frac{2}{3}}\frac{\eta}{|\eta|^{\frac{2}{3}}},
\]
non-degenerate since $G''_{s}(\eta)=2Id+ O(h^{\frac{2}{3}})$ for $\eta$ away from $0$ and $h$ small enough, and we can also write $\eta=\eta(z)$. 
We obtain
\begin{equation}\label{jjjj}
J(z,\lambda)\simeq\Big(\frac{2\pi}{\sqrt{\lambda}}\Big)^{d-1}\frac{e^{-\frac{i\pi}{4}\text{sign}
G''_{s}(\eta(z))}}{\sqrt{\det G''_{s}(\eta(z))}}e^{i\lambda
\Phi(z,\eta(z))}\sigma(z,\lambda),
\end{equation}
where 
\[
\Phi(z,\eta)=<z,\eta>-G_{s}(\eta),\quad 
\Phi(z,\eta(z))=|\eta(z)|^{2}+\frac{\omega}{3}h^{\frac{2}{3}}|\eta(z)|^{\frac{4}{3}},
\] 
\[
\sigma(z;\lambda)\simeq\sum_{k\geq0}\lambda^{-k}\sigma_{k}(z),\quad \sigma_{0}(z)=\psi(\eta(z)).
\]
From the definition \eqref{gama} we deduce that we have
$\gamma_{d-1,h}(\lambda)\simeq \lambda^{-\frac{(d-1)}{2}}$. Consequently, for $\lambda=\frac{t}{h}\gg 1$ there exists some constant $C>0$ such that the following dispersive
estimate holds
\begin{equation}\label{dis}
\|e^{-\frac{it}{h}G_{s}}\psi(hD_{y})\varphi_{0}\|_{L^{\infty}_{y}(\mathbb{R}^{d-1})}\leq
Ch^{-(d-1)}(\frac{|t|}{h})^{-\frac{(d-1)}{2}}\|\psi(hD_{y})\varphi_{0}\|_{L^{1}_{y}(\mathbb{R}^{d-1})}.
\end{equation}
Interpolation between \eqref{dis} and the energy estimate gives
\begin{equation}\label{rr'}
\|e^{-\frac{it}{h}G_{s}}\psi(hD_{y})\varphi_{0}\|_{L^{r}_{y}(\mathbb{R}^{d-1})}\leq
Ch^{-\frac{(d-1)}{2}(1-\frac{2}{r})}|t|^{-\frac{(d-1)}{2}(1-\frac{2}{r})}\|\psi(hD_{y})\varphi_{0}\|_{L^{r'}_{y}(\mathbb{R}^{d-1})}.
\end{equation}
Let $\tilde{q}'$ be such that $\frac{1}{\tilde{q}}+\frac{1}{\tilde{q}'}=1$ and let $T$ be the operator
\[
T:L^{2}(\mathbb{R}^{d-1})\rightarrow L^{\tilde{q}}([0,T_{0}],L^{r}_{y}(\mathbb{R}^{d-1}))
\]
which to a given $\varphi_{0}\in L^{2}(\mathbb{R}^{d-1})$
associates $e^{-\frac{it}{h}G_{s}}\psi(hD_{y})\varphi_{0}\in L^{\tilde{q}}([0,T_{0}],L^{r}_{y}(\mathbb{R}^{d-1}))$. Then inequality
\eqref{rr'} implies that for every $g\in
L^{\tilde{q}'}([0,T_{0}],L^{r'}_{y}(\mathbb{R}^{d-1}))$ we have
\begin{equation}
\|TT^{*}g\|_{L^{\tilde{q}}(0,T_{0}]L^{r}_{y}}=\|\int_{0}^{T}e^{-\frac{i(t-s)}{h}G_{s}}\psi\psi^{*}g(s)ds
\| _{L^{\tilde{q}}((0,T_{0}],L^{r}_{y})}\leq
\end{equation}
\[
\leq
Ch^{-\frac{(d-1)}{2}(1-\frac{2}{r})}\|\int_{0}^{T}|t-s|^{-\frac{(d-1)}{2}(1-\frac{2}{r})}
\|g(s)\|_{L^{r'}_{y}}ds\|_{L^{\tilde{q}}((0,T_{0}])}.
\]
If $\tilde{q}>2$ the application $|t|^{-\frac{2}{\tilde{q}}}\ast :
L^{\tilde{q}'}\rightarrow L^{\tilde{q}}$ is bounded by the
Hardy-Littlewood-Sobolev theorem and we deduce that the
$L^{\tilde{q}}((0,T_{0}],L^{r}_{y}(\mathbb{R}^{d-1}))$ norm of the operator $\text{TT}^{*}$ is bounded from
above by $h^{-\frac{(d-1)}{2}(1-\frac{2}{r})}$, thus the norm $\|T\|_{L^{2}\rightarrow L^{\tilde{q}}((0,T_{0}],L^{r}_{y}(\mathbb{R}^{d-1}))}$ is bounded from above by $h^{-\frac{(d-1)}{2}(\frac{1}{2}-\frac{1}{r})}$.

\begin{itemize}
\item \textbf{Optimality:}

Let $\eta_{0}\in\mathbb{R}^{d-1}\setminus\{0\}$ and for $y\in\mathbb{R}^{d-1}$ set
\begin{equation}
\varphi_{0,h}(y,\eta_{0})=h^{-(d-1)/4}e^{\frac{i\Phi_{0}(y,\eta_{0})}{h}},\quad \Phi_{0}(y,\eta_{0})=<y,\eta_{0}>+\frac{i|y|^{2}}{2}
\end{equation}
that satisfies
\[
\|\varphi_{0,h}\|_{L^{2}_{y}}=\pi^{(d-1)/4},\quad
\|\varphi_{0,h}\|_{L^{\infty}_{y}}=h^{-(d-1)/4}.
\] 
\begin{prop}
Let $T_{0}>0$ be small enough and let $t\in[-T_{0},T_{0}]$. Then the (local, holomorphic) solution $\varphi_{h}(t,y,\eta_{0})$ of \eqref{equivs} with initial data $\varphi_{0,h}(y,\eta_{0})$ has the form
\[
\varphi_{h}(t,y,\eta_{0})\simeq h^{-(d-1)/4}e^{\frac{i\Phi(t,y,\eta_{0})}{h}}\sigma(t,y,\eta_{0},h),
\]
where the phase function $\Phi(t,y,\eta_{0})$ satisfies the eikonal equation \eqref{eikon} locally in time $t$ and for $y$ in a neighborhood of $0$, and where
 $\sigma(t,y,\eta_{0},h)\simeq \sum_{k\geq 0} h^{k}\sigma_{k}(t,y,\eta_{0})$ is a classical analytic symbol (see \cite[Chps.1,9; Thm.9.1]{sjos} for the definition and for a complete proof). 
 \end{prop}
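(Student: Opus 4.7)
The proof is a complex WKB / analytic geometric-optics construction in the framework of Sjöstrand cited in the statement. First, I would substitute the ansatz $\varphi_h = h^{-(d-1)/4}e^{i\Phi(t,y,\eta_0)/h}\sigma(t,y,\eta_0,h)$ into \eqref{equivs} and formally expand. The action of $-h^2\Delta_{d-1}$ on $e^{i\Phi/h}\sigma$ is handled by Leibniz, while the nonlocal operator $h^2\omega_k|\Delta_{d-1}|^{2/3}$ is treated via the analytic pseudodifferential calculus: the cutoff $\psi(hD_y)$ keeps the frequencies in a fixed annulus away from $\eta=0$, where $|\eta|^{4/3}$ is real-analytic, so its action on $e^{i\Phi/h}\sigma$ admits an asymptotic expansion whose leading term is $h^{2/3}\omega_k|\nabla_y\Phi|^{4/3}\sigma$. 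Matching the principal order yields the eikonal equation
\[
\partial_t\Phi + G_s(\nabla_y\Phi)=0, \qquad \Phi|_{t=0}=\Phi_0(y,\eta_0)=\langle y,\eta_0\rangle+\tfrac{i}{2}|y|^2,
\]
with $G_s(\eta)=|\eta|^2+\omega_k h^{2/3}|\eta|^{4/3}$; successive orders will give a triangular hierarchy of first-order linear transport equations for the $\sigma_k$.

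Second, I would solve the complex eikonal by Hamilton--Jacobi. The Hamilton equations $\dot y = \partial_\eta G_s(\eta)$, $\dot\eta=0$, integrated from the complex initial Lagrangian $\{(y_0,\eta_0+iy_0)\}$, give $\eta\equiv\eta_0+iy_0$ and $y(t,y_0)=y_0+t\,\partial_\eta G_s(\eta_0+iy_0)$. The Jacobian of $y_0\mapsto y(t,y_0)$ at $t=y_0=0$ is $(1+2t)I+O(h^{2/3}t)$, hence holomorphically invertible for $|t|\le T_0$ small uniformly in $h$; the resulting holomorphic germ $\Phi(t,y,\eta_0)$ is recovered by integrating $\eta\cdot\dot y-G_s(\eta)$ along the bicharacteristics. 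Crucially, the positive definiteness $\operatorname{Im}\partial^2_y\Phi_0=I$ persists for short times, producing $\operatorname{Im}\Phi(t,y,\eta_0)\ge c|y-y_c(t)|^2$ around a real center $y_c(t)=2t\eta_0+O(h^{2/3}t)$, which is exactly what makes $\varphi_h$ behave as a Gaussian wave packet of width $\sim h^{1/2}$.

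Third, the transport equation at order $h$ reads, at leading order, $(\partial_t+2\nabla_y\Phi\cdot\nabla_y+\Delta\Phi)\sigma_0 + (\text{subprincipal corrections from }h^{2/3}|D|^{4/3})=0$ with $\sigma_0|_{t=0}=1$, and is solved by quadrature along the bicharacteristics found above; each successive $\sigma_k$ satisfies an inhomogeneous linear ODE along characteristics, with source built from $\sigma_0,\ldots,\sigma_{k-1}$ and initial datum $\sigma_k|_{t=0}=0$. To conclude that $\sigma\simeq\sum_{k\ge 0}h^k\sigma_k$ is a classical analytic symbol one needs uniform Cauchy estimates $|\sigma_k|\le C_0 C^k k!$ on a common Grauert tube of the real domain; this is precisely the output of Sjöstrand's Theorem 9.1 together with the analytic WKB machinery of Chapter 1. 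A Borel-type resummation then converts the formal series into a genuine holomorphic $\sigma(t,y,\eta_0,h)$, and the WKB identities give that $\varphi_h$ solves \eqref{equivs} modulo an $O(e^{-c/h})$ error, negligible for the $L^p$ estimates to follow.

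The main obstacle is the nonlocal operator $|\Delta_{d-1}|^{2/3}$, whose symbol fails to be smooth at $\eta=0$: one must carry the entire construction out at the level of analytic pseudodifferential operators, expressing the action of $|\Delta_{d-1}|^{2/3}$ on $e^{i\Phi/h}\sigma$ as another WKB wave via a complex stationary-phase / Kuranishi-type contour deformation. This bookkeeping is exactly what Sjöstrand's framework is designed for, and it is the technically delicate part of the argument; once it is set up, both the complex eikonal and the transport equations are solved by standard analytic ODE methods along the bicharacteristics.
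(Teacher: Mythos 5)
Your proposal follows essentially the same complex geometric-optics / analytic-WKB route as the paper: solve the complex eikonal equation by Hamilton--Jacobi, propagate the strict positivity of $\operatorname{Im}\nabla_y^2\Phi$ along the complex bicharacteristics (the paper phrases this via positivity of the complex Lagrangian plane under the symplectic flow, you via short-time invertibility of the Jacobian $\partial y/\partial y_0$), then solve the transport hierarchy and invoke Sj\"ostrand's analytic-symbol estimates and resummation. One small computational slip that does not affect the conclusion: since $\nabla_y\Phi_0=\eta_0+iy_0$, one has $\partial y/\partial y_0=\mathrm{Id}+it\,\partial_\eta^2 G_s$, so the short-time Jacobian is $(1+2it)\mathrm{Id}+O(h^{2/3}t)$ rather than $(1+2t)\mathrm{Id}$.
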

\begin{proof} 
For $t\in[-T_{0},T_{0}]$ small enough we can construct approximatively 
\[
\varphi_{h}(t,y,\eta_{0})=\exp(-i\frac{t}{h}G_{s})\varphi_{0,h}(y,\eta_{0})
\]
to be the local solution to \eqref{equivs} with initial data $\varphi_{0,h}(y,\eta_{0})$.  In order to solve explicitly 
\eqref{equivs} we use geometric optic's arguments: let first $\Phi(t,y,\eta_{0})$ be the (local) solution to the eikonal equation
\begin{equation}\label{eikon}
\left\{
                \begin{array}{ll}
                \partial_{t}\Phi+|\nabla_{y}\Phi|^{2}+\omega|\nabla_{y}\Phi|^{4/3}=0,\\
                \Phi|_{t=0}=\Phi_{0}(y,\eta_{0}).
                \end{array}
                \right.
\end{equation}
The associated complex Lagrangian manifold is given by
\[
\Lambda_{\Phi}=\{(t,y,\tau,\eta)|\tau=\partial_{t}\Phi,\eta=\nabla_{y}\Phi\}.
\]
Let $q(t,y,\tau,\eta)=\tau+|\eta|^{2}+\omega|\eta|^{4/3}$ and let $H_{q}$ denote the Hamilton field associated to $q$. Then $\Lambda_{\Phi}$ is generated by the integral curves of $H_{q}$ which satisfy
\begin{equation}
\left\{
                \begin{array}{ll}
                (\dot{t},\dot{y},\dot{\tau},\dot{\eta})=(1,\nabla_{\eta}q,0,0),\\
                (t,y,\tau,\eta)|_{0}=(0,y_{0},-|\nabla_{y}\Phi_{0}|^{2}-\omega|\nabla_{y}\Phi_{0}|^{4/3},\nabla_{y}\Phi_{0}=\eta_{0}+iy_{0}).
                \end{array}
                \right.
\end{equation}
We parametrize them by $t$ and write the solution 
\[
(y(t,y_{0},\eta_{0}),\eta(t,y_{0},\eta_{0}))=\exp{(tH_{q}(y_{0},\eta_{0}))}.
\] 
The intersection
\[
\Lambda^{\mathbb{R}}_{\Phi}:=\{\exp tH_{q}(y_{0},\eta_{0})| t\in\mathbb{R}\}\cap T^{*}\mathbb{R}^{d}\setminus\{0\}
\]
is empty unless $y_{0}=0$, since it is so at $t=0$ and since $d\exp{(tH_{q})}$ preserves the positivity of the $\mathbb{C}$-Lagrangian $\Lambda_{\Phi}$ (see Definition \ref{dfnlagrang} of the Appendix and Lemma \ref{lemlag}). Thus on the bicharacteristic starting from $y_{0}=0$ the imaginary part of the phase $\Phi(t,y(t,0,\eta_{0}),\eta_{0})$ vanishes. Moreover, the following holds:
\begin{prop}
The phase $\Phi$ satisfies, for $y$ in a neighborhood of $0$, 
\[
\Phi(t,y,\eta_{0})=\Phi(t,y(t,0,\eta_{0}),\eta_{0})+(y-y(t,0,\eta_{0}))\eta(t,0,\eta_{0})
\]
\[
+(y-y(t,0,\eta_{0}))B(t,y,\eta_{0})(y-y(t,0,\eta_{0})),
\]
where the phase $\Phi(t,y(t,0,\eta_{0}),\eta_{0})$ and its derivative $\eta(t,0,\eta_{0})$ with respect to the $y$ variable are real and the imaginary part of $B(t,y,\eta_{0})\in\mathcal{M}_{d-1}(\mathbb{C})$ is positive definite.
\end{prop}
\begin{proof}
Indeed, the initial function $\Phi_{0}$ is complex valued with Hessian $\text{Im}\nabla^{2}_{y}\Phi_{0}$ positive definite. Then it follows from \cite[Prop.21.5.9]{hormand} that the complexified tangent plane of $\Lambda_{\Phi_{0}}$ is a strictly positive Lagrangian plane (see \cite[Def.21.5.5]{hormand}). The tangent plane at $(y(t,0,\eta_{0}),\eta(t,0,\eta_{0}))$ is the image of the complexified tangent plane of $\Lambda^{\mathbb{R}}_{\Phi_{0}}$ at $(y_{0}=0,\eta_{0})$ under the complexification of a real symplectic map, hence strictly positive. More details for these arguments are given in Section \ref{propapos} of the Appendix.
\end{proof}
We look now for $\varphi_{h}(t,y,\eta_{0})$ of the form $h^{-(d-1)/4}e^{\frac{i}{h}\Phi(t,y,\eta_{0})}\sigma(t,y,\eta_{0},h)$ where $\sigma=\sum h^{k}\sigma_{k}$ must be an analytic classical symbol. Substitution in \eqref{equivs} yields the following system of transport equations 
\[
\left\{       \begin{array}{ll}
                L\sigma_{0}=0,\quad \sigma_{0}|_{t=0}=1,\\
                L\sigma_{1}+f_{1}(\sigma_{0})=0,\quad \sigma_{1}|_{t=0}=0,\\
                .\\
                .\\
                L\sigma_{k}+f_{k}(\sigma_{0},..,\sigma_{k-1})=0,\quad \sigma_{k}|_{t=0}=0,\\
                .
                \end{array}
                \right.
\]
where $L=\frac{\partial}{\partial t}+\sum_{j=1}^{d-1}q^{j}(y,\nabla_{y}\Phi)+s(y,\eta_{0})$ with $s(y,\eta_{0})$ analytic and $f_{k}(\sigma_{0},..,\sigma_{k-1})$ a linear expression with analytic coefficients of derivatives of $\sigma_{0}$,.., $\sigma_{k-1}$. It is clear that we can solve this system for $y$ in some complex domain $\mathcal{O}$, independently of $k$; in \cite[Chps. 9,10]{sjos} it is shown that in this way $\sigma$ becomes an analytic symbol there.
\end{proof}
Let us define $\underline{\sigma}_{k}(t,y,\eta_{0})=\sigma_{k}(t,y,\eta_{0})$ for $(t,y)\in(-T_{0},T_{0})\times\mathcal{O}$ and $\underline{\sigma}_{k}(t,y,\eta_{0})=0$ otherwise and let $\underline{\sigma}(t,y,\eta_{0},h):=\sum_{k\geq 0}h^{k}\underline{\sigma}_{k}(t,y,\eta_{0})$. Set also 
\[
\underline{\varphi}_{h}(t,y,\eta_{0})=h^{-(d-1)/4}e^{\frac{i}{h}\Phi(t,y,\eta_{0})}\underline{\sigma}(t,y,\eta_{0},h),
\]
thus $\underline{\varphi}_{h}$ solves \eqref{equivs} for $t\in[-T_{0},T_{0}]$ and $y\in\mathbb{R}^{d-1}$ and we can compute the $L^{r}(\mathbb{R}^{d-1})$ norm of $\underline{\varphi}_{h}(t,y,\eta_{0})$ globally in $y$:
\[
\|\underline{\varphi}_{h}(t,.,\eta_{0})\|_{L^{r}(\mathbb{R}^{d-1})}=
\]
\[
h^{-(d-1)/4}(\int e^{-\frac{r}{h}(y-y(t,0,\eta_{0}))Im B(t,y,\eta_{0})(y-y(t,0,\eta_{0}))}|\underline{\sigma}(t,y,\eta_{0},h)|^{r}dy)^{1/r}\simeq
\]
\[
h^{-(d-1)/4+(d-1)/2r}(1+O(h^{-1}))
\]
and consequently for $T_{0}$ small enough we have 
\[
\|\underline{\varphi}_{h}\|_{L^{q}((0,T_{0}],L^{r}(\mathbb{R}^{d-1}))}= h^{-\frac{d-1}{2}(\frac{1}{2}-\frac{1}{r})}(1+O(h^{-1}))
\]
and we conclude using Corollary \ref{rema}.
\end{itemize}

\item \textbf{Wave equation}
Let $(q,r)$ be a sharp wave-admissible pair in dimension $d\geq 2$, $q> 2$, and let $\tilde{q}$ be given by \eqref{tild}. Using Corollary \ref{rema} and Remark \ref{rmqq} and since $\tilde{q}\geq q$, we are reduced to prove \eqref{equivalw} with $q$ replace by $\tilde{q}$, i.e. 
\[
\|e^{i\frac{t}{h}G_{w}}(\psi(hD_{y})\varphi_{0})\|_{L^{\tilde{q}}([0,T_{0}],L^{r}(\mathbb{R}^{d-1}))}\lesssim h^{-(\frac{d}{2}-\frac{1}{6})
(\frac{1}{2}-\frac{1}{r})}\|\psi(hD_{y})\varphi_{0}\|_{L^{2}(\mathbb{R}^{d-1})},
\]
where $G_{w}(\eta)=\sqrt{|\eta|^{2}+\omega
h^{\frac{2}{3}}|\eta|^{\frac{4}{3}}}$ and
\[
e^{i\frac{t}{h}G_{w}}(\psi(hD_{y})\varphi_{0})(t,y)=\frac{1}{(2\pi h)^{d-1}}\int e^{\frac{i}{h}(<y,\eta>-t\sqrt{|\eta|^{2}+\omega h^{\frac{2}{3}}|\eta|^{\frac{4}{3}}})}\psi(\eta)\hat{\varphi}_{0}(\frac{\eta}{h})d\eta.
\]
In order to obtain dispersive estimates we need the following
\begin{prop}\label{propgama}
Let $\lambda=\frac{t}{h}$ and set as before
\begin{equation}\label{gew}
J(z,\lambda):=\int e^{i\lambda(z\eta-G_{w}(\eta))}\psi(\eta)d\eta,\quad \gamma_{d-1,h}(\lambda)=\sup_{z\in \mathbb{R}^{d-1}}|J(z,\lambda)|.
\end{equation}
Then the function $\gamma_{d-1,h}$ satisfies
\[
\gamma_{d-1,h}(\lambda)\simeq h^{-1/3}\lambda^{-(d-1)/2}.
\]
\end{prop}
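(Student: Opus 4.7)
The plan is to split the oscillatory integral into angular and radial pieces, exploiting the fact that $G_{w}$ depends on $|\eta|$ alone, and then combine the optimal angular decay with a radial stationary phase whose Hessian is of size $h^{2/3}$. First I would pass to polar coordinates $\eta=r\theta$, $\theta\in S^{d-2}$, writing
\begin{equation*}
J(z,\lambda)=\int_{0}^{\infty}e^{-i\lambda G_{w}(r)}r^{d-2}\Bigl(\int_{S^{d-2}}e^{i\lambda r\,z\cdot\theta}\psi(r\theta)\,d\theta\Bigr)dr.
\end{equation*}
The angular factor is the Fourier transform of a smooth, compactly supported density on the sphere, so for $\lambda r|z|\gtrsim 1$ the classical Bessel asymptotic decomposes it as a sum of terms $(\lambda r|z|)^{-(d-2)/2}e^{\pm i\lambda r|z|}b_{\pm}(r,\lambda,h,z)$ with smooth amplitudes bounded in $\lambda$. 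This reduces $J(z,\lambda)$ to two one-dimensional oscillatory integrals
\begin{equation*}
I_{\pm}(|z|,\lambda)=\int e^{i\lambda(\pm r|z|-G_{w}(r))}a_{\pm}(r,\lambda,h,z)\,dr,
\end{equation*}
where $a_{\pm}$ is smooth in $r$, supported in a fixed compact subset of $(0,\infty)$ determined by $\psi$, with amplitude of order $\lambda^{-(d-2)/2}$.

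Second, I would analyse these radial integrals. A direct expansion of $G_{w}(r)=r\sqrt{1+\omega h^{2/3}r^{-2/3}}$ yields
\[
G_{w}'(r)=1+\frac{\omega h^{2/3}}{6}r^{-2/3}+O(h^{4/3}),\qquad G_{w}''(r)=-\frac{\omega h^{2/3}}{9}r^{-5/3}+O(h^{4/3}),
\]
so the radial phase is critical exactly when $|z|=G_{w}'(r)$, which forces $|z|=1+O(h^{2/3})$ and singles out the $+$ sign. For $|z|$ outside an $O(h^{2/3})$ neighborhood of $1$, integration by parts in $r$ gives $|I_{\pm}|=O(\lambda^{-N})$ for every $N$. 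Inside the critical neighborhood the critical point $r_{*}$ is unique and satisfies $|G_{w}''(r_{*})|\sim h^{2/3}$, and classical stationary phase (valid when $\lambda h^{2/3}\gg 1$) gives
\[
|I_{+}(|z|,\lambda)|\lesssim (\lambda h^{2/3})^{-1/2}\cdot\lambda^{-(d-2)/2}=h^{-1/3}\lambda^{-(d-1)/2}.
\]
In the complementary regime $\lambda h^{2/3}\lesssim 1$, the target bound $h^{-1/3}\lambda^{-(d-1)/2}$ is automatic since the trivial estimate $|I_{+}|\lesssim \lambda^{-(d-2)/2}$ already suffices. For the matching lower bound I would fix $\eta_{*}$ in the interior of the support of $\psi$ and choose $z_{*}=\nabla G_{w}(\eta_{*})$, so that the phase $z_{*}\eta-G_{w}(\eta)$ on $\mathbb{R}^{d-1}$ has a non-degenerate critical point whose Hessian has determinant of size exactly $h^{2/3}$; the leading stationary phase term for $J(z_{*},\lambda)$ is then of size $h^{-1/3}\lambda^{-(d-1)/2}$, showing that the supremum is attained up to constants.

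The main obstacle is the mismatch of scales in the Hessian of $G_{w}$ at a critical point: the $(d-2)$ tangential directions contribute eigenvalues of order $1$ while the single radial direction contributes an eigenvalue of order $h^{2/3}$. One cannot invoke a single stationary phase statement at once, and must keep the two scales separate, ensuring that the angular expansion produces an amplitude whose radial derivatives are controlled uniformly in $h$ and $z$, so that the subsequent radial stationary phase is not spoiled by error terms. Careful bookkeeping is needed for uniformity in $z$ across the $O(h^{2/3})$-thick critical neighborhood and for the transition regime $\lambda h^{2/3}\sim 1$, where neither stationary phase nor the trivial bound is sharp on its own.
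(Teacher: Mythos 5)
Your proposal is correct and follows essentially the same route as the paper: polar coordinates, stationary phase in the angular variables yielding the $\lambda^{-(d-2)/2}$ factor, localization of $|z|$ to an $O(h^{2/3})$-neighborhood of $1$, and a one-dimensional radial stationary phase with Hessian of size $h^{2/3}$, split according to whether $\mu=\lambda h^{2/3}$ is bounded or large. The paper's only presentational differences are that it assumes $\psi$ radial without loss of generality and makes the rescaling $|z|-1=h^{2/3}x$ explicit, which is exactly your Hessian-scale bookkeeping in different clothing.
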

We postpone the proof of Proposition \ref{propgama} for the end of this section and proceed.

\emph{End of the proof of Theorem \ref{thm2}}

From \eqref{dispersion} the dispersive estimates
read as follows
\begin{equation}\label{dis1}
\|e^{-\frac{it}{h}G_{w}}\psi(hD_{y})\varphi_{0}\|_{L^{\infty}(\mathbb{R}^{d-1})}\lesssim
h^{-d+1-\frac{1}{3}}(\frac{t}{h})^{-\frac{(d-1)}{2}}\|\psi(hD_{y})\varphi_{0}\|_{L^{1}(\mathbb{R}^{d-1})}.
\end{equation}
Lemma \ref{lem1} can be applied at this point of the proof for the $\frac{d-1}{2}$ wave-admissible pair in dimension $d-1$, $(\tilde{q},r)$, in order to obtain 
\begin{equation}
\|e^{-\frac{it}{h}G_{w}}\psi(hD_{y})\varphi_{0}\|_{L^{2}\rightarrow L^{\tilde{q}}(0,T_{0}],L^{r}(\mathbb{R}^{d-1})}\lesssim
h^{-(\frac{d}{2}-\frac{1}{6})(\frac{1}{2}-\frac{1}{r})}.
\end{equation}
We conclude using again Corollary \ref{rema}. It remains to prove Proposition \ref{propgama}.
\begin{proof}\emph{of Proposition \ref{propgama}}

As before, the case of most interest in the study of $\sup_{z\in \mathbb{R}^{d-1}}|J(z,\lambda)|$ will be the one for which $\lambda\gg 1$, since when $\frac{t}{h}$ remains bounded good estimates are found immediately.
We shall thus concentrate on the case $\lambda\gg 1$ and we apply the stationary phase lemma. Notice that on the support of $\psi$ the phase function of $J$ is smooth. On the other hand, since $\eta$ stays away from a neighborhood of $0$, the critical point of $J(z,\lambda)$ satisfies
\[
z=G'_{w}(\eta)=\frac{\eta}{|\eta|}+O(h^{\frac{2}{3}}).
\]
In order to
estimate $|J(z,\lambda)|$ it will be thus enough to localize in a $h^{2/3}$ neighborhood of $|z|=1$. We shall assume without loss of generality that $\psi$ is radial and set $\tilde{\psi}(|\eta|)=\psi(\eta)$ in which case $J(.,\lambda)$ depends also only on $|z|$ and it is enough to estimate
\begin{equation}\label{jw}
J(|z|e_{1},\lambda)=\int_{0}^{\infty}\int_{\mathbb{S}^{d-2}}e^{i\lambda(|z|\rho\theta_{1}-\sqrt{\rho^{2}+\omega h^{2/3}\rho^{4/3}})}\tilde{\psi}(\rho)\rho^{d-2}d\theta d\rho,
\end{equation}
where $e_{1}=(1,0,..,0)\in\mathbb{R}^{d-1}$ and $\mathbb{S}^{d-2}$ is the unit sphere in $\mathbb{R}^{d-1}$.
The derivative with respect to $\rho$ gives $|z|\theta_{1}=\frac{\rho}{\sqrt{\rho^{2}+\omega h^{2/3}\rho^{4/3}}}\simeq 1$ and making integrations by parts with respect to $\rho$ one sees that the contribution in the integral in $\eta$ in \eqref{gew} is $O(\lambda^{-\infty})$ if $|z|\ll 1$. Consequently, one may assume $|z|\geq c>0$. As a consequence $\theta_{1}$ can be taken close to $1$, and since on the sphere $\mathbb{S}^{d-2}$ one has $\theta_{1}=\pm\sqrt{1-\theta^{'2}}$, $\theta=(\theta_{1},\theta')\in\mathbb{R}^{d-1}$, we can introduce a cutoff function $b(\theta')$ supported near $0$ such that the right hand side in \eqref{jw} writes, modulo $O((\lambda|z|)^{-\infty})$
\[
\sum_{\pm}\int_{0}^{\infty}\int_{\theta'}e^{i\lambda(\pm\rho|z|\sqrt{1-\theta^{'2}}-\sqrt{\rho^{2}+\omega h^{2/3}\rho^{4/3}})}b(\theta')\tilde{\psi}(\rho)\rho^{d-2}d\theta' d\rho.
\]
The term corresponding to the critical point $-1$ gives a contribution $O(\lambda^{-\infty})$ in the integral with respect to $\rho$ by non-stationary phase theorem. 
Using the stationary phase theorem for the integral in $\theta'$ we find
\[
J(|z|e_{1},\lambda)\simeq\int_{0}^{\infty}e^{i\lambda(|z|\rho-\sqrt{\rho^{2}+\omega h^{2/3}\rho^{4/3}})}\tilde{\psi}(\rho)\sigma_{+}(\lambda|z|\rho)d\rho+O(\lambda^{-\infty}),
\]
where $\sigma_{+}$ is a symbol of order $-(d-2)/2$. In order to estimate this term we write its phase function as follows
\[
|z|\rho-\sqrt{\rho^{2}+\omega h^{2/3}\rho^{4/3}}=(|z|-1)\rho-(\sqrt{\rho^{2}+\omega h^{2/3}\rho^{4/3}}-\rho)
\]
and set $|z|-1=h^{2/3}x$. Hence $J(|z|e_{1},\lambda)$ can be estimated by
\begin{equation}\label{supj}
J((1+h^{2/3}x)e_{1},\lambda)\simeq\int_{0}^{\infty} e^{i\mu(\rho
x-\frac{\omega\rho^{1/3}}{1+\sqrt{1+\omega h^{2/3}\rho^{1/3}}})}\tilde{\psi}(\rho)\sigma_{+}(\lambda\rho(1+h^{2/3}x))d\rho.
\end{equation}

We distinguish two cases, weather $1\ll\lambda=t/h\lesssim h^{-2/3}$ or $1\ll\mu=\lambda h^{2/3}=h^{-1/3}t$. 
\begin{itemize}
\item In the first case $\mu\lesssim 1$  and formula \eqref{supj}
give us bounds from above for $\sup_{z}|J(z,\lambda)|$ of the form
$\lambda^{-(d-2)/2}$ (recall that for $|z|$ away from a $h^{2/3}$-neighborhood of $1$ the problem was trivial by non-stationary lemma). 

\item If $1\ll\mu=h^{-1/3}t$ and $x\neq 0$ we apply the stationary phase lemma in dimension one with
phase $\Phi(\rho)=\rho x-\frac{\omega}{2}\rho^{1/3}$ which is smooth since $\rho\neq 0$ and has one critical, non-degenerate ($\Phi''(\rho)=\frac{\omega}{9}\rho^{-5/3}\neq 0$) point satisfying
\[
\rho=(6x/\omega)^{-3/2}.
\]
For values of $x$ for which $(6x/\omega)^{-3/2}$ belongs to the support of $\tilde{\psi}$ we find
\begin{equation}
J((1+h^{2/3}x)e_{1},\lambda)\simeq C(x)\lambda^{-\frac{d-2}{2}}\mu^{-\frac{1}{2}}+O(\lambda^{-\frac{d-2}{2}}\mu^{-3/2})
\end{equation}
\[
\simeq C(x)\lambda^{-\frac{d-1}{2}}h^{-\frac{1}{3}}
+O(\lambda^{-\frac{d-2}{2}}\mu^{-3/2}),
\]
with $C(x)$ bounded and consequently we can determine $\gamma_{d-1,h}$ defined by \eqref{gama} where $n=d-1$ and $G=G_{w}$. We find
\begin{equation}
\gamma_{d-1,h}(\frac{t}{h})\simeq
h^{-\frac{1}{3}}\Big(\frac{t}{h}\Big)^{-\frac{d-1}{2}},
\end{equation}
thus the proof is complete.
\end{itemize}
\end{proof}
\end{enumerate}

\section{Conormal waves with cusp in dimension $d=2$}\label{cuspd2}
In what follows let $0<\epsilon\ll 1$ be small. 
We shall construct a sequence $W_{h,\epsilon}$ of approximate solutions of the wave equation
\begin{equation}\label{ondes}
(\partial^{2}_{t}-\Delta_{D}) V(t,x,y)=0\quad \text{for}\quad  (t,x,y)\in \mathbb{R}\times\mathbb{R}^{2},
\quad V|_{[0,1]\times\partial\Omega}=0,
\end{equation}
which contradicts the Strichartz estimates of the free space (see Proposition \ref{propund}). Using the approximate solutions $W_{h,\epsilon}$ we shall conclude Theorem \ref{thm1}  by showing that one can find (exact) solutions $V_{h,\epsilon}$ of \eqref{ondes} which provide losses of derivatives for the $L^{q}([0,1],L^{r}(\Omega))$ norms of at least $\frac{1}{6}(\frac{1}{4}-\frac{1}{r})-\epsilon$ for $r>4$ when compared to the free space, $(q,r)$ being a wave-admissible pair in dimension $2$. 

\subsection{Motivation for the choice of the approximate solution}
Let the wave operator be given by
$\square=\partial^{2}_{t}-\partial^{2}_{x}-(1+x)\partial^{2}_{y}$
and let $p(t,x,y,\tau,\xi,\eta)=\xi^{2}+(1+x)\eta^{2}-\tau^{2}$ denote its (homogeneous) symbol. The characteristic set of $\square$ is the closed conic set $\{(t,x,y,\tau,\xi,\eta)|p(t,x,y,\tau,\xi,\eta)=0\}$, denoted $\text{Char}(p)$. 
We define the semi-classical wave front set $WF_{h}(u)$ of a distribution $u$ on $\mathbb{R}^{3}$ to be the complement of the set of points $(\rho=(t,x,y),\zeta=(\tau,\xi,\eta))\in\mathbb{R}^{3}\times(\mathbb{R}^{3}\setminus 0)$  for which there exists a symbol $a(\rho,\zeta)\in\mathcal{S}(\mathbb{R}^{6})$ such that $a(\rho,\zeta)\neq 0$ and for all integers $m\geq 0$ the following holds
\[
\|a(\rho,hD_{\rho})u\|_{L^{2}}\leq c_{m}h^{m}.
\]
Let $\rho=\rho(\sigma)$, $\zeta=\zeta(\sigma)$ be a bicharacteristic of $p(\rho,\zeta)$, i.e. such that $(\rho,\zeta)$ satisfies
\begin{equation}
\frac{d\rho}{d\sigma}=\frac{\partial p}{\partial\zeta},\quad\frac{d\zeta}{d\sigma}=-\frac{\partial p}{\partial\rho},\quad
p(\rho(0),\zeta(0))=0.
\end{equation}
Assume that the interior of $\Omega$ is given by the inequality $\gamma(\rho)>0$, in this case $\gamma(\rho=(t,x,y))=x$. Then $\rho=\rho(\sigma)$, $\zeta=\zeta(\sigma)$ is tangential to $\mathbb{R}\times\partial\Omega$ if 
\begin{equation}
\gamma(\rho(0))=0,\quad \frac{d}{d\sigma}\gamma(\rho(0))=0.
\end{equation}
We say that a point $(\rho,\zeta)$ on the boundary is a gliding point if it is a tangential point and if in addition
\begin{equation}\label{hypconv}
\frac{d^{2}}{d\sigma^{2}}\gamma(\rho(0))<0.
\end{equation}
This is equivalent (see for example \cite{esk77}) to saying that $(\rho,\zeta)\in T^{*}(\mathbb{R}\times\partial\Omega)\setminus 0$ is a gliding point if
\begin{equation}\label{hyconv2}
p(\rho,\zeta)=0,\quad \{p,\gamma\}|_{(\rho,\zeta)}=0,\quad \{\{p,\gamma\},p\}|_{(\rho,\zeta)}>0,
\end{equation}
where $\{.,.\}$ denotes the Poisson braket. We say that a point $(\rho,\zeta)$ is hyperbolic if $x=0$ and $\tau^{2}>\eta^{2}$, so that there are two distinct nonzero real solutions $\xi$ to $\xi^{2}+(1+x)\eta^{2}-\tau^{2}=0$. 

Consider an approximate solution for \eqref{ondes} of the form
\begin{equation}\label{integral1}
\int e^{\frac{i}{h}(y\eta+t\tau+(x+1-\frac{\tau^{2}}{\eta^{2}})\xi+\frac{\xi^{3}}{3\eta^{2}})}g(t,\xi/\eta,\tau,h)\Psi(\eta)/\eta d\xi d\eta d\tau
\end{equation}
where the symbol $g$ is a smooth function independent of $x$, $y$ and where $\Psi\in C^{\infty}_{0}(\mathbb{R}^{*})$ is supported for $\eta$ in a small neighborhood of $1$, $0\leq\Psi(\eta)\leq 1$, $\Psi(\eta)=1$ for $\eta$ near $1$. This choice is motivated by the following: if $v(t,x,y)$ satisfies $(\partial^{2}_{t}-\partial^{2}_{x}-(1+x)\partial^{2}_{y})v=0$, then taking the Fourier transform in time $t$ and space $y$ we get $\partial^{2}_{x}\hat{v}=((1+x)\eta^{2}-\tau^{2})\hat{v}$, thus $\hat{v}$ can be expressed using Airy's function (given in Section \ref{secairy}) and its derivative. After the change of variables $\xi=\eta s$, the Lagrangian manifold associated to the phase function $\Phi$ of \eqref{integral1} will be given by
\begin{equation}
\Lambda_{\Phi}=\{(t,x,y,\tau=\partial_{t}\Phi,\xi=\partial_{x}\Phi=\eta s,\eta=\partial_{y}\Phi)|\partial_{s}\Phi=0,\partial_{\eta}\Phi=0\}\subset T^{*}\mathbb{R}^{3}\setminus 0.
\end{equation}
Let $\pi:\Lambda_{\Phi}\rightarrow\mathbb{R}^{3}$ be the natural projection and let $\Sigma$ denote the set of its singular points. 
The points where the Jacobian of $d\pi$ vanishes lie over the caustic set, thus the fold set is given by $\Sigma=\{s=0\}$ and the caustic is defined by $\pi(\Sigma)=\{x+(1-\frac{\tau^{2}}{\eta^{2}})=0\}$.

If on the boundary we are localized away from the caustic set $\pi(\Sigma)$, $\Lambda_{\Phi|_{x=0}}$ is the graph of a pair of canonical transformations, the billiard ball maps $\delta^{\pm}$. 
Roughly speaking, the billiard ball maps $\delta^{\pm}:T^{*}(\mathbb{R}\times\partial\Omega)\rightarrow T^{*}(\mathbb{R}\times\partial\Omega)$, defined on the hyperbolic region, continuous up to the boundary, smooth in the interior, are defined at a point of $T^{*}(\mathbb{R}\times\partial\Omega)$ by taking the two rays that lie over this point, in the hypersurface $\text{Char}(p)$, and following the null bicharacteristic through these points until you pass over $\{x=0\}$ again, projecting such a point onto $T^{*}(\mathbb{R}\times\partial\Omega)$ (a gliding point being "a diffractive point viewed from the other side of the boundary", there is no bicharacteristic in $T^{*}(\mathbb{R}\times\partial\Omega)$ through it, but in any neighborhood of a gliding point there are hyperbolic points).

In our model case the analysis is simplified by the presence of a large \emph{commutative} group of symmetries, the translations in $(y,t)$, and the billiard ball maps have specific formulas
\begin{equation}\label{billball}
\delta^{\pm}(y,t,\eta,\tau)=\Big(y\pm4(\frac{\tau^{2}}{\eta^{2}}-1)^{1/2}\pm\frac{8}{3}(\frac{\tau^{2}}{\eta^{2}}-1)^{3/2},t\mp 4(\frac{\tau^{2}}{\eta^{2}}-1)^{1/2}\frac{\tau}{\eta},\eta,\tau\Big).
\end{equation}
Away from $\pi(\Sigma)$ these maps have no recurrent points, since under iteration
$t((\delta^{\pm})^{n})\rightarrow\pm\infty$ as $n \rightarrow \infty$. 
The composite relation with $n$ factors
\[
\Lambda_{\Phi|_{x=0}}\circ...\circ\Lambda_{\Phi|_{x=0}}
\]
has, always away from $\pi(\Sigma)$, $n+1$ components, obtained namely using the graphs of the iterates
$(\delta^{+})^{n}, (\delta^{+})^{n-2},..,(\delta^{-})^{n}$,
\begin{equation}\label{deltait}
(\delta^{\pm})^{n}(y,t,\eta,\tau)=\Big(y\pm4n(\frac{\tau^{2}}{\eta^{2}}-1)^{1/2}\pm\frac{8}{3}n(\frac{\tau^{2}}{\eta^{2}}-1)^{3/2},t\mp 4n(\frac{\tau^{2}}{\eta^{2}}-1)^{1/2}\frac{\tau}{\eta},\eta,\tau\Big).
\end{equation}
All these graphs, of the powers of $\delta^{\pm}$, are disjoint away from $\pi(\Sigma)$ and locally finite, in the sense that only a finite number of components meet any compact subset of $\{\frac{\tau^{2}}{\eta^{2}}-1>0\}$. 
Since $(\delta^{\pm})^{n}$ are all immersed canonical relations, it is necessary to find a parametrization of each to get at least microlocal representations of the associated Fourier integral operators. We see that
\[
y\eta+t\tau+\frac{4}{3}\eta(\frac{\tau^{2}}{\eta^{2}}-1)^{3/2}, 
\]
are parametrizations of $\Lambda_{\Phi|_{x=0}}$, thus the iterated Lagrangians $(\Lambda_{\Phi|_{x=0}})^{\circ n}$ are parametrized by
\[
y\eta+t\tau+\frac{4}{3}n\eta(\frac{\tau^{2}}{\eta^{2}}-1)^{3/2},
\]
and the corresponding phase functions associated to $(\Lambda_{\Phi})^{\circ n}$ will be given by
\[
\Phi_{n}=\Phi+\frac{4}{3}n\eta (\frac{\tau^{2}}{\eta^{2}}-1)^{3/2}.
\] 

Let us come back to the wave equation \eqref{ondes} and describe the approximate solution we want to chose. The domain $\Omega$ being strictly convex, at each point on the boundary there exists a bicharacteristic that intersects the boundary $\mathbb{R}\times\partial\Omega$ tangentially having exactly second order contact with the boundary and remaining in the complement of $\mathbb{R}\times\bar{\Omega}$. Here we deal with $\gamma(\rho)=x$ and \eqref{hyconv2} translates into $x=\xi=0$, $|\tau|=|\eta|>0$. Let 
\[
(\rho_{0},\zeta_{0})=(0,0,0,0,1,-1)\in T^{*}(\mathbb{R}\times\partial\Omega).
\]
We shall place ourself in the region $\mathcal{V}_{a}$ near $(\rho_{0},\zeta_{0})$,
\[
\mathcal{V}_{a}=\{(\rho,\zeta)|\xi^{2}+(1+x)\eta^{2}-\tau^{2}=0, x=0,\tau^{2}=(1+a)\eta^{2}\},
\]
where $a=h^{\delta}$, $0<\delta<2/3$ will be chosen later and $\eta$ belongs to a neighborhood of $1$. Notice that, in some sense, $a$ measures the "distance" to the gliding point $(\rho_{0},\zeta_{0})$. 

Let $u_{h}$ be defined by
\begin{equation}\label{integral}
u_{h}(t,x,y)=\int e^{\frac{i}{h}(y\eta-t(1+a)^{1/2}\eta+(x-a)\xi+\frac{\xi^{3}}{3\eta^{2}})}g(t,\xi/\eta,h)\Psi(\eta)/\eta d\xi d\eta, 
\end{equation}
where the symbol $g$ is a smooth function independent of $x$, $y$ and where $\Psi\in C^{\infty}_{0}(\mathbb{R}^{*})$ is supported for $\eta$ in a small neighborhood of $1$, $0\leq\Psi(\eta)\leq 1$, $\Psi(\eta)=1$ for $\eta$ near $1$. We consider the sum
\[
U_{h}(t,x,y)=\sum_{n=0}^{N}u^{n}_{h}(t,x,y),\quad u^{n}_{h}(t,x,y)=\int e^{\frac{i}{h}\Phi_{n}}g^{n}(t,s,\eta,h)\Psi(\eta)d\eta ds,
\]
where $\Phi_{n}=\Phi+\frac{4}{3}n\eta a^{3/2}$ are the phase functions defined above such that $\Lambda_{\Phi_{n}}=(\Lambda_{\Phi})^{\circ n}$ and where the symbols $g^{n}$ will be chosen such that on the boundary the Dirichlet condition to be satisfied. At $x=0$ the phases have two critical, non-degenerate points, thus each $u^{n}_{h}$ writes as a sum of two trace operators, $Tr_{\pm}(u^{n}_{h})$, localized respectively for $y-(1+a)^{1/2}t+\frac{4}{3}na^{3/2}$ near $\pm\frac{2}{3}na^{3/2}$, and in order to obtain a contribution $O_{L^{2}}(h^{\infty})$ on the boundary we define the symbols such that $Tr_{-}(g^{n})+Tr_{+}(g^{n+1})=O_{L^{2}}(h^{\infty})$. This will be possible by Egorov theorem, as long as $N\ll a^{3/2}/h$. This last condition, together with the assumption of finite time (which implies $0<N(\frac{\tau^{2}}{\eta^{2}}-1)^{1/2}<\infty$) allows to estimate the number of reflections $N$.

The motivation of this construction comes from the fact that near the caustic set $\pi(\Sigma)$ one notices a singularity of cusp type for which one can estimate the $L^{r}(\Omega)$ norms. Moreover, if  at $t=0$ one considers symbols localized in a small neighborhood of the caustic set, then one can show that the respective "pieces of cusps" propagate until they reach the boundary but short after that their contribution becomes $O_{L^{2}}(h^{\infty})$, since as $t$ increases, $s$ takes greater values too and thus one quickly quits a neighborhood of the Lagrangian $\Lambda_{\Phi}$ which contains the semi-classical wave front set $WF_{h}(u_{h})$ of $u_{h}$. This argument is valid for all $u^{n}_{h}$, thus the approximate solutions  $u^{n}_{h}$ will have almost disjoints supports and the $L^{q}([0,1],L^{r}(\Omega))$ norms of the sum $U_{h}$ will be computed as the sum of the norms of each $u^{n}_{h}$ on small intervals of time of size $a^{1/2}$. 

\begin{figure}
\begin{center}
\includegraphics[width=15cm]{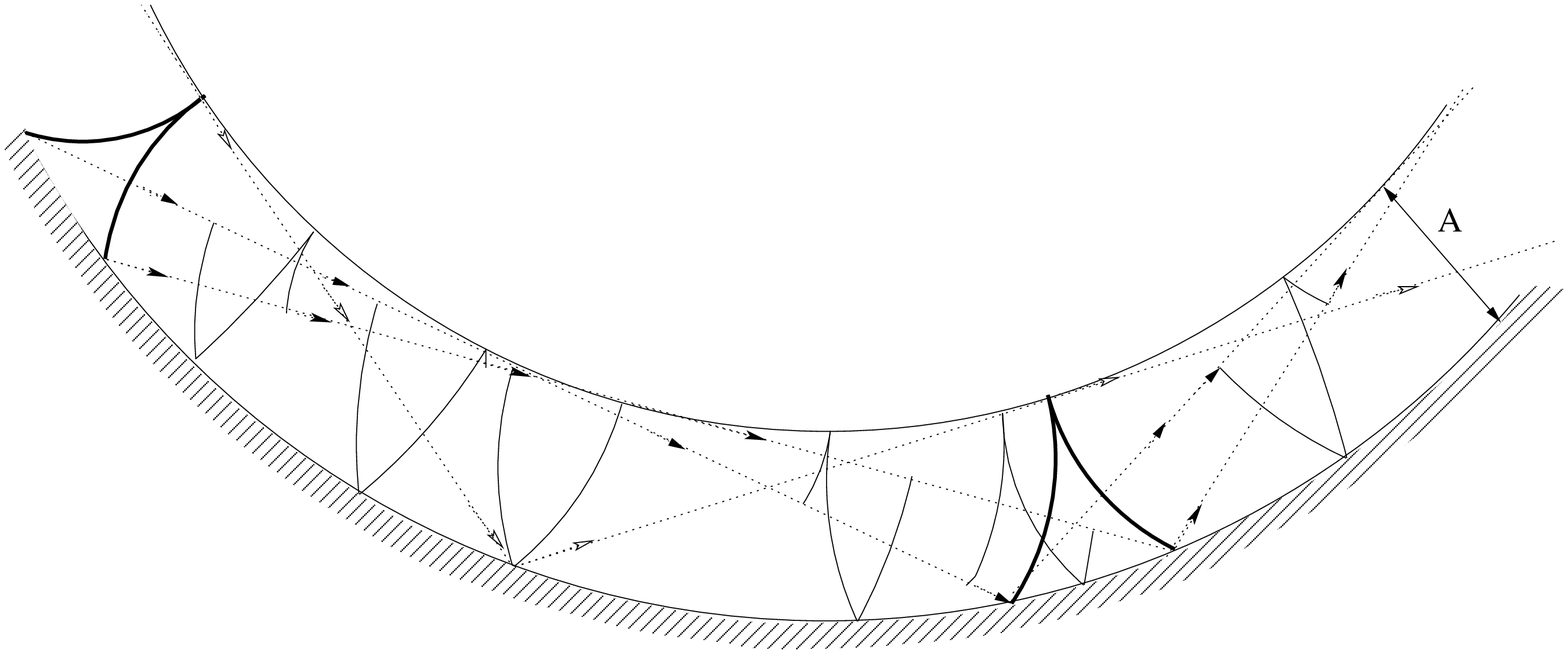}
\caption{Propagation of the cusp. \emph{A caustic is defined as the envelope of the rays which appear in a given problem: each ray is tangent to the caustic at a given point. If one assigns a direction on the caustic, it induces a direction on each ray. Each point outside the caustic lies on a ray which has left the caustic and also lies on a ray approaching the caustic. Each curve of constant phase has a cusp where it meets the caustic. }}
\label{fig}
\end{center}
\end{figure}

\subsection{Choice of the symbol}
Let $a=h^{\delta}$, $0<\delta<2/3$ to be chosen and let $u_{h}$ be given by the formula  \eqref{integral}. Applying the wave operator $\square$ to $u_{h}$ gives:
\begin{multline}\label{apros}
h^{2}\Box u_{h}=\int
e^{\frac{i}{h}\Phi}\Big(h^{2}\partial^{2}_{t}g-2ih\eta(1+a)^{1/2}\partial_{t}g+
\eta^{2}(x-a+s^{2})g\Big)\Psi(\eta)ds d\eta\\
=\int
e^{\frac{i\eta}{h}(y-t(1+a)^{1/2}+s(x-a)+\frac{s^{3}}{3})}\Big(h^{2}\partial^{2}_{t}g+ih\eta(\partial_{s}g
-2(1+a)^{1/2}\partial_{t}g)\Big)\eta\Psi(\eta)ds d\eta.
\end{multline}
\begin{dfn}\label{dfns}
Let $\lambda\geq 1$. For a given compact $K\subset\mathbb{R}$ we define the space $\mathcal{S}_{K}(\lambda)$, consisting of functions $\varrho(z,\lambda)\in C^{\infty}(\mathbb{R})$ which satisfy
\begin{enumerate}
\item  $\sup_{z\in\mathbb{R},\lambda\geq 1}|\partial^{\alpha}_{z}\varrho(z,\lambda)|\leq C_{\alpha}$, where $C_{\alpha}$ are constants independent of $\lambda$,
\item If $\psi(z)\in C^{\infty}_{0}$ is a smooth function equal to $1$ in a neighborhood of $K$, $0\leq\psi\leq 1$ then $(1-\psi)\varrho\in O_{\mathcal{S}(\mathbb{R})}(\lambda^{-\infty})$.
\end{enumerate}
An example of function $\varrho(z,\lambda)\in \mathcal{S}_{K}(\lambda)$, $K\subset\mathbb{R}$ is the following: let $k(z)$ be the smooth function on $\mathbb{R}$ defined by
\[
k(z)=\left\{
                \begin{array}{ll}
               c\exp{(-1/(1-|z|^{2}))},\quad\ if\ |z|<1,\\
               0,\quad \ if \ |z| \geq 1,
                \end{array}
                \right.
\]
where $c$ is a constant chosen such that $\int_{\mathbb{R}}k(z)dz=1$. Define a mollifier $k_{\lambda}(z):=\lambda k(\lambda z)$ and let $\tilde{\varrho}\in C^{\infty}_{0}(K)$ be a smooth function with compact support included in $K$. If we set $\varrho(z,\lambda)=(\tilde{\varrho}*k_{\lambda})(z)$, then one can easily check that $\varrho$ belongs to $\mathcal{S}_{K}(\lambda)$.
\end{dfn}
Let $\lambda=\lambda(h)=h^{3\delta/2-1}$, $K_{0}=[-c_{0},c_{0}]$ for some small $0<c_{0}<1$ and let $\varrho(.,\lambda)\in \mathcal{S}_{K_{0}}(\lambda)$ be a smooth function. We define
\begin{equation}\label{gforme}
g(t,s,h)=\varrho(\frac{t+2(1+a)^{1/2}s}{2(1+a)^{1/2}a^{1/2}},\lambda).
\end{equation}
Notice that $t+2(1+a)^{1/2}s$ is an integral curve of the vector field
 $\partial_{s}-2(1+a)^{1/2}\partial_{t}$, thus inserting \eqref{gforme} in \eqref{apros} gives
\begin{equation}\label{boxx}
\Box u_{h}=h^{-\delta}(4(1+a))^{-1}\int e^{\frac{i}{h}\Phi}\partial^{2}_{1}\varrho(\frac{t+2(1+a)^{1/2}s}{2(1+a)^{1/2}a^{1/2}},\lambda)\Psi(\eta)dsd\eta.
\end{equation}

\subsection{The boundary condition}\label{bdcond}
We compute $u_{h}$ on the boundary. We make the change of variables $s=a^{1/2}v$ in the integral defining $u_{h}(t,0,y)$ and set $z=\frac{t}{2(1+a)^{1/2}a^{1/2}}$. Then
\begin{multline}
u_{h}(t,0,y)
=a^{1/2}\int_{\eta}\frac{\eta\lambda}{2\pi}e^{\frac{i\eta}{h}(y-t(1+a)^{1/2})}\times\\
\times\Big(\int_{\zeta}\int_{v}
e^{i\eta\lambda(\frac{v^{3}}{3}-v(1-\zeta))}dv\int_{z'} e^{i\eta\lambda(z-z')\zeta}\varrho(z',\lambda)dz'd\zeta\Big)\Psi(\eta)d\eta\\
=a^{1/2}\int_{\eta} (\eta\lambda)^{2/3}e^{\frac{i\eta}{h}(y-t(1+a)^{1/2})}\Psi(\eta)\times\\
\times \int_{\zeta,z'}e^{i\eta\lambda(z-z')\zeta}Ai(-(\eta\lambda)^{2/3}(1-\zeta))\varrho(z',\lambda)dz'd\zeta d\eta.
\end{multline}
For $\eta\in \text{supp}(\Psi)$ we introduce
\begin{equation}\label{opin}
I(\varrho(.,\lambda))_{\eta}(z,\lambda):=\frac{(\eta\lambda)^{7/6}}{2\pi}\int_{\zeta,z'}e^{i\eta\lambda(z-z')\zeta}Ai(-(\eta\lambda)^{2/3}(1-\zeta))\varrho(z',\lambda)dz'd\zeta,
\end{equation}
then 
\begin{equation}\label{fopin}
\Psi(\eta)(I(\varrho(.,\lambda))_{\eta})^{\wedge}(\eta\lambda\zeta,\lambda)=(\eta\lambda)^{1/6}\Psi(\eta)Ai(-(\eta\lambda)^{2/3}(1-\zeta))\hat{\varrho}(\eta\lambda\zeta,\lambda).
\end{equation}
The next Lemma shows that the symbol of the operator defined in \eqref{opin} is localized for $\zeta$ as close as we want to $0$.
\begin{lemma}\label{lem3}
For $\varrho(.,\lambda)\in \mathcal{S}_{K}(\lambda)$ for some compact $K$ then $(I(\varrho)_{\eta})^{\wedge}(.,\lambda)$ defined by \eqref{opin}, \eqref{fopin} is localized near $\zeta=0$, more precisely, if $\chi$ is a smooth function with support included in a small neighborhood  $(-2c,2c)$ of $0$, $0<c\leq 1/4$, $\chi|_{[-c,c]}=1$, $0\leq\chi\leq 1$, then we have for $\eta\in \text{supp}(\Psi)$
\begin{multline}
I(\varrho(.,\lambda))_{\eta}(z,\lambda)
=\frac{(\eta\lambda)^{7/6}}{2\pi}\int_{\zeta,z'}e^{i\eta\lambda(z-z')\zeta}Ai(-(\eta\lambda)^{2/3}(1-\zeta))\times\\
\times\chi(\zeta)\varrho(z',\lambda)dz'd\zeta+O_{\mathcal{S}(\mathbb{R})}((\eta\lambda)^{-\infty}).
\end{multline}
\end{lemma}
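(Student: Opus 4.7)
The plan is to pass to the Fourier side and use the explicit identity \eqref{fopin}. Denote by
\[
R(z):=I(\varrho(\cdot,\lambda))_{\eta}(z)-\frac{(\eta\lambda)^{7/6}}{2\pi}\int_{\zeta,z'}e^{i\eta\lambda(z-z')\zeta}Ai\!\left(-(\eta\lambda)^{2/3}(1-\zeta)\right)\chi(\zeta)\varrho(z',\lambda)\,dz'\,d\zeta
\]
the remainder to be estimated. Repeating the computation of \eqref{fopin}, with $\chi(\zeta)$ inserted, gives for $\eta\in\mathrm{supp}(\Psi)$
\[
\widehat{R}(\tau)=(\eta\lambda)^{1/6}\,Ai\!\left(-(\eta\lambda)^{2/3}\bigl(1-\tfrac{\tau}{\eta\lambda}\bigr)\right)\bigl(1-\chi(\tfrac{\tau}{\eta\lambda})\bigr)\,\hat{\varrho}(\tau,\lambda),
\]
so the lemma reduces to showing that every Schwartz seminorm of $\widehat{R}$ is $O((\eta\lambda)^{-\infty})$; inverting the Fourier transform then yields $R\in O_{\mathcal{S}(\mathbb{R}_z)}((\eta\lambda)^{-\infty})$, as required. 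Note that the support of $1-\chi(\cdot/(\eta\lambda))$ is contained in $\{|\tau|\geq c\,\eta\lambda\}$, so I need to exploit the high-frequency behavior of $\hat{\varrho}$ on that set.

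The key step is to control $\hat{\varrho}(\tau,\lambda)$ on $|\tau|\geq c\,\eta\lambda$, together with all its $\tau$-derivatives. Following Definition \ref{dfns}, I split $\varrho=\psi\varrho+(1-\psi)\varrho$ with $\psi\in C^{\infty}_{0}$ equal to $1$ on a neighborhood of $K$. By the second property of $\mathcal{S}_{K}(\lambda)$, the second term is $O_{\mathcal{S}(\mathbb{R})}(\lambda^{-\infty})$ and its Fourier transform is $O_{\mathcal{S}(\mathbb{R}_\tau)}(\lambda^{-\infty})$; it contributes an acceptable error. For the first term, $(z')^{\alpha}\partial_{z'}^{\beta}(\psi\varrho)$ is supported in the fixed compact $\mathrm{supp}(\psi)$ with $L^{\infty}$-norm uniformly bounded in $\lambda$ (the $\varrho$-derivatives being uniformly bounded by the first property of $\mathcal{S}_{K}(\lambda)$). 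The standard integration-by-parts bound for Fourier transforms then yields
\[
\bigl|\tau^{N}\,\partial_{\tau}^{\alpha}\widehat{\psi\varrho}(\tau,\lambda)\bigr|\leq C_{N,\alpha}
\]
uniformly in $\lambda$, so on $|\tau|\geq c\,\eta\lambda$ each derivative $\partial_{\tau}^{\alpha}\hat{\varrho}(\tau,\lambda)$ is $O_{\alpha,N}((\eta\lambda)^{-N})$ for every $N\geq 1$.

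It remains to verify that the Airy factor and the cutoff behave harmlessly under differentiation in $\tau$. The uniform bounds $|Ai^{(j)}(s)|\leq C_{j}$ on the real line (oscillating with $|s|^{-1/4}$ decay as $s\to-\infty$, exponentially decaying as $s\to+\infty$) combined with the chain rule give
\[
\bigl|\partial_{\tau}^{j}Ai\!\left(-(\eta\lambda)^{2/3}(1-\tau/(\eta\lambda))\right)\bigr|\leq C_{j}(\eta\lambda)^{-j/3},
\]
and clearly $|\partial_{\tau}^{j}(1-\chi(\tau/(\eta\lambda)))|\leq C_{j}(\eta\lambda)^{-j}$. Expanding $\partial_{\tau}^{\alpha}\widehat{R}$ by Leibniz and multiplying by $(1+|\tau|)^{M}$, each resulting term is a product of one of these bounded factors times $\partial_{\tau}^{\gamma}\hat{\varrho}(\tau,\lambda)$; the rapid decay of the latter dominates the $(\eta\lambda)^{1/6}$ prefactor and gives that every Schwartz seminorm of $\widehat{R}$ is $O((\eta\lambda)^{-\infty})$. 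The only mildly delicate point I anticipate is checking the uniform-in-$\lambda$ Schwartz bound on $\widehat{\psi\varrho}$ above, but once $\psi$ is fixed this is immediate from the two conditions defining $\mathcal{S}_{K}(\lambda)$; no genuinely hard estimate is needed beyond the standard Fourier integration by parts and the pointwise bounds on Airy.
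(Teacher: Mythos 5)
Your proposal is correct and follows essentially the same route as the paper: pass to the Fourier side via \eqref{fopin}, note that $1-\chi$ localizes to $|\zeta|=|\tau|/(\eta\lambda)\geq c$, split $\varrho=\psi\varrho+(1-\psi)\varrho$, discard the second piece by the definition of $\mathcal{S}_{K}(\lambda)$, and kill the first by integration by parts in $z'$ (equivalently, the uniform rapid decay of $\widehat{\psi\varrho}$ at frequencies $\gtrsim\eta\lambda$), with the Airy and cutoff factors entering only through harmless bounds. The one slip is the claim that $|Ai^{(j)}(s)|\leq C_{j}$ on all of $\mathbb{R}$ — by \eqref{air} the derivatives grow like $|s|^{(2j-1)/4}$ as $s\to-\infty$, so your stated bound $C_{j}(\eta\lambda)^{-j/3}$ on $\partial_{\tau}^{j}Ai(-(\eta\lambda)^{2/3}(1-\tau/(\eta\lambda)))$ is not right in form — but the correct bound is still $O(1)$ (polynomially bounded in $\eta\lambda$), which is all the argument requires.
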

\begin{proof}
Let $\varrho(.,\lambda)\in \mathcal{S}_{K}(\lambda)$. If we set, for $\eta\in \text{supp}(\Psi)$
\[
J(\varrho(.,\lambda))_{\eta}(z,\lambda):=\frac{(\eta\lambda)^{7/6}}{2\pi}\int_{\zeta,z'}e^{i\eta\lambda(z-z')\zeta}Ai(-(\eta\lambda)^{2/3}(1-\zeta))(1-\chi(\zeta))\varrho(z',\lambda)dz'd\zeta
\]
we need to prove the following
\[
\Psi(\eta)J(\varrho(.,\lambda))_{\eta}(z,\lambda)\in \Psi(\eta) O_{\mathcal{S}(\mathbb{R}_{z})}((\eta\lambda)^{-\infty})=O_{\mathcal{S}(\mathbb{R}_{z})}(\lambda^{-\infty}),
\]
which is the same as to show that $(J(\varrho))^{\wedge}_{\eta}(\xi,\lambda)\in O_{\mathcal{S}(\mathbb{R}_{\xi})}(\lambda^{-\infty})$ or equivalently that 
\begin{equation}\label{compexplitop}
\Psi(\eta)J(\varrho)^{\wedge}_{\eta}(\eta\lambda\zeta,\lambda)\in \Psi(\eta) O_{\mathcal{S}(\mathbb{R}_{\zeta})}((\eta\lambda)^{-\infty}).
\end{equation}
In order to prove \eqref{compexplitop} we first compute $(J(\varrho))^{\wedge}_{\eta}(\eta\lambda\zeta,\lambda)$ explicitly:
\begin{multline}\label{psijrofour}
\Psi(\eta)(J(\varrho(.,\lambda)))^{\wedge}_{\eta}(\eta\lambda\zeta,\lambda)
=\\=(\eta\lambda)^{1/6}\Psi(\eta)
 Ai(-(\eta\lambda)^{2/3}(1-\zeta))(1-\chi(\zeta))\hat{\varrho}(\eta\lambda\zeta,\lambda)
\end{multline}
It remains to show that the right hand side of \eqref{psijrofour} belongs to $\Psi(\eta)O_{\mathcal{S}(\mathbb{R}_{z})}((\eta\lambda)^{-\infty})$. Notice that this will conclude the proof of the Lemma \ref{lem3}. If $\chi(\zeta)\neq 1$ then $\zeta$ lies outside a neighborhood of $0$, $|\zeta|\geq c$ and for $\eta\in \text{supp} (\Psi)$ we can perform integrations by parts in the integral defining $\hat{\varrho}(\eta\lambda\zeta,\lambda)$:
\begin{multline}\label{four}
\Psi(\eta)\hat{\varrho}(\eta\lambda\zeta,\lambda)=\Psi(\eta)\int_{z'}e^{-i\eta\lambda\zeta z'}\varrho(z',\lambda)dz'=\\=\frac{\Psi(\eta)}{(i\eta\lambda\zeta)^{m}}\int_{z'}e^{-i\eta\lambda\zeta z'}\partial^{m}_{z'}\varrho(z',\lambda)dz'.
\end{multline}
Writing $\varrho(z',\lambda)=\psi(z')\varrho(z',\lambda)+(1-\psi(z'))\varrho(z',\lambda)$ for some smooth cutoff function $\psi$ equal to $1$ on $K$ and using that  $\|\partial^{m}_{z'}(\psi\varrho)(.,\lambda)\|_{L^{\infty}(\mathbb{R})}\leq C'_{m}$ for some constants $C'_{m}$ independent of $\lambda$ and that, on the other hand $\partial^{m}_{z'}((1-\psi)\varrho(.,\lambda))\in O_{\mathcal{S}(\mathbb{R})}(\lambda^{-\infty})$, we deduce the desired result.
\end{proof}

In what follows we use the results in Section \ref{secairy} of the Appendix in order to write, for $\zeta$ close to $0$
\[
Ai(-(\eta\lambda)^{2/3}(1-\zeta))=A^{+}(-(\eta\lambda)^{2/3}(1-\zeta))+A^{-}(-(\eta\lambda)^{2/3}(1-\zeta))
\]
where $A^{\pm}$ have the following asymptotic expansions
\begin{multline}
A^{\pm}(-(\eta\lambda)^{2/3}(1-\zeta))\simeq (\eta\lambda)^{-1/6}(1-\zeta)^{-1/4}\times\\ \times e^{\mp\frac{2i}{3}\eta\lambda(1-\zeta)^{3/2}\pm\frac{i\pi}{2}-\frac{i\pi}{4}}(\sum_{j\geq 0}\frac{a_{\pm,j}(-1)^{-j/2}(1-\zeta)^{-3j/2}}{(\eta\lambda)^{j}}).
\end{multline}
We obtain two contributions in $I(\varrho(.,\lambda))_{\eta}(.,\lambda)$ which we denoted
\begin{equation}
\frac{(\eta\lambda)^{7/6}}{2\pi}\int_{\zeta,z'}e^{i\eta\lambda(z-z')\zeta}A^{\pm}(-(\eta\lambda)^{2/3}(1-\zeta))\chi(\zeta)\varrho(z',\lambda)dz'd\zeta.
\end{equation}
We can summarize the preceding results as follows:
\begin{prop}\label{bound}
On the boundary $u_{h}|_{x=0}$ writes (modulo $O_{\mathcal{S}(\mathbb{R})}(\lambda^{-\infty})$) as a sum of two trace operators,
\begin{equation}\label{boun}
u_{h}(t,0,y)=Tr_{+}(u_{h})(t,y;h)+Tr_{-}(u_{h})(t,y;h),
\end{equation}
where, for $z=\frac{t}{2(1+a)^{1/2}a^{1/2}}$,
\begin{equation}
Tr_{\pm}(u_{h})(t,y;h)=2\pi\sqrt{\frac{a}{\lambda}}\int
e^{\frac{i\eta}{h}(y-t(1+a)^{1/2}\mp\frac{2}{3}a^{3/2})}\eta^{-1/2}\Psi(\eta)I_{\pm}(\varrho)_{\eta}(z,\lambda)d\eta,
\end{equation}
\begin{multline}
I_{\pm}(\varrho(.,\lambda))_{\eta}(z,\lambda)=e^{\pm i\pi/2-i\pi/4}\frac{\eta\lambda}{2\pi}\int_{\zeta,z'}e^{i\eta\lambda(\zeta(z-z')\mp\frac{2}{3}((1-\zeta)^{3/2}-1))}\times\\ \times \chi(\zeta)a_{\pm}(\zeta,\eta\lambda)\varrho(z',\lambda)dz'd\zeta,
\end{multline}
and where $a_{\pm}$ are the symbols of the Airy functions $A^{\pm}$ 
\begin{equation}\label{simsim}
a_{\pm}(\zeta,\eta\lambda)\simeq(1-\zeta)^{-1/4}(\sum_{j\geq 0}\frac{a_{\pm,j}(-1)^{-j/2}(1-\zeta)^{-3j/2}}{(\eta\lambda)^{j}}),
\end{equation}
where $a_{\pm,j}$ are given in \eqref{simai}.
\end{prop}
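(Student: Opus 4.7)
The plan is to turn the explicit expression for $u_h(t,0,y)$ displayed just before the statement into the claimed decomposition through three steps: localize the $\zeta$-integration near $0$, split the Airy function into its two oscillatory branches, and extract from each branch the purely $h$-oscillating factor that produces the two separate trace phases.

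First, I would apply Lemma \ref{lem3} to insert the cutoff $\chi(\zeta)$, which modifies the $\zeta$-integration in
\[
u_h(t,0,y) = a^{1/2}\!\int e^{\frac{i\eta}{h}(y-t(1+a)^{1/2})}(\eta\lambda)^{2/3}\Psi(\eta)\!\int\!\!\int e^{i\eta\lambda(z-z')\zeta} Ai\bigl(-(\eta\lambda)^{2/3}(1-\zeta)\bigr)\varrho(z',\lambda)\,dz'd\zeta\,d\eta
\]
at the cost of an error $O_{\mathcal{S}(\mathbb{R})}(\lambda^{-\infty})$, which is precisely the exceptional remainder in the proposition. Since $\chi$ is supported near $0$ and $\eta\in\mathrm{supp}(\Psi)$ is bounded away from $0$, the Airy argument $-(\eta\lambda)^{2/3}(1-\zeta)$ tends to $-\infty$ as $h \to 0$, so the splitting $Ai = A^+ + A^-$ together with the asymptotic expansions recalled in Section \ref{secairy} of the Appendix applies, giving
\[
A^\pm\bigl(-(\eta\lambda)^{2/3}(1-\zeta)\bigr) \simeq (\eta\lambda)^{-1/6}\,e^{\mp\frac{2i}{3}\eta\lambda(1-\zeta)^{3/2}\pm\frac{i\pi}{2}-\frac{i\pi}{4}}\,a_\pm(\zeta,\eta\lambda),
\]
with $a_\pm$ exactly the symbol in \eqref{simsim}. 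Substituting splits $u_h|_{x=0}$ into the two contributions $u_h^\pm(t,0,y)$ that are candidates for $Tr_\pm(u_h)$.

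To bring these contributions into the stated form I would split $(1-\zeta)^{3/2} = 1 + [(1-\zeta)^{3/2}-1]$, so that
\[
e^{\mp\frac{2i}{3}\eta\lambda(1-\zeta)^{3/2}} = e^{\mp\frac{2i}{3}\eta\lambda}\cdot e^{\mp\frac{2i}{3}\eta\lambda[(1-\zeta)^{3/2}-1]},
\]
and invoke the key identity $\lambda = a^{3/2}/h$, which follows from $\lambda = h^{3\delta/2-1}$ and $a = h^\delta$. Then $e^{\mp\frac{2i}{3}\eta\lambda} = e^{\mp\frac{2i\eta}{3h}a^{3/2}}$ merges with the outer exponential $e^{i\eta(y-t(1+a)^{1/2})/h}$ to produce precisely the phase $e^{\frac{i\eta}{h}(y-t(1+a)^{1/2}\mp\frac{2}{3}a^{3/2})}$ of the statement. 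The remaining factor $e^{\mp\frac{2i}{3}\eta\lambda[(1-\zeta)^{3/2}-1]}\chi(\zeta)a_\pm(\zeta,\eta\lambda)$ together with the constant $e^{\pm i\pi/2 - i\pi/4}$ defines $I_\pm(\varrho)_\eta(z,\lambda)$, and a direct check of numerical prefactors closes the identification: the outer $\eta$-weight emerging from the computation is $a^{1/2}(\eta\lambda)^{2/3-1/6} = a^{1/2}(\eta\lambda)^{1/2}$, while combining $2\pi\sqrt{a/\lambda}\,\eta^{-1/2}$ with the $\frac{\eta\lambda}{2\pi}$ inside $I_\pm$ produces $a^{1/2}(\eta\lambda)^{1/2}$ as well.

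There is no genuine obstacle: the proof is a careful unfolding of asymptotic expansions. The only point requiring attention is that all error terms -- from Lemma \ref{lem3}, from the Airy asymptotic and from each term of the symbol sum defining $a_\pm$ -- remain $O_{\mathcal{S}(\mathbb{R})}(\lambda^{-\infty})$. This is automatic because $\Psi$ has compact support away from $0$ and $\varrho(\cdot,\lambda)\in\mathcal{S}_{K_0}(\lambda)$, so the Fourier transform $\hat\varrho(\eta\lambda\zeta,\lambda)$ is rapidly decreasing in $\lambda$ outside a small neighborhood of $\zeta=0$, as was already exploited in the proof of Lemma \ref{lem3}.
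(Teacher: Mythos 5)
Your proposal is correct and follows the same route as the paper: the paper obtains Proposition \ref{bound} by exactly this sequence — the explicit boundary formula after the change of variables $s=a^{1/2}v$, Lemma \ref{lem3} to insert $\chi(\zeta)$ modulo $O_{\mathcal{S}(\mathbb{R})}(\lambda^{-\infty})$, the splitting $Ai=A^{+}+A^{-}$ with the asymptotics \eqref{simai}, and the factorization $e^{\mp\frac{2i}{3}\eta\lambda(1-\zeta)^{3/2}}=e^{\mp\frac{2i}{3}\eta\lambda}e^{\mp\frac{2i}{3}\eta\lambda((1-\zeta)^{3/2}-1)}$ with $\lambda=a^{3/2}/h$ absorbing the constant exponential into the outer phase. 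Your prefactor bookkeeping ($a^{1/2}(\eta\lambda)^{2/3-1/6}=2\pi\sqrt{a/\lambda}\,\eta^{-1/2}\cdot\frac{\eta\lambda}{2\pi}$) also checks out.
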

We also need the next Lemma:
\begin{lemma}\label{lemlocal}
Let $p\in\mathbb{Z}$ and for some $0<c_{0}<1$ set $K_{p}=[-c_{0}+p,c_{0}+p]$. Then for $\eta$ belonging to the support of $\Psi$ we have $I_{\pm,\eta}:\mathcal{S}_{K_{p}}(\lambda)\rightarrow\mathcal{S}_{K_{p\mp1}}(\lambda)$.
\end{lemma}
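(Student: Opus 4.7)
The plan is to view $I_{\pm,\eta}(\varrho)(z,\lambda)$ as an oscillatory integral with large parameter $\eta\lambda$ (recall $\eta\in\mathrm{supp}(\Psi)$ is bounded away from $0$). The phase
\[
\Psi_\pm(z,z',\zeta)=\zeta(z-z')\mp\tfrac{2}{3}\bigl((1-\zeta)^{3/2}-1\bigr)
\]
satisfies $\partial_{z'}\Psi_\pm=-\zeta$ and $\partial_\zeta\Psi_\pm=(z-z')\pm(1-\zeta)^{1/2}$, so its unique critical point in $(\zeta,z')$ is $(\zeta_{*},z'_{*})=(0,z\pm 1)$, nondegenerate with Hessian of determinant $-1$ and signature $0$. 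At leading order $I_{\pm,\eta}$ therefore acts as the translation $\varrho(\cdot,\lambda)\mapsto c_\pm\,\varrho(\cdot\pm 1,\lambda)$, which precisely sends $K_p$ to $K_{p\mp 1}$; the whole task is to promote this leading-order picture to the symbol class $\mathcal{S}_{K_{p\mp 1}}(\lambda)$.

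I would first reduce to compactly supported input by writing $\varrho=\psi\varrho+(1-\psi)\varrho$ with $\psi\in C^\infty_0$ equal to $1$ on $K_p$ and of slightly larger support. The remainder $(1-\psi)\varrho$ belongs to $O_{\mathcal{S}(\mathbb{R})}(\lambda^{-\infty})$ by the definition of $\mathcal{S}_{K_p}(\lambda)$, and a direct integration by parts in $z'$ (using the oscillation $e^{-i\eta\lambda\zeta z'}$) shows that $I_{\pm,\eta}$ is continuous on $\mathcal{S}(\mathbb{R})$ uniformly in $\lambda$, so this piece is mapped into $O_{\mathcal{S}(\mathbb{R})}(\lambda^{-\infty})\subset\mathcal{S}_{K_{p\mp 1}}(\lambda)$. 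It remains to treat $\varphi:=\psi\varrho$, which has uniformly $\lambda$-bounded derivatives.

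For the localization (condition (2) of Definition \ref{dfns}), I would fix $\tilde\psi\in C^\infty_0$ equal to $1$ on a neighborhood of $K_{p\mp 1}$ and show that $I_{\pm,\eta}(\varphi)(z,\lambda)\in O_{\mathcal{S}(\mathbb{R})}(\lambda^{-\infty})$ for $z\notin\mathrm{supp}(\tilde\psi)$. For such $z$, together with $z'\in\mathrm{supp}(\varphi)$ and $\zeta\in\mathrm{supp}(\chi)$, the factor $|\partial_\zeta\Psi_\pm|=|(z-z')\pm 1+O(\zeta)|$ is bounded below by a positive constant comparable to $1+|z|$, provided $\mathrm{supp}(\chi)$ is taken small enough (which is permitted by Lemma \ref{lem3}) and $\mathrm{supp}(\psi)$ is close to $K_p$. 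Iterated integration by parts with the operator $\tfrac{1}{i\eta\lambda\,\partial_\zeta\Psi_\pm}\partial_\zeta$ then extracts arbitrarily many factors of $(\eta\lambda)^{-1}$ together with Schwartz-type decay in $z$, giving the required $O_{\mathcal{S}}(\lambda^{-\infty})$ bound. For the uniform derivative bounds (condition (1)), I would apply the stationary phase theorem at the critical point $(\zeta_{*},z'_{*})=(0,z\pm 1)$ to obtain an asymptotic expansion
\[
I_{\pm,\eta}(\varphi)(z,\lambda)\;\sim\;c_\pm\,\varphi(z\pm 1,\lambda)+\sum_{k\geq 1}(\eta\lambda)^{-k}D_k\varphi(z\pm 1,\lambda),
\]
where each $D_k$ is a linear differential operator with $\lambda$-independent smooth coefficients produced by Taylor expansion of $\Psi_\pm$, $\chi$ and the symbol $a_\pm$ from \eqref{simsim}. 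Every term, as well as the truncated remainder, has $z$-derivatives uniformly bounded in $\lambda$ thanks to the $\mathcal{S}_{K_p}(\lambda)$ bounds on $\varrho$.

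The main obstacle I anticipate is keeping the stationary-phase expansion uniform in $\lambda$ when the amplitude $a_\pm(\zeta,\eta\lambda)$ is itself a classical symbol of order $0$ in the same large parameter $\eta\lambda$; this is handled by expanding $a_\pm$ in powers of $(\eta\lambda)^{-1}$ via \eqref{simsim} and regrouping all contributions by total order in $(\eta\lambda)^{-1}$, while shrinking $\mathrm{supp}(\chi)$ and $\mathrm{supp}(\psi)$ so that the critical point lies strictly in the interior of the domain of integration and no spurious boundary contribution is generated.
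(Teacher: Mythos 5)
Your overall strategy mirrors the paper's: identify the nondegenerate critical point $(\zeta_*,z'_*)=(0,z\pm1)$ so that $I_{\pm,\eta}$ is, at leading order, the translation by $\mp1$; invoke Lemma \ref{lem3} to shrink the $\zeta$-support; split $\varrho=\psi\varrho+(1-\psi)\varrho$ and absorb the tail into $O_{\mathcal{S}}(\lambda^{-\infty})$; and, for the localization condition (2), integrate by parts in $\zeta$ using the lower bound $|\partial_\zeta\Psi_\pm|\gtrsim 1+|z|$ off the essential support. All of that matches the paper's proof. The one place where you diverge is condition (1): the paper bounds $\partial_z^\alpha I_\pm(\varrho)_\eta$ by inserting $(i\eta\lambda\zeta)^\alpha$ into the amplitude and splitting into $|\lambda\zeta|\le 2$ (where the substitution $\xi=\eta\lambda\zeta$ renders everything bounded) and $|\lambda\zeta|>2$ (where one integrates by parts in $z'$ to exploit the decay of $\hat\varrho$); you instead invoke a full stationary-phase expansion.

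Your stationary-phase version has a small but real gap you should close. You assert that ``the truncated remainder has $z$-derivatives uniformly bounded in $\lambda$,'' but that is not automatic: each $\partial_z$ hit on the oscillatory integral produces a factor $i\eta\lambda\zeta$ in the amplitude, and the naive remainder bound from H\"ormander's theorem then scales like $\lambda^{\alpha/2}$ because $\zeta$ is only of size $\lambda^{-1/2}$ near the critical point. The fix is either to notice the gain from the vanishing of $\zeta$ at the critical point (an amplitude vanishing to order $\alpha$ at $\zeta=0$ gains a factor $\lambda^{-\alpha}$ after stationary phase in $\zeta$), or — cleaner — to use the convolution structure of $I_{\pm,\eta}$: since the kernel depends only on $z-z'$, one has $\partial_z^\alpha I_{\pm,\eta}(\varrho)=I_{\pm,\eta}(\partial_z^\alpha\varrho)$, so the uniform bounds on $\partial_z^\alpha\varrho$ from the $\mathcal{S}_{K_p}(\lambda)$ assumption immediately give the uniform bounds on $\partial_z^\alpha I_{\pm,\eta}(\varrho)$. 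With either fix inserted, the argument is complete; without it, the claim about the remainder is merely asserted. As a comparison, the paper's direct dyadic split sidesteps stationary phase entirely and is more elementary; your stationary-phase route is fine and somewhat more conceptual (it explains \emph{why} the map is a translation modulo lower order), but it pays for that insight with the extra uniformity check just described.
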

\begin{proof}
The phase functions in $I_{\pm}(\varrho(.,\lambda))_{\eta}(z,\lambda)$ are given by 
\[
\eta\phi_{\pm}(z,z',\zeta)=\eta((z-z')\zeta\mp\frac{2}{3}((1-\zeta)^{3/2}-1)),
\]
with critical points satisfying
\[
\partial_{\zeta}\phi_{\pm}(z,z',\zeta)=z-z'\pm(1-\zeta)^{1/2}=0,\quad \partial_{z'}\phi_{\pm}(z,z',\zeta)=-\zeta=0.
\]
Outside small neighborhoods of $\zeta=0$ and $z'=z\pm(1-\zeta)^{1/2}$ we make integrations by parts in order to obtain a small contribution $\Psi(\eta)O_{\mathcal{S}(\mathbb{R})}((\eta\lambda)^{-\infty})$. Indeed, if we write
\begin{equation}
\Psi(\eta)I_{\pm}(\varrho(.,\lambda))_{\eta}(z,\lambda)=
\end{equation}
\[
=e^{\pm i\pi/2-i\pi/4}\Psi(\eta)\frac{\eta\lambda}{2\pi}\int_{\zeta,z'}e^{i\eta\lambda((z-z')\zeta\mp\frac{2}{3}((1-\zeta)^{3/2}-1))}a_{\pm}(\zeta,\eta\lambda)\chi(\zeta)\varrho(z',\lambda)dz'd\zeta,
\]
where $a_{\pm}$ are given in \eqref{simsim}, we have to check that the conditions of Definition \ref{dfns} are satisfied for $I_{\pm}(\varrho(.,\lambda))_{\eta}(z,\lambda)$ and $\mathcal{S}_{K_{p\mp1}}(\lambda)$:
\begin{itemize}
\item First we prove that for $\eta\in \text{supp}(\Psi)$
\begin{equation}\label{cond1}
\sup_{z\in\mathbb{R},\lambda\geq 1}|\partial^{\alpha}_{z}I_{\pm}(\varrho)_{\eta}(z,\lambda)|\leq C^{1}_{\alpha}.
\end{equation}
For $\eta\in \text{supp}(\Psi)$ we have
\begin{multline}
\Psi(\eta)\partial^{\alpha}_{z}I_{\pm}(\varrho)_{\eta}(z,\lambda)
=e^{\pm i\pi/2-i\pi/4}\Psi(\eta)\frac{\eta\lambda}{2\pi}\int_{\zeta,z'}e^{i\eta\lambda((z-z')\zeta\mp\frac{2}{3}((1-\zeta)^{3/2}-1))}\times\\ \times (i\eta\lambda\zeta)^{\alpha}a_{\pm}(\zeta,\lambda)\chi(\zeta)\varrho(z',\lambda)dz'd\zeta,
\end{multline}
and we shall split the integral in $\zeta$ in two parts, according to $\lambda\zeta\leq2$ or $\lambda\zeta>2$ for $\eta$ on the support of $\Psi$: in the first case there is nothing to do, the change of variables $\xi=\eta\lambda\zeta$ allowing to obtain bounds of type  \eqref{cond1}. In case $\lambda\zeta>2$ we make integrations by parts in the integral defining $\hat{\varrho}(\eta\lambda\zeta,\lambda)$ like in \eqref{four} in order to conclude.

\item Secondly, let $\psi_{\pm}$ be smooth cutoff functions equal to $1$ in small neighborhoods of $K_{p\mp 1}$ and such that $0\leq \psi_{\pm}\leq 1$. We prove that
\[
(1-\psi_{\pm}(z))\Psi(\eta)I_{\pm}(\varrho)_{\eta}(z,\lambda)=\Psi(\eta)O_{\mathcal{S}(\mathbb{R})}((\eta\lambda)^{-\infty}).
\]
Since $\psi_{\pm}$ equal to $1$ on some neighborhoods of  $K_{p\mp 1}$ there exist $c'>0$ small enough such that $\psi_{\pm}|_{[-c_{0}+p\mp1-5c',c_{0}+p\mp1+5c']}=1$.
Since $(I(\varrho)_{\eta})^{\wedge}$ is localized as close as we want to $\zeta=0$ then from the proof of Lemma \ref{lem3} we can find some (other) smooth function $\tilde{\chi}$ with support included in $(-2c',2c')$, equal to $1$ on $[-c',c']$ so that
\[
\Psi(\eta)(I(\varrho)_{\eta})^{\wedge}(\eta\lambda\zeta,\lambda)=\Psi(\eta)(I(\varrho)_{\eta})^{\wedge}(\eta\lambda\zeta,\lambda)\tilde{\chi}(\zeta)+\Psi(\eta)O_{\mathcal{S}(\mathbb{R})}((\eta\lambda)^{-\infty}).
\]
Let $\psi\in C^{\infty}_{0}$ with support included in $(-c_{0}+p-c',c_{0}+p+c')$. We split $\varrho=\psi\varrho+(1-\psi)\varrho$ and since $(1-\psi)\varrho(.,\lambda)$ belongs to $O_{\mathcal{S}(\mathbb{R})}(\lambda^{-\infty})$ it is enough to prove the preceding assertion with $\varrho$ replaced by $\psi\varrho$. On the support of $\psi\varrho$ we have
$|z'-p|\leq c_{0}+c'$
and on the support of $1-\psi_{\pm}$ we have $|z-p\pm1|>c_{0}+5c'$. On the other hand, if $c'$ is chosen small enough then on the support of $\psi$ we have $1-3c'\leq(1-\zeta)^{1/2}\leq 1+3c'$, thus we can make integrations by parts in the integral in $\zeta$ since in the region we consider we have $|\partial_{\zeta}\phi_{\pm}|\geq p\mp1+c_{0}+c'-p-c_{0}-c'\pm 1-3c'\geq c'$. From the discussion above and
\begin{multline}\label{init}
(1-\psi_{\pm}(z))I_{\pm}(\varrho)_{\eta}(z,\lambda)=
e^{\pm i\pi/2-i\pi/4}(1-\psi_{\pm}(z))\times \\
\times \Psi(\eta)\frac{\eta\lambda}{2\pi}\int_{\zeta,z'}e^{i\eta\lambda((z-z')\zeta\mp\frac{2}{3}((1-\zeta)^{3/2}-1))}a_{\pm}(\zeta,\eta\lambda)\tilde{\chi}(\zeta)\psi(z')\varrho(z',\lambda)dz'd\zeta
\end{multline}
we conclude by performing integrations by parts in $\zeta$. In fact, we could have noticed from the beginning that, inserting under the integral \eqref{init} a cut-off localized close to $\zeta=0$, $z'=z$ and performing integrations by parts, one makes appear a factor bounded by $(1+\lambda|\eta||\zeta|)^{-N}$ for all $N\geq 0$. 
\end{itemize}
\end{proof}

\subsubsection{Construction of the approximate solution}
Let $p\in\mathbb{Z}$ and $K_{p}=[-c_{0}+p,c_{0}+p]$. For $\eta\in \text{supp}(\Psi)$, some $\tilde{\lambda}\geq 1$ and $\varrho(.,\tilde{\lambda})\in\mathcal{S}_{K_{0}}(\tilde{\lambda})$ write 
\begin{equation}\label{opaaa}
I_{\pm}(\varrho(.,\tilde{\lambda}))_{\eta}(z,\lambda)=e^{\pm i\pi/2-i\pi/4}\frac{\eta\lambda}{2\pi}\int e^{i\eta\lambda(z\zeta-\psi_{\pm}(z',\zeta))}\chi(\zeta)a_{\pm}(\zeta,\eta\lambda)\varrho(z',\tilde{\lambda})dz'd\zeta,
\end{equation}
where we set
\begin{equation}\label{symba}
\psi_{\pm}(z',\zeta)=z'\zeta\pm\frac{2}{3}((1-\zeta)^{3/2}-1).
\end{equation}
We want to apply the Egorov theorem in order to invert the operators $I_{\pm,\eta}$.
The symbols $\chi(\zeta)a_{\pm}(\zeta,\eta\lambda)$ are elliptic at $\zeta=0$, consequently (eventually shrinking the support of $\chi$) there exists symbols $b_{\pm}(\zeta,\eta\lambda)$ which are asymptotic expansions in $(\eta\lambda)^{-1}$ for $\eta$ belonging to the support of $\Psi$,
such that, if one denotes by $J_{\pm}(.)_{\eta}$ the operators defined for $\breve{\varrho}\in\mathcal{S}_{K\mp 1}(\tilde{\lambda})$ by 
\begin{equation}\label{opbbb}
J_{\pm}(\breve{\varrho}(.,\tilde{\lambda}))_{\eta}(z',\lambda)=e^{\mp i\pi/2+i\pi/4}\frac{\eta\lambda}{2\pi}\int e^{i\eta\lambda(\psi_{\pm}(z',\zeta)-z\zeta)}b_{\pm}(\zeta,\eta\lambda)\breve{\varrho}(z,\tilde{\lambda})dzd\zeta,
\end{equation}
then one has 
\[
\breve{\varrho}(.,\tilde{\lambda})=I_{+}(J_{+}(\breve{\varrho}(.,\tilde{\lambda}))_{\eta}(.,\lambda))_{\eta}(.,\lambda)+O_{\mathcal{S}(\mathbb{R})}(\lambda^{-\infty})+O_{\mathcal{S}(\mathbb{R})}(\tilde{\lambda}^{-\infty}),
\]
\[
\varrho(.,\tilde{\lambda})=J_{+}(I_{+}(\varrho(.,\tilde{\lambda}))_{\eta}(.,\lambda))_{\eta}(.,\lambda)+O_{\mathcal{S}(\mathbb{R})}(\lambda^{-\infty})+O_{\mathcal{S}(\mathbb{R})}(\tilde{\lambda}^{-\infty}),
\]
 and also 
\[ 
\breve{\varrho}(.,\tilde{\lambda})=I_{-}(J_{-}(\breve{\varrho}(.,\tilde{\lambda}))_{\eta}(.,\lambda))_{\eta}(.,\lambda)+O_{\mathcal{S}(\mathbb{R})}(\lambda^{-\infty})+O_{\mathcal{S}(\mathbb{R})}(\tilde{\lambda}^{-\infty}), 
\]
\[
\varrho(.,\tilde{\lambda})=J_{-}(I_{-}(\varrho(.,\tilde{\lambda}))_{\eta}(.,\lambda))_{\eta}(.,\lambda)+O_{\mathcal{S}(\mathbb{R})}(\lambda^{-\infty})+O_{\mathcal{S}(\mathbb{R})}(\tilde{\lambda}^{-\infty}).
\] 
\begin{rmq}\label{rmqc}
A direct computation shows that, for instance
\[
J_{\pm}(I_{\pm}(\varrho(.,\tilde{\lambda}))_{\eta}(.,\lambda))_{\eta}(z,\lambda)=\frac{\eta\lambda}{2\pi}\int e^{i\eta\lambda(z-z')\zeta}\chi(\zeta)a_{\pm}(\zeta,\eta\lambda)b_{\pm}(\zeta,\eta\lambda)\varrho(z',\tilde{\lambda})dz' d\zeta
\]
and consequently (since the coefficients do not depend on $z'$ and because of the expression of the phase functions $\psi_{\pm}(z',\zeta)$) one can take $b_{\pm}(\zeta,\eta\lambda)=\frac{\chi(\zeta)}{a_{\pm}(\zeta,\eta\lambda)}$.
\end{rmq}
\begin{prop}\label{propimportant}
Let $N\simeq \lambda h^{\epsilon}$ for some small $\epsilon>0$ and $1\leq n\leq N$. Let $T_{k}$ be the translation operator which to a given $\varrho(z)$ associates $\varrho(z+k)$. Then for $\eta\in \text{supp}(\Psi)$
\begin{equation}\label{t1jit1n}
(T_{1}\circ J_{+}(.)_{\eta}\circ I_{-}(.)_{\eta}\circ T_{1})^{n}:\mathcal{S}_{K_{0}}(\lambda)\rightarrow\mathcal{S}_{K_{0}}(\lambda/n)\quad \text{uniformly in} \quad n.
\end{equation}
Notice that since $\lambda/n \geq h^{-\epsilon}\gg 1$, then one has
\begin{equation}\label{equivohinf}
O_{\mathcal{S}(\mathbb{R})}(\lambda^{-\infty})=O_{\mathcal{S}(\mathbb{R})}((\lambda/n)^{-\infty})=O_{\mathcal{S}(\mathbb{R})}(h^{\infty}).
\end{equation}
\end{prop}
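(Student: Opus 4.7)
The plan is to compute the one-step operator $S := T_1 \circ J_+(\cdot)_\eta \circ I_-(\cdot)_\eta \circ T_1$ explicitly, verify that the two translations $T_1$ cancel the linear drift produced by $J_+$ and $I_-$, and then exploit the fact that $S$ is a Fourier multiplier, so that $S^n$ is simply that same multiplier raised to the $n$-th power. Substituting the representations \eqref{opaaa} and \eqref{opbbb} into $J_+\circ I_-$ and integrating out the intermediate spatial variable yields a $\delta$ identifying the two frequency variables; after conjugating by $T_1$ on each side, the kernel of $S$ carries the phase
\[
\eta\lambda\Bigl((z-w+2)\zeta + \tfrac{4}{3}\bigl((1-\zeta)^{3/2}-1\bigr)\Bigr).
\]
The key algebraic observation is that the Taylor expansion $\tfrac{4}{3}((1-\zeta)^{3/2}-1) = -2\zeta + \tfrac{\zeta^2}{2} + O(\zeta^3)$ cancels the $2\zeta$ shift contributed by the two translations, so the phase reduces to $(z-w)\zeta + \phi(\zeta)$ with $\phi(\zeta) = \tfrac{\zeta^2}{2} + O(\zeta^3)$.

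Taking Fourier transform in $z$ then presents $S$ as a Fourier multiplier
\[
\widehat{S\varrho}(\xi) = m(\xi/(\eta\lambda);\eta\lambda)\,\hat\varrho(\xi), \qquad m(\zeta;\eta\lambda) = -e^{i\eta\lambda\phi(\zeta)}\chi(\zeta)(b_+ a_-)(\zeta,\eta\lambda),
\]
and since multipliers commute the $n$-fold iterate is
\[
\widehat{S^n\varrho}(\xi) = (-1)^n e^{in\eta\lambda\phi(\xi/(\eta\lambda))} \bigl(\chi\,b_+a_-\bigr)^n\!\bigl(\xi/(\eta\lambda),\eta\lambda\bigr)\,\hat\varrho(\xi).
\]
Thus $n\eta\lambda\phi(\zeta)$ plays the role of a new semiclassical phase with large parameter $n\eta\lambda$, whose natural scale of oscillation is $\zeta\sim(n\eta\lambda)^{-1/2}$ (since $\phi(\zeta) = \zeta^2/2 + O(\zeta^3)$), whereas the cutoff $\chi$ still confines $\zeta$ to a fixed neighbourhood of $0$.

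From this explicit form I would then verify the two conditions of Definition \ref{dfns} with $\lambda$ replaced by $\lambda/n$. For the uniform $C^\infty$ bounds, differentiating under the Fourier integral and applying stationary phase at the critical point of $z\xi + n\eta\lambda\phi(\xi/(\eta\lambda))$ gives constants independent of $n$ in the regime $n\leq N$, because $(\chi b_+ a_-)^n$ is bounded by $1$ up to lower-order corrections (by Remark \ref{rmqc} one has $b_\pm\approx\chi/a_\pm$, and $a_-\approx a_+$ at leading order). For the rapid-decay condition $(1-\psi(z))S^n\varrho = O_{\mathcal{S}}((\lambda/n)^{-\infty})$, the cancellation of the $2\zeta$ term ensures that $z\xi + n\eta\lambda\phi(\xi/(\eta\lambda))$ has no critical point in $\xi$ outside a neighbourhood of $K_0$; integration by parts there produces decay factors controlled by the effective scale $\eta\lambda/n$, yielding the claimed $(\lambda/n)^{-\infty}$ tails.

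The main obstacle is keeping track of the polynomial factors in $n$ that appear at each integration by parts when one differentiates the $n$-th power $m^n$ via the chain rule; these have to be balanced against the semiclassical gain from the oscillatory factor $e^{in\eta\lambda\phi}$. The assumption $n \leq N \simeq \lambda h^\epsilon$ is designed precisely so that $\lambda/n \geq h^{-\epsilon}$ remains a large parameter and the resulting $O((\lambda/n)^{-\infty})$ remainders are absorbed into the global $O(h^\infty)$ error recorded in \eqref{equivohinf}.
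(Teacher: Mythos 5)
Your argument follows the same route as the paper: conjugating $J_+\circ I_-$ by $T_1$ cancels the linear drift and exhibits the one-step map as a convolution (Fourier multiplier) with phase $f(\zeta)=2\zeta-\tfrac{4}{3}(1-(1-\zeta)^{3/2})=\tfrac{\zeta^2}{2}+O(\zeta^3)$, so the $n$-fold iterate is just the $n$-th power of that multiplier, and the two conditions of Definition \ref{dfns} are checked by stationary/non-stationary phase. Up to that point your proposal is correct.

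However, you flag the ``polynomial factors in $n$ coming from the chain rule on $m^n$'' as the main obstacle and then assert, without derivation, that the net decay is $(\lambda/n)^{-\infty}$. That is precisely the nontrivial step, and it is what the paper resolves by the rescaling $\tilde\zeta=n\zeta$, $\tilde\lambda=\lambda/n$. After this substitution the kernel becomes
\[
(F_{\eta\lambda})^{*n}(z)=\frac{\eta\tilde\lambda}{2\pi}\int e^{i\eta\tilde\lambda\bigl(z\tilde\zeta+g_n(\tilde\zeta)\bigr)}c^{n}\!\Bigl(\frac{\tilde\zeta}{n},n\eta\tilde\lambda\Bigr)\,d\tilde\zeta,\qquad g_n(\tilde\zeta)=n^{2}f\!\Bigl(\frac{\tilde\zeta}{n}\Bigr),
\]
and the point of the rescaling is twofold: the rescaled phase satisfies $|g_n(\tilde\zeta)|\lesssim|\tilde\zeta|^{2}$, $|g_n'(\tilde\zeta)|\lesssim|\tilde\zeta|$, $|g_n^{(k)}(\tilde\zeta)|\leq d_k$ ($k\geq 2$), $|g_n''|\geq d_2'>0$, all uniformly in $n$; and the rescaled symbol satisfies $|\partial^{m}_{\tilde\zeta}\chi^{2n}(\tilde\zeta/n)|\leq e_m$ uniformly in $n$, because the factor $n$ produced by differentiating the $n$-th power is exactly compensated by the $1/n$ from the inner derivative $\partial_{\tilde\zeta}(\tilde\zeta/n)$. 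Only after this normalisation does stationary phase with large parameter $\tilde\lambda$ give constants independent of $n$, and only then does non-stationary phase away from $K_0$ give the $O((\lambda/n)^{-\infty})$ tails. Without the rescaling your ``balancing'' is an assertion, not a proof; the apparent gain $n\eta\lambda$ from the oscillation is genuinely eaten down to $\lambda/n$ by the chain-rule losses, and showing this is the heart of the proposition. Fill in the rescaling step and the bounds on $g_n$ and on $c^n$ (including the fact that $(1+O((\eta\lambda)^{-1}))^n$ stays bounded because $n\leq N\ll\lambda$), and your argument coincides with the paper's.
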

\begin{rmq}
Notice that at this point we have a restriction on the number of reflections $N$ which should be much smaller when compared to $\lambda=a^{3/2}/h$. In fact, in the proof of Proposition \ref{propimportant} we apply the stationary phase with parameter $\lambda/n$ which should be large enough, more precisely it should be larger than some (positive) power of $h^{-1}$. Using \eqref{equivohinf}, this would imply that away from the critical points the contributions of the oscillatory integrals are $O(h^{\infty})$.
\end{rmq}
\begin{proof}
We start by determining the explicit form of the operator in \eqref{t1jit1n}.

For $\zeta$ on the support of $\chi$, $|\zeta|\leq 2c<1$ for $c$ small enough, set
\begin{equation}
f(\zeta)=2\zeta-\frac{4}{3}(1-(1-\zeta)^{\frac{3}{2}})=\frac{\zeta^{2}}{2}+O(\zeta^{3}).
\end{equation}
Let $\varrho(.,\lambda)\in\mathcal{S}_{K_{0}}(\lambda)$. Then for $\eta$ on the support of $\Psi$ we have
\begin{multline}
T_{1}(J_{+}(I_{-}(T_{1}(\varrho(.,\lambda)))_{\eta})_{\eta})(z)=\frac{\eta\lambda}{2\pi}\int_{z',\zeta} e^{i\eta\lambda((z-z')\zeta+f(\zeta))}c(\zeta,\eta\lambda)\varrho(z',\lambda)dz'd\zeta\\=(F_{\eta\lambda}*\varrho(.,\lambda))(z),
\end{multline}
where we set, for $\eta$ on the support of $\Psi$
\begin{equation}
F_{\eta\lambda}(z)=\frac{\eta\lambda}{2\pi}\int_{\zeta} e^{i\eta\lambda (z\zeta+f(\zeta))}c(\zeta,\eta\lambda)d\zeta,
\end{equation}
 \begin{equation}\label{symbc}
 c(\zeta,\eta\lambda)=\chi(\zeta)a_{+}(\zeta,\eta\lambda)b_{-}(\zeta,\eta\lambda)=\chi^{2}(\zeta)\sum_{j\geq 0}c_{j}(1-\zeta)^{-3j/2}(\eta\lambda)^{-j},\quad c_{0}=1,
 \end{equation}
thus for each $n$ we can write 
\begin{equation}\label{convo}
(T_{1}\circ J_{+}(.)_{\eta}\circ I_{-}(.)_{\eta}\circ T_{1})^{n}(\varrho(.,\lambda))(z)=(F_{\eta\lambda})^{*n}*\varrho(.,\lambda)(z).
\end{equation}
We can explicitly compute $(F_{\eta\lambda})^{*n}$
\begin{equation}\label{gmare}
(F_{\eta\lambda})^{* n}(z)=\frac{\eta\lambda}{2\pi}\int_{\zeta} e^{i\eta\lambda z\zeta}\hat{F}_{\eta\lambda}^{n}(\eta\lambda\zeta)d\zeta=
\frac{\eta\lambda}{2\pi}\int_{\zeta} e^{i\eta\lambda(z\zeta+nf(\zeta))}c^{n}(\zeta,\eta\lambda)d\zeta. 
\end{equation}
Set $\tilde{\zeta}=n\zeta$ and $\tilde{\lambda}=\frac{\lambda}{n}$. The choice we made for $N$ allows to write the right hand side of \eqref{gmare} as
\begin{equation}\label{fn}
(F_{\eta\lambda})^{*n}(z)=\frac{\eta\tilde{\lambda}}{2\pi}\int_{\tilde{\zeta}} e^{i\eta\tilde{\lambda}(z\tilde{\zeta}+n^{2} f(\frac{\tilde{\zeta}}{n}))}c^{n}(\frac{\tilde{\zeta}}{n},n\eta\tilde{\lambda})d\tilde{\zeta},
\end{equation}
therefor we obtain
\begin{multline}\label{varnbr}
(T_{1}\circ J_{+}(.)_{\eta}\circ I_{-}(.)_{\eta}\circ T_{1})^{n}(\varrho(.,\lambda))(z)=\\=\frac{\eta\tilde{\lambda}}{2\pi}\int_{\tilde{\zeta}} e^{i\eta\tilde{\lambda}((z-z')\tilde{\zeta}+n^{2} f(\frac{\tilde{\zeta}}{n}))}c^{n}(\frac{\tilde{\zeta}}{n},n\eta\tilde{\lambda})\varrho(z',\lambda)d\tilde{\zeta}dz'.
\end{multline}
The phase function in \eqref{varnbr} is given by
\[
\eta\phi_{n}(z,z',\tilde{\zeta})=\eta((z-z')\tilde{\zeta}+n^{2} f(\frac{\tilde{\zeta}}{n})),
\]
and its critical points satisfy
\begin{equation}
\partial_{\tilde{\zeta}}\phi_{n}=z-z'+nf'(\frac{\tilde{\zeta}}{n})=0,\quad \partial_{z'}\phi_{n}=-\tilde{\zeta}=0.
\end{equation}
\begin{itemize}
\item In order to show that for all $\alpha\geq 0$ there exists constants $C^{2}_{\alpha}$ independent of $n$, $\lambda$, such that
\[
\sup_{z\in\mathbb{R},\tilde{\lambda}=\lambda/n\geq 1}|\partial^{\alpha}_{z}(T_{1}\circ J_{+}(.)_{\eta}\circ I_{-}(.)_{\eta}\circ T_{1})^{n}(\varrho(.,\lambda))(z)|\leq C^{2}_{\alpha}
\]
we write
\[
\partial^{\alpha}_{z}(T_{1}\circ J_{+}(.)_{\eta}\circ I_{-}(.)_{\eta}\circ T_{1})^{n}(\varrho(.,\lambda))(z)=(F_{\eta\lambda})^{*n}*\partial^{\alpha}_{z}\varrho(.,\lambda)(z).
\]
For $\tilde{\zeta}$ outside a small neighborhood of $0$, $|\tilde{\zeta}|\geq c$, we perform integrations by parts in $z'$ in the integral \eqref{fn} defining $(F_{\eta\lambda})^{*n}(z)$ and obtain a contribution arbitrarily small. For $|\tilde{\zeta}|<2c$ small, let $\psi$ be a smooth function with support included in a $c$-neighborhood of $K_{0}$ and such that $(1-\psi)\varrho(.,\lambda)=O_{\mathcal{S}(\mathbb{R})}(\lambda^{-\infty})$. For $z$ away from a $5 c$-neighborhood of $K_{0}$ we saw in the proof of Lemma \ref{lemlocal} that we have $|\partial_{\tilde{\zeta}}\phi_{n}(z,z',\tilde{\zeta})|\geq c$ and we conclude again by integrations by parts in $\tilde{\zeta}$. Near the critical points $\tilde{\zeta}=0$ and $z=z'-nf'(\frac{\tilde{\zeta}}{n})$ we can apply the stationary method lemma in both variables $z'$, $\tilde{\zeta}$, uniformly in $n$: it is crucial here that $f(0)=f'(0)=0$, $f''(0)=1$ which gives, setting $g_{n}(\tilde{\zeta})=n^{2}f(\frac{\tilde{\zeta}}{n})$, 
\[
|g_{n}(\tilde{\zeta})|\leq d_{0}|\tilde{\zeta}|^{2},\quad |g'_{n}(\tilde{\zeta})|\leq d_{1}|\tilde{\zeta}|,\quad |g^{(k)}_{n}(\tilde{\zeta})|\leq d_{k},\forall k\geq 2,
\]
with constants $d_{k}$ independent of $n$ (we deal with Fourier multipliers), $|g''_{n}|\geq d'_{2}>0$ and that, on the other hand, from \eqref{symbc} we have $|\partial^{m}_{\tilde{\zeta}}\chi^{2n}(\frac{\tilde{\zeta}}{n})|\leq e_{m}$ for all $m\geq 0$ with constants $e_{m}$ independent of $n$ and 
\[
|\partial_{\tilde{\zeta}}c^{n}(\frac{\tilde{\zeta}}{n},n\eta\tilde{\lambda})|\leq e_{1}|c^{n}(\frac{\tilde{\zeta}}{n},n\eta\tilde{\lambda}|+e_{0}|\sum_{j\geq 0} c_{j}\frac{3j}{2}\frac{(1-\frac{\tilde{\zeta}}{n})^{-(3j+2)/2}}{(n\eta\tilde{\lambda})^{j}}|.
\]

\item To check that for a smooth function $\tilde{\psi}$ equal to $1$ in a neighborhood of $K_{0}$ we have 
\[
(1-\tilde{\psi})(T_{1}\circ J_{+}(.)_{\eta}\circ I_{-}(.)_{\eta}\circ T_{1})^{n}(\varrho(.,\lambda))=O_{\mathcal{S}(\mathbb{R})}(\tilde{\lambda}^{-\infty})
\]
we use the same arguments as in the second part of the proof of Lemma \ref{lemlocal}.\end{itemize}
\end{proof}
\begin{dfn}\label{dfnmodvrn}
Let $\varrho(.,\lambda)\in\mathcal{S}_{K_{0}}(\lambda)$ and $\eta\in \text{supp}(\Psi)$. For $1\leq n\leq N$, $N\simeq\lambda h^{\epsilon}$ set
\[
\varrho^{n}(z,\eta,\lambda)=(-1)^{n}\Psi(\eta)(T_{1}\circ J_{+}(.)_{\eta}\circ I_{-}(.)_{\eta}T_{1})^{n}(\varrho(.,\lambda))(z),
\]
\[
\varrho^{0}(z,\eta,\lambda)=\varrho(z,\lambda)\Psi(\eta).
\]

From Proposition \ref{propimportant} it follows that $\varrho^{n}(z,\eta,\lambda)\in\mathcal{S}_{K_{0}}(\lambda/n)$. Let
\begin{equation}\label{forgn}
g^{n}(t,s,\eta,h)=\varrho^{n}(\frac{t+2(1+a)^{1/2}s}{2(1+a)^{1/2}a^{1/2}}-2n,\eta,\lambda),
\end{equation}
\begin{equation}
\Phi_{n}(t,x,y,s,\eta)=\eta(y-t(1+a)^{1/2}+(x-a)s+\frac{s^{3}}{3}+\frac{4}{3}n a^{3/2}),
\end{equation}
and set
\begin{equation}\label{foru}
U_{h}=\sum_{n=0}^{N}u^{n}_{h}, \quad u^{n}_{h}(t,x,y)=\int e^{\frac{i}{h}\Phi_{n}}g^{n}(t,s,\eta,h)ds d\eta.
\end{equation}
\end{dfn}
\begin{prop}\label{propestdirbound}
With this choice of the symbols $g^{n}$ we have for all $0\leq n\leq N-1$
\begin{equation}
Tr_{-}(u^{n}_{h})(t,y;h)+Tr_{+}(u^{n+1}_{h})(t,y;h)=O_{L^{2}}(\lambda^{-\infty}).
\end{equation} 
\end{prop}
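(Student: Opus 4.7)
The plan is to first extend Proposition \ref{bound} to $u^n_h$, then observe that the oscillating phases of $Tr_-(u^n_h)$ and $Tr_+(u^{n+1}_h)$ coincide, and finally verify the cancellation of the corresponding amplitudes via the Egorov inversion of Remark \ref{rmqc}.

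The first step adapts the derivation of Proposition \ref{bound}. Setting $\tilde{\varrho}(w,\eta,\lambda) := \varrho^n(w-2n,\eta,\lambda) = (T_{-2n}\varrho^n)(w,\eta,\lambda)$, the symbol $g^n$ rewrites as $\tilde\varrho(z+v,\eta,\lambda)$ with $z = t/(2(1+a)^{1/2}a^{1/2})$ and $v = s/a^{1/2}$. Since $\Phi_n|_{x=0} = \Phi|_{x=0} + \frac{4n}{3}\eta a^{3/2}$, the same Fourier--Airy computation used in Proposition \ref{bound} yields, modulo $O_{L^2}(\lambda^{-\infty})$,
\[
Tr_\pm(u^n_h)(t,y;h) = 2\pi\sqrt{a/\lambda}\int e^{\frac{i\eta}{h}(y-t(1+a)^{1/2}+\frac{4n\mp 2}{3}a^{3/2})}\,\eta^{-1/2}\, I_\pm(\varrho^n)_\eta(z-2n,\lambda)\,d\eta,
\]
where I use that $I_\pm$ commutes with the $z$-translation $T_{-2n}$, as follows directly from \eqref{opaaa} and the linearity of $\psi_\pm(z',\zeta)$ in $z'$ (see \eqref{symba}). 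Inspection of the exponents shows that $Tr_-(u^n_h)$ and $Tr_+(u^{n+1}_h)$ carry the same outer oscillation $e^{\frac{i\eta}{h}(y-t(1+a)^{1/2}+\frac{4n+2}{3}a^{3/2})}$, so the proposition reduces to the symbolic identity
\[
I_-(\varrho^n)_\eta(z-2n,\lambda) + I_+(\varrho^{n+1})_\eta(z-2n-2,\lambda) \in O_{\mathcal{S}(\mathbb{R})}(\lambda^{-\infty})
\]
uniformly in $\eta \in \mathrm{supp}(\Psi)$; Plancherel in $y$ together with integration over $t \in [0,1]$ then converts this into the desired $O_{L^2}(\lambda^{-\infty})$ bound.

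To prove the symbolic identity, Definition \ref{dfnmodvrn} gives $\varrho^{n+1} = -\,T_1 \circ J_+ \circ I_- \circ T_1\, \varrho^n$ on $\mathrm{supp}(\Psi)$, since the scalar $\Psi(\eta)$ commutes with the $z$-operators. Each of $I_\pm$, $J_\pm$ commutes with the translations $T_k$ in $z$ (the phases $\psi_\pm$ being linear in $z'$), so pushing the two $T_1$ past $I_+, J_+, I_-$ yields $I_+(\varrho^{n+1}) = -\,T_2 \circ I_+ \circ J_+ \circ I_-\, \varrho^n$. The Egorov inversion $I_+ \circ J_+ = \mathrm{Id} + O_{\mathcal{S}(\mathbb{R})}(\lambda^{-\infty})$ of Remark \ref{rmqc}, evaluated at $z-2n-2$, then produces $-I_-(\varrho^n)_\eta(z-2n,\lambda) + O(\lambda^{-\infty})$, which is exactly the desired identity. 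The main delicate point is that this Egorov remainder must truly be of size $\lambda^{-\infty}$ and not merely $(\lambda/n)^{-\infty}$; this is ensured by the hypothesis $N \lesssim \lambda h^\epsilon$, which through \eqref{equivohinf} identifies both scales with $O(h^\infty)$ uniformly for $n \leq N$.
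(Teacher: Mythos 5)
Your proposal is correct and follows the same route as the paper: reduce the boundary traces of $u^n_h$ and $u^{n+1}_h$ to the operators $I_\pm$ acting on translated symbols (the phases matching because $\Phi_{n}$ and $\Phi_{n+1}$ contribute the same exponent $\frac{4n+2}{3}a^{3/2}$ at the respective critical points), then cancel via the convolution/translation structure of $I_\pm$, the definition $\varrho^{n+1}=-T_1J_+I_-T_1\varrho^n$, and the Egorov inversion $I_+J_+=\mathrm{Id}+O_{\mathcal S}(\lambda^{-\infty})$ of Remark \ref{rmqc}. The paper's own proof is a two-line version of exactly this argument, and your additional care about the $(\lambda/n)^{-\infty}$ versus $\lambda^{-\infty}$ scales, resolved by \eqref{equivohinf}, is consistent with it.
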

\begin{proof}
The proof follows from Propositions \ref{bound}, \ref{propimportant} and the definition of the symbols $g^{n}$, since we have
\[
e^{\frac{i\pi}{2}}I_{-}(T_{1}(\varrho^{n}(.,\eta,\lambda)))_{\eta}+e^{-\frac{i\pi}{2}}I_{+}(T_{-1}(\varrho^{n+1}(.,\eta,\lambda)))_{\eta}=O_{\mathcal{S}(\mathbb{R})}(\lambda^{-\infty}),
\]
where $T_{\pm 1}$ are the translation operators which to a given $\varrho(z)$ associate $\varrho(z\pm 1)$ and since the operators $I_{\pm,\eta}$ are of convolution type so they commute with translations.
\end{proof}

\section{Strichartz estimates for the approximate solution} 
Let $U_{h}$ be
given by \eqref{foru} and let $(q,r)$ a sharp wave-admissible pair
in dimension $2$, i.e. such that $\frac{1}{q}=\frac{1}{2}(\frac{1}{2}-\frac{1}{r})$. The "cusp" is reflected with period $a^{1/2}$, $a=h^{\delta}$ and in order
to compute the norm of $U_{h}$ on a finite
interval of time we will take $\delta=\frac{1-\epsilon}{2}$ in order to obtain
\begin{equation}
1\simeq Na^{1/2}\simeq\lambda h^{\epsilon}h^{\delta/2}.
\end{equation}
We prove the following
\begin{prop}\label{propcountexstrichartz}
Let $r>4$, $\beta(r)=\frac{3}{2}(\frac{1}{2}-\frac{1}{r})+\frac{1}{6}(\frac{1}{4}-\frac{1}{r})$ and let $\beta\leq\beta(r)-\epsilon$ for $\epsilon>0$ small enough like before. Then the approximate solution of the wave equation \eqref{wavvv} satisfies 
\begin{equation}\label{stricon}
h^{\beta}\|U_{h}\|_{L^{q}([0,1],L^{r}(\Omega))}\gg
\|U_{h}|_{t=0}\|_{L^{2}(\Omega)}.
\end{equation}
In particular, the restriction on $\beta$ shows that the Strichartz inequalities of the free case are not valid, there is a loss of at least $\frac{1}{6}(\frac{1}{4}-\frac{1}{r})$ derivatives.
\end{prop}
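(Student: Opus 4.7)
The plan has three steps: decompose $U_h$ along the almost-disjoint time supports of the individual packets $u^n_h$; compute a single model packet norm $\|u^n_h\|_{L^q(I_n,L^r(\Omega))}$ and the $L^2$ initial norm $\|u^0_h|_{t=0}\|_{L^2}$ via stationary phase on the cusp Lagrangian; and compare the quotient against $h^{\beta(r)}$. Exploiting the time localization first, the cutoff $\varrho \in \mathcal{S}_{K_0}(\lambda)$ with $K_0=[-c_0,c_0]$ and $c_0<1/2$ confines $g^n(t,s,\eta,h)$ to the set where $\tilde t+2\sqrt{1+a}\,s \in 2\sqrt{(1+a)a}\,K_0$, with $\tilde t = t-4n\sqrt{(1+a)a}$. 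Since the $s$-phase of $u^n_h$ has critical points only at $|s|\lesssim\sqrt{a}$, a non-stationary phase argument confines $u^n_h$ (modulo $O(h^\infty)$) to an interval $I_n$ of length $\sim a^{1/2}$ centered at $4n\sqrt{(1+a)a}$. These $I_n$ are pairwise disjoint, so
\[
\|U_h\|_{L^q([0,1],L^r(\Omega))}^q = \sum_{n=0}^N \|u^n_h\|_{L^q(I_n,L^r(\Omega))}^q + O(h^\infty),\qquad U_h|_{t=0}=u^0_h|_{t=0}+O_{L^2}(h^\infty);
\]
moreover, by the Egorov conjugation of Proposition~\ref{propimportant}, each $u^n_h$ is a translate in $(t,y)$ of a common model packet, so all summands are comparable.

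Next I would analyze a single model packet. The rescaling $\sigma=s/\sqrt{a}$, $\tilde x=(x-a)/a$ converts the inner $\sigma$-integral into a cusp integral $\int e^{i\mu(\tilde x\sigma+\sigma^3/3)}\varrho^n\,d\sigma$ with large parameter $\mu=\eta\lambda$, of size $\mu^{-1/2}$ in the two-critical-point regime and $\mu^{-1/3}$ on the Airy scale $|\tilde x|\lesssim\mu^{-2/3}$. The subsequent $\eta$-integral is a semiclassical Fourier transform in $y$, producing a wave packet of transverse scale $h$ concentrated along the cusp branches $y=t\sqrt{1+a}-\tfrac43 na^{3/2}\pm\tfrac23(a-x)^{3/2}$. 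In the original variables the cusp tip occupies a region of area $\sim h^{2/3}\cdot h$ with amplitude $\sqrt{a}\,\lambda^{-1/3}=h^{1/3}$, while each branch has area $\sim h\cdot a$ with amplitude $\sqrt{a}\,\lambda^{-1/2}$; comparing $A^r V$ across the two regimes shows that the tip dominates exactly when $r>4$, yielding $\|u^n_h(t,\cdot)\|_{L^r(\Omega)}\simeq h^{1/3+5/(3r)}$ uniformly in $t\in I_n$. Integrating in $t$ over $I_n$ and summing the $N\sim\lambda h^\epsilon$ packets with $Na^{1/2}\simeq h^{O(\epsilon)}$ gives $\|U_h\|_{L^q([0,1],L^r)}\simeq h^{1/3+5/(3r)+O(\epsilon)}$. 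Plancherel in $y$ followed by the same rescaling reduces $\|u^0_h|_{t=0}\|_{L^2}$ to an $L^2_x$ norm of the cusp integral at $t=0$, dominated by the stationary regime and evaluating to $h\,a^{1/4}$. Substituting $a=h^{(1-\epsilon)/2}$ and using $\tfrac{1}{3}+\tfrac{5}{3r}-\tfrac{9}{8}=-\beta(r)$ yields
\[
\frac{h^\beta\|U_h\|_{L^q L^r}}{\|U_h|_{t=0}\|_{L^2}} \simeq h^{\beta-\beta(r)+O(\epsilon)},
\]
which tends to infinity as $h\to 0$ whenever $\beta\leq\beta(r)-\epsilon$ with $\epsilon$ larger than the implicit $O(\epsilon)$ error, establishing \eqref{stricon}.

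The hard part will be the precise $L^r$ computation on a single packet: identifying $r=4$ as the exact crossover between the tip and branch regimes requires a careful split of the $(x,y)$ integration into the Airy region and the stationary region, with the amplitudes from the cusp integral correctly matched to the $y$-localization of the semiclassical Fourier transform, and the correction terms arising from the symbols $a_\pm$ (with their $(1-\zeta)^{-3j/2}$ tail at the Airy boundary) controlled uniformly in $\eta$. A secondary issue is that one must verify that the uniformity of the Egorov estimate in $n$ coming from Proposition~\ref{propimportant} is strong enough that the overlaps between consecutive $I_n$, together with the growing range of $z_0$ inside $K_0$ as $t$ moves across $I_n$, do not degrade the $N^{1/q}$ gain from the summation.
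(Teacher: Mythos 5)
Your proposal follows the same route as the paper's proof: partition $[0,1]$ into time slabs of length $\sim a^{1/2}$ on which only one packet $u^n_h$ contributes (Lemma~\ref{lemunic}), estimate a single packet's $L^r(\Omega)$ norm by stationary phase on the cusp to get $h^{1/3+5/(3r)}$ for $r>4$ with crossover at $r=4$ (Proposition~\ref{propnorm}), compute $\|u^0_h|_{t=0}\|_{L^2}\simeq h\,a^{1/4}$, sum the $N\sim\lambda h^{\epsilon}$ contributions using $Na^{1/2}\simeq 1$ at $\delta=(1-\epsilon)/2$, and close via the identity $\tfrac13+\tfrac{5}{3r}-\tfrac98=-\beta(r)$. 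The only imprecisions are cosmetic: the essential supports of the $u^n_h$ actually overlap slightly (length $\sim 4a^{1/2}(1+c_0)$ with centers spaced $4a^{1/2}$), so the paper restricts to strictly smaller disjoint sub-intervals of length $\sim c_0 a^{1/2}$ and uses a lower bound rather than your stated equality, and the $O(\epsilon)$ slop in your final ratio works out to $\epsilon/8<\epsilon$, so the condition $\beta\leq\beta(r)-\epsilon$ suffices exactly as you assert.
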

\begin{proof}
In the construction of $U_{h}$ we considered an initial "cusp" $u^{0}_{h}$ of the form \eqref{integral}, with symbol $g$ given by \eqref{gforme},
with $\varrho^{0}\in\mathcal{S}_{[-c_{0},c_{0}]}(a^{3/2}/h)$ depending only on the integral curves of the vector field $Z$ and $\eta$, supported for $\eta$ in a small neighborhood of $1$. We introduce the Lagrangian manifold associated to $u^{n}_{h}$, with phase function $\Phi_{n}=\Phi+\frac{4}{3}n\eta a^{3/2}$
\begin{multline}\label{lagran}
\Lambda_{\Phi_{n}}:=\{(t,x,y,\tau=-(1+a)^{1/2}\eta,\xi=s\eta,\eta),\\ a-x=s^{2}, y-t(1+a)^{1/2}+\frac{4}{3}na^{3/2}=\frac{2}{3}s^{3}\}\subset T^{*}\mathbb{R}^{3}.
\end{multline}
\begin{lemma}\label{lemlagramodwfset}
Let $u^{n}_{h}$ be given by \eqref{foru}, then $WF_{h}(u^{n}_{h})\subset\Lambda_{\Phi_{n}}$.
\end{lemma}
\begin{proof}
If $|\partial_{s}\Phi_{n}|\geq c>0$ we use the operator $L_{1}=\frac{h}{i|s^{2}+a-x|}\partial_{s}$ in order to gain a power of $h^{1-\frac{\delta}{2}}$ at each integration by parts with respect to $s$,  thus the contribution we get in this case is $O_{L^{2}}(h^{\infty})$.
Let now $|\partial_{\eta}\Phi_{n}|\geq c>0$ for some positive constant $c$:  before making (repeated) integrations by parts using this time the operator $L_{2}=\frac{h\partial_{\eta}\Phi_{n}}{i|\partial_{\eta}\Phi_{n}|^{2}}\partial_{\eta}$ we need to estimate the derivatives with respect to $\eta$ for each $g^{n}$ defined in \eqref{forgn}. We have
\begin{multline}
u^{n}_{h}(t,x,y)=(-1)^{m}\int e^{\frac{i}{h}\Phi_{n}}L^{*m}_{2}(\varrho^{n}(\frac{t+2(1+a)^{1/2}s}{2(1+a)^{1/2}a^{1/2}}+2n,\eta,\lambda))d\eta ds\\
=(-1)^{m+n}h^{m}\int e^{\frac{i}{h}\Phi_{n}}\frac{(\partial_{\eta}\Phi_{n})^{m}}{|\partial_{\eta}\Phi_{n}|^{2m}}\Big(\sum_{k=0}^{m}\partial^{m-k}_{\eta}\Psi(\eta)\partial^{k}_{\eta}(F_{\eta\lambda})^{*n}\Big)*\\ *\varrho(,\lambda)(\frac{t+2(1+a)^{1/2}s}{2(1+a)^{1/2}a^{1/2}}+2n)d\eta ds,
\end{multline}
where $*$ denotes the convolution product. The derivatives of $(F_{\eta\lambda})^{*n}$ with respect to $\eta$ are easily computed using the explicit form of $(F_{\eta\lambda})^{*n}$ that we recall (see the proof of Proposition \ref{propimportant}):
\[
(F_{\eta\lambda})^{*n}(z)=\frac{\eta\tilde{\lambda}}{2\pi}\int_{\tilde{\zeta}}e^{i\eta\tilde{\lambda}(z\tilde{\zeta}+ n^{2}f(\frac{\tilde{\zeta}}{n}))}c^{n}(\frac{\tilde{\zeta}}{\lambda},\eta n\tilde{\lambda})d\tilde{\zeta},
\]
with $c(\zeta,\omega)=\chi^{2}(\zeta)\sum_{j\geq 0}c_{j}(1-\zeta)^{-3j/2}\omega^{-j}$ and where we have made the change of variable $\tilde{\zeta}=n\zeta$ and set $\tilde{\lambda}=\lambda/n\geq h^{-\epsilon}\gg 1$. Hence one $\eta$-derivative yields
\begin{multline}\label{derivfetlan}
\partial_{\eta}(F_{\eta\lambda})^{*n}(z)=\frac{1}{\eta}(F_{\eta\lambda})^{*n}(z)+\frac{\eta\tilde{\lambda}}{2\pi}\int_{\tilde{\zeta}}e^{i\eta\tilde{\lambda}(z\tilde{\zeta}+n^{2}f(\frac{\tilde{\zeta}}{n}))}i\tilde{\lambda}(z\tilde{\zeta}+ n^{2}f(\frac{\tilde{\zeta}}{n}))c^{n}(\frac{\tilde{\zeta}}{n},\eta n\tilde{\lambda})d\tilde{\zeta}\\
+ \frac{\eta\tilde{\lambda}}{2\pi}\int_{\tilde{\zeta}}e^{i\eta\tilde{\lambda}(z\tilde{\zeta}+ n^{2}f(\frac{\tilde{\zeta}}{n}))}n\partial_{\eta}c(\frac{\tilde{\zeta}}{n},\eta n \tilde{\lambda})c^{n-1}(\frac{\tilde{\zeta}}{n},\eta n \tilde{\lambda})d\tilde{\zeta}.
\end{multline}
The symbol of the third term in the right hand side of \eqref{derivfetlan} is $n\partial_{\eta}c(\zeta,\eta\lambda)c^{n-1}(\zeta,\eta\lambda)$ and we have
\[
\partial_{\eta}c(\zeta,\eta\lambda)=-\eta^{-2}\lambda^{-1}\sum_{j\geq 1}jc_{j}(1-\zeta)^{-3j/2}(\eta\lambda)^{-(j-1)},
\]
and since $n\ll \lambda$, the contribution from this term is easily handled with.

The symbol in the second term in the right hand side of \eqref{derivfetlan} equals the symbol of $(F_{\eta\lambda})^{*n}$ multiplied by the factor $i\tilde{\lambda}(z\tilde{\zeta}+\lambda n f(\frac{\tilde{\zeta}}{n}))$. Recall that on the support of $c(\zeta,\eta \lambda)$ we have $\zeta=\tilde{\zeta}/n\in \text{supp}(\chi)$ is as close to zero as we want and there $f(\zeta)=\zeta^{2}/2+O(\zeta^{3})$, hence
$n^{2}f(\frac{\tilde{\zeta}}{n})=\tilde{\zeta}^{2}/2+O(\tilde{\zeta}^{3}/n)$.
On the other hand, when we take the convolution product of the second term in \eqref{derivfetlan} with $\varrho^{0}(.,\lambda)$ we obtain in the same way as in the proof of Proposition \ref{propimportant} that the critical points of the phase in the oscillatory integral obtained in this way,
\[
\frac{\eta\tilde{\lambda}}{2\pi}\int_{\tilde{\zeta},z'}e^{i\eta\tilde{\lambda}((z-z')\tilde{\zeta}+n^{2}f(\frac{\tilde{\zeta}}{n}))}i\tilde{\lambda}((z-z')\tilde{\zeta}+n^{2}f(\frac{\tilde{\zeta}}{n}))c^{n}(\frac{\tilde{\zeta}}{n},\eta n\tilde{\lambda})\varrho^{0}(z',\lambda)d\tilde{\zeta}dz',
\]
are given by $\tilde{\zeta}=0$ and $z=z'$. The phase function which will be denoted again by $\phi_{n}(z,z',\tilde{\zeta})$ as before satisfies $\phi_{n}(z,z,0)=0$, $\partial_{z'}\phi_{n}(z,z,0)=0$ and $\partial_{\tilde{\zeta}}\phi_{n}(z,z,0)=0$. Applying the stationary phase theorem in $\tilde{\zeta}$ and $z'$, the first term in the asymptotic expansion obtained in this way vanishes, and the next ones are multiplied by strictly negative, integer powers of $\tilde{\lambda}$, hence the contribution from this term will is also bounded. 

Notice that when we take higher order derivatives in $\eta$ of $\varrho^{n}$, we obtain symbols which are products of $\tilde{\lambda}^{j} (\phi_{n})^{j}\partial^{k-j}_{\eta}(c^{n}(\tilde{\zeta}/n,\eta n \tilde{\lambda}))$ and can be dealt with in the same way, taking into account this time that the first $j$ terms in the asymptotic expansion obtained after applying the stationary phase vanish. As a consequence, after each integration by parts in $\eta$ using the operator $L_{2}$ we gain a factor $h$,  meaning that the contribution of $u^{n}_{h}$ is $O_{L^{2}}(h^{\infty})$.
 \end{proof}
We also need the next results:
\begin{lemma}\label{suppo}
If $\varrho(.,\lambda)\in\mathcal{S}_{[-c_{0},c_{0}]}(\lambda)$ with $0<c_{0}<1$ sufficiently small, then $u^{n}_{h}$ have almost disjoint supports in the time variable $t$.
\end{lemma}
\begin{proof}
Let $\mu\in(0,1)$ and $|t-4n(1+a)^{1/2}a^{1/2}|\geq 2(1+a)^{1/2}a^{1/2}(1+\mu)$. Then on the essential support of $\varrho^{n}(\frac{t+2(1+a)^{1/2}s}{2(1+a)^{1/2}a^{1/2}}-2n,\eta,\lambda)$ we must have $|s|\geq a^{1/2}(1+\mu-c_{0})$ while on the Lagrangian $\Lambda_{\Phi_{n}}$ defined in \eqref{lagran} we have $|a-x|=s^{2}\leq a$. Consequently, if $\mu\geq c_{0}+\epsilon_{0}$ for some $\epsilon_{0}>0$ as small as we want, we are not anymore on the Lagrangian $\Lambda_{\Phi_{n}}$. Since outside any neighborhood of $\Lambda_{\Phi_{n}}$ the contribution in the integral defining $u^{n}_{h}$ is $O_{L^{2}}(h^{\infty})$, we conclude that $u^{n}_{h}$ "lives" essentially on a  time interval 
\[
[4n(1+a)^{1/2}a^{1/2}-2(1+a)^{1/2}a^{1/2}(1+c_{0}),4n(1+a)^{1/2}a^{1/2}+2(1+a)^{1/2}a^{1/2}(1+c_{0})].
\] 
\end{proof}
Since $a=h^{\delta}\ll 1$ and therefor $(1+a)^{1/2}\simeq 1$ we claim that $u^{n}_{h}$ is in fact essentially supported for $t$ in the time interval
\[
[4na^{1/2}-2a^{1/2}(1+c_{0}),4na^{1/2}+2a^{1/2}(1+c_{0})]
\]

\begin{lemma}\label{lemunic}
Let $0<c_{0}<1/3$ and let $I_{k}$ be small neighborhoods of  $4a^{1/2}k$ of size $a^{1/2}$,
\[
I_{k}=[4ka^{1/2}-a^{1/2}c_{0},4ka^{1/2}+a^{1/2}c_{0}].
\]
If $t\in I_{k}$ then in the sum $U_{h}(t,.)$ there is only one cusp that appears, $u^{k}_{h}(t,.)$, the contribution from all the others $u^{n}_{h}(t,.)$ with $n\neq k$ being $O_{L^{2}}(h^{\infty})$. 
\end{lemma}
\begin{proof}
On the essential support of $\varrho^{n}(.,\eta,\lambda)$ one has 
\[
|t+2(1+a)^{1/2}s-4n(1+a)^{1/2}a^{1/2}|\leq 2(1+a)^{1/2}a^{1/2}c_{0}.
\]
Suppose $n\neq k$: we have to show that the contribution from $u^{n}_{h}$ is $O_{L^{2}}(h^{\infty})$. Write
\[
2(1+a)^{1/2}a^{1/2}c_{0}\geq 4|n-k|(1+a)^{1/2}a^{1/2}-|t-4k(1+a)^{1/2}a^{1/2}|-2(1+a)^{1/2}|s|
\]
\[
\geq 4(1+a)^{1/2}a^{1/2}-(1+a)^{1/2}a^{1/2}c_{0}-2(1+a)^{1/2}|s|,
\]
which yields $|s|\geq 3a^{1/2}/2$ since $c_{0}<1/3$ and as in the proof of Lemma \ref{suppo} we see that we are localized away from a neighborhood of $\Lambda_{\Phi_{n}}$ (on which $|s|\leq a^{1/2}$), thus the contribution is $O_{L^{2}}(h^{\infty})$. Consequently, the only nontrivial part comes from $n=k$ in which case we find $|s|\leq 3c_{0}a^{1/2}/2\leq a^{1/2}/2$, thus the $k$-th "piece of cusp" does not reach the boundary $\{x=0\}$ (since on the Lagrangian $\Lambda_{\Phi_{k}}$ we have $a-x=s^{2}$ and outside any neighborhood of $\Lambda_{\Phi_{k}}$ the contribution is $O_{L^{2}}(h^{\infty})$).
\end{proof}

We turn to the proof of Proposition \ref{propcountexstrichartz}. We use Lemma \ref{lemunic} and Proposition \ref{propnorm} from the Appendix to estimate from below the $L^{q}([0,1],L^{r}(\Omega))$ norm of $U_{h}$:
\begin{align}\label{bonor}
\|U_{h}\|^{q}_{L^{q}([0,1],L^{r}(\Omega))} & =\int_{0}^{1}\|U_{h}\|^{q}_{L^{r}(\Omega)}dt =
\int_{0}^{1}\|\sum_{n=0}^{N}u^{n}_{h}\|^{q}_{L^{r}(\Omega)}dt \\
& \geq \sum_{k\leq N/5}\int_{t\in I_{k}}\|\sum_{n=0}^{N}u^{n}_{h}\|^{q}_{L^{r}(\Omega)}dt +O(h^{\infty})\\ & \simeq \sum_{k\leq N/5}|I_{k}|\|u^{0}_{h}\|^{q}_{L^{r}(\Omega)} +O(h^{\infty})\\
& \simeq \|u^{0}_{h}\|^{q}_{L^{r}(\Omega)}+O(h^{\infty}).
\end{align}
Indeed, we have shown in Lemma \ref{lemunic} that for $t$ belonging to sufficiently small intervals of time $I_{k}$ there is only $u^{k}_{h}$ to be considered in the sum since the supports of $u^{n}_{h}$ will be disjoints. On the other hand, for $t\in I_{k}$, $u^{k}_{h}(t,.)$ admits a cusp singularity at $x=a$ which guarantees that the piece of cusp does not "live" enough to reach the boundary. Moreover, we see from Proposition \ref{propnorm} that for $t\in I_{k}$ the $L^{r}(\Omega)$ norms of $u^{k}_{h}(t,.)$ are equivalent to the $L^{r}(\Omega)$ norms of $u^{0}_{h}$. Using Corollary \ref{corn} we deduce that there are constants $C$ independent of $h$ such that for $r=2$ 
\begin{equation}\label{estnorm21}
\|U_{h}|_{t=0}\|_{L^{2}(\Omega)}=\|u_{h}|_{t=0}\|_{L^{2}(\Omega)}\simeq
h^{1+\frac{\delta}{4}},
\end{equation}
while for $r>4$ 
\begin{equation}\label{estnorm3}
\|U_{h}\|_{L^{q}([0,1],L^{r}(\Omega))}\geq C
h^{\frac{1}{3}+\frac{5}{3r}}
\end{equation}
and since $\delta=\frac{1-\epsilon}{2}$ we deduce that \eqref{stricon} holds for $\beta\leq\beta(r)-\epsilon$ since we have 
\begin{multline}
h^{\beta}\|U_{h}\|_{L^{q}([0,1],L^{r}(\Omega))}\geq C h^{\beta(r)-\epsilon}h^{\frac{1}{3}+\frac{5}{3r}}
=Ch^{-7\epsilon/8} h^{1+(1-\epsilon)/8}\\\gg h^{1+\frac{\delta}{4}}\simeq \|U_{h}|_{t=0}\|_{L^{2}(\Omega)}.
\end{multline}
\begin{rmq}
Notice that for $2\leq r<4$ 
\begin{equation}\label{estimnorm11}
\|U_{h}\|_{L^{q}([0,1],L^{r}(\Omega))}\geq C
h^{\frac{1}{r}+\frac{1}{2}+\delta(\frac{1}{r}-\frac{1}{4})},
\end{equation}
therefor in this case the previous construction doesn't provide a contradiction
to the Strichartz inequalities when compared to the free case.
\end{rmq}
\end{proof}

\begin{prop}
The approximate solution $U_{h}$ defined in \eqref{foru} satisfies the Dirichlet boundary condition
\begin{equation}\label{diruh01}
U_{h}|_{[0,1]\times \partial\Omega}=O(h^{\infty}).
\end{equation}
\end{prop}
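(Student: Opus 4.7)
The plan is to combine Proposition \ref{bound} (writing each $u^n_h|_{x=0}$ as a sum of two traces) with the cancellation relation of Proposition \ref{propestdirbound} in a telescoping argument, and then argue that the two leftover ``end'' traces are essentially supported outside $[0,1]$.

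First, I apply Proposition \ref{bound} to each $u^n_h$ individually. Since the symbols $g^n$ of $u^n_h$ are of the form $\varrho^n(z-2n,\eta,\lambda)$ with $\varrho^n\in\mathcal{S}_{K_0}(\lambda/n)$, and since $I_\pm$ are Fourier multipliers (Remark \ref{rmqc}) hence commute with translations, the same computation that produced Proposition \ref{bound} applies, giving
\[
u^n_h(t,0,y)=Tr_+(u^n_h)(t,y;h)+Tr_-(u^n_h)(t,y;h)+O_{\mathcal{S}(\mathbb{R})}(\lambda^{-\infty}).
\]
Summing over $0\le n\le N$, I reorganize as
\[
U_h|_{x=0}=Tr_+(u^0_h)+Tr_-(u^N_h)+\sum_{n=0}^{N-1}\!\bigl(Tr_-(u^n_h)+Tr_+(u^{n+1}_h)\bigr)+O(N\lambda^{-\infty}).
\]
Each bracketed term is $O_{L^2}(\lambda^{-\infty})$ by Proposition \ref{propestdirbound}. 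Since $N\simeq\lambda h^\epsilon$ is polynomial in $h^{-1}$ while $\lambda^{-\infty}$ beats any polynomial, the telescoped sum contributes $O(h^\infty)$.

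It remains to handle the two boundary terms $Tr_+(u^0_h)$ and $Tr_-(u^N_h)$ when restricted to $t\in[0,1]$. By Lemma \ref{lemlocal}, since $\varrho^0=\Psi(\eta)\varrho(\cdot,\lambda)\in\mathcal{S}_{K_0}(\lambda)$, the symbol $I_+(\varrho^0)_\eta$ lies in $\mathcal{S}_{K_{-1}}(\lambda)$, so $Tr_+(u^0_h)$ is essentially supported where $z=t/(2(1+a)^{1/2}a^{1/2})\in K_{-1}=[-1-c_0,-1+c_0]$, i.e.\ for $t$ near $-2a^{1/2}<0$. Since $t\in[0,1]$ corresponds to $z\ge 0$, which sits outside a neighborhood of $K_{-1}$, the decay property of $\mathcal{S}_{K_{-1}}(\lambda)$-symbols yields $Tr_+(u^0_h)|_{[0,1]}=O(h^\infty)$. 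Similarly, the translation argument shows $Tr_-(u^N_h)$ is essentially supported where $z\in K_{2N+1}$, i.e.\ for $t\approx 2(2N+1)a^{1/2}$. By the choice $Na^{1/2}\simeq 1$ made right before Proposition \ref{propcountexstrichartz} (and with the constant chosen so that the cusps cover $[0,1]$), one has $2(2N+1)a^{1/2}>1+c'$ for some $c'>0$, so for $t\in[0,1]$ the corresponding $z$ lies far from $K_{2N+1}$, again giving $O(h^\infty)$.

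Combining these three estimates proves \eqref{diruh01}. The routine check is the telescoping, which is immediate from Proposition \ref{propestdirbound}; the main (though still mild) obstacle is verifying quantitatively that the shifted supports of $Tr_+(u^0_h)$ and $Tr_-(u^N_h)$ do not intersect the time window $[0,1]$, which is precisely why the relation $Na^{1/2}\simeq 1$ had to be imposed. Everything else follows from the localization property built into Definition \ref{dfns} and Lemma \ref{lemlocal}.
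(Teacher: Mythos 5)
Your proof is correct and follows essentially the same route as the paper's: telescope the traces using Proposition \ref{propestdirbound}, then check that the two leftover end traces $Tr_{+}(u^{0}_{h})$ and $Tr_{-}(u^{N}_{h})$ are essentially supported at times $t\approx -2a^{1/2}$ and $t\approx (4N+2)a^{1/2}>1$ respectively, hence outside $[0,1]$. Your explicit observation that the $N\simeq\lambda h^{\epsilon}$ accumulated errors of size $O(\lambda^{-\infty})$ still sum to $O(h^{\infty})$ because $N$ is only polynomial in $h^{-1}$ is a point the paper leaves implicit, and is a welcome addition.
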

\begin{proof}
Using Propositions \ref{bound} and \ref{propestdirbound}, the contribution of $U_{h}$ on the boundary writes
\begin{equation}\label{diruhohinf}
U_{h}(t,0,y) =\sum_{n=0}^{N}\sum_{\pm}Tr_{\pm}(u^{n}_{h})(t,y;h)\\  =Tr_{+}(u^{0}_{h})(t,y;h)+Tr_{-}(u^{N}_{h})(t,y;h).
\end{equation}
The first term in the right hand side of \eqref{diruhohinf} is easy to handle  since 
$Tr_{+}(u^{0}_{h})(t,y;h)$ is essentially supported for 
\[
t\in [-2(1+c_{0})a^{1/2},-2(1-c_{0})a^{1/2}].
\]
Since we consider only the restriction to $[0,1]\times\partial\Omega$, the contribution from this term will be $O_{L^{2}}(h^{\infty})$. To deal with the second term in the right hand side of \eqref{diruhohinf} we first study the essential support of $Tr_{-}(u^{N}_{h})(t,y;h)$ for $t\in [0,1]$. We distinguish two situations:
\begin{itemize}
\item If $(4h^{-\delta/2})^{-1}-[(4h^{-\delta/2})^{-1}]<1/2$, where we denoted by $[z]$ the integer part of $z$ we take
\[
N:=[(4h^{-\delta/2})^{-1}] 
\]
and we deduce that  $Tr_{-}(u^{N}_{h})(t,y;h)$ is essentially supported for $t$ in an interval strictly contained in $[0,1]$ while $Tr_{+}(u^{N}_{h})(t,y;h)$ has a nontrivial contribution only on
\[
[4Na^{1/2}-2(1+c_{0})a^{1/2},4Na^{1/2}-2(1-c_{0})a^{1/2}].
\]
A direct computation shows that for this choice of $N$
\[
4Na^{1/2}-2(1+c_{0})a^{1/2} \simeq 4h^{-\delta}[(4h^{-\delta})^{-1}]+\frac{1}{2}(4h^{-\delta})^{-1}> 1.
\]
Therefor, on $[0,1]$ the contribution of $Tr_{+}(u^{N}_{h})(t,y;h)$ is canceled by $Tr_{-}(u^{N-1}_{h})(t,y;h)$, while the contribution of $Tr_{-}(u^{N}_{h})$ equals $O_{L^{2}}(h^{\infty})$ since it is essentially supported outside $[0,1]$.
 
\item If $(4h^{-\delta/2})^{-1}-[(4h^{-\delta/2})^{-1}]\geq1/2$, we set
\[
N:=[(4h^{-\alpha/2})^{-1}] +1
\]
and we conclude using the same arguments as in the preceding case.
\end{itemize}

\end{proof}

\section{End of the proof of Theorem \ref{thm1}}
Let $U_{h}$ be the approximate solution to the wave equation \eqref{ondes} defined by \eqref{foru}. In \eqref{estnorm21} we obtained $\|U_{h}|_{t=0}\|_{L^{2}(\Omega)}\simeq h^{1+\delta/4}$. We now consider the $L^{2}$-normalized approximate solution $W_{h}=\frac{1}{\|U_{h}|_{t=0}\|_{L^{2}(\Omega)}}U_{h}$. We also let $V_{h}=W_{h}+w_{h}$, where $V_{h}$ solves
\begin{equation}
\square V_{h}=0,\quad V_{h}|_{[0,1]\times\partial\Omega}=0,
\end{equation}
with initial data
\begin{equation}
V_{h}|_{t=0}=W_{h}|_{t=0},\quad \partial_{t}V_{h}|_{t=0}=\partial_{t}W_{h}|_{t=0}.
\end{equation}
\begin{prop}\label{proper}
Under the preceding assumptions $w_{h} $ satisfies 
\begin{equation}\label{err}
\|\square w_{h}\|_{L^{2}(t\in [0,1],L^{2}(\Omega))}=O(h^{-\delta}),\quad w_{h}|_{\partial\Omega}=O_{L^{2}}(h^{\infty}),
\end{equation}
\begin{equation}
w_{h}|_{t=0}=0,\quad \partial_{t}w_{h}|_{t=0}=0.
\end{equation}
\end{prop}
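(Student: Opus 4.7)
The plan is to exploit the fact that $V_h$ is an exact solution matching $W_h$'s initial data, so the three assertions for $w_h = V_h - W_h$ reduce, respectively, to (a) a triviality from construction, (b) a restatement of \eqref{diruh01} after normalization, and (c) an estimate on $\square W_h = -\square w_h$ alone. Concretely, because $V_h|_{t=0} = W_h|_{t=0}$ and $\partial_t V_h|_{t=0} = \partial_t W_h|_{t=0}$, the Cauchy data of $w_h$ vanishes identically; and because $V_h|_{[0,1]\times\partial\Omega} = 0$, one has $w_h|_{[0,1]\times\partial\Omega} = -W_h|_{[0,1]\times\partial\Omega} = -U_h|_{[0,1]\times\partial\Omega}/\|U_h|_{t=0}\|_{L^2(\Omega)}$, which is $O_{L^2}(h^\infty)$ by \eqref{diruh01} together with the polynomial bound $\|U_h|_{t=0}\|_{L^2(\Omega)} \simeq h^{1+\delta/4}$ from \eqref{estnorm21}. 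The nontrivial point is the estimate $\|\square W_h\|_{L^2([0,1]\times\Omega)} = O(h^{-\delta})$.

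For this, I would start from formula \eqref{boxx}: because the symbol $g$ was designed in \eqref{gforme} to be constant along the integral curves of $Z = \partial_s - 2(1+a)^{1/2}\partial_t$, the full $O(h)$ transport contribution vanishes in \eqref{apros}, and only the $h^2 \partial_t^2 g$ term survives. This produces the explicit prefactor $h^{-\delta}/(4(1+a))$ and replaces $\varrho$ by $\partial_1^2 \varrho$. The same computation applied to each $u_h^n$ yields
\[
\square u_h^n(t,x,y) = \frac{h^{-\delta}}{4(1+a)} \int e^{\frac{i}{h}\Phi_n} (\partial_1^2 \varrho^n)\!\left(\tfrac{t+2(1+a)^{1/2}s}{2(1+a)^{1/2}a^{1/2}}-2n,\eta,\lambda\right) \Psi(\eta)\, ds\, d\eta,
\]
which is structurally identical to $u_h^n$ with a symbol still lying in $\mathcal{S}_{K_0}(\lambda/n)$ (differentiation in the first argument preserves this class, with uniformly controlled seminorms, by Proposition \ref{propimportant}).

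Next I would combine this with Lemmas \ref{suppo}--\ref{lemunic}, which guarantee that the time supports of the $u_h^n$ (and hence of the $\square u_h^n$) are essentially disjoint. This gives
\[
\|\square U_h\|_{L^2([0,1]\times\Omega)}^2 = \sum_{n\le N} \|\square u_h^n\|_{L^2(I_n\times\Omega)}^2 + O(h^\infty),
\]
where only $\simeq h^{-\delta/2}$ intervals $I_n$ of length $\simeq h^{\delta/2}$ lie in $[0,1]$. Applying Proposition \ref{propnorm} of the Appendix to the modified symbol $\partial_1^2 \varrho^n$ (the $L^2(\Omega)$ norm is the same as for $\varrho^n$ up to constants, i.e.\ $\simeq h^{1+\delta/4}$ at each fixed $t\in I_n$), each term contributes $h^{-\delta} \cdot h^{\delta/4} \cdot h^{1+\delta/4} = h^{1-\delta/2}$. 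Summing yields $\|\square U_h\|_{L^2} \simeq h^{-\delta/4}\cdot h^{1-\delta/2} = h^{1-3\delta/4}$, and dividing by $\|U_h|_{t=0}\|_{L^2(\Omega)} \simeq h^{1+\delta/4}$ gives $\|\square w_h\|_{L^2([0,1]\times\Omega)} = O(h^{-\delta})$, as required.

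The main obstacle is the transition from $\square u_h^n$ to its $L^2$-norm, i.e.\ justifying that Proposition \ref{propnorm} applies \emph{verbatim} after replacing $\varrho^n$ by $\partial_1^2 \varrho^n$. This requires checking that differentiation preserves the $\mathcal{S}_{K_0}(\lambda/n)$ seminorms uniformly in $n$, which follows from the explicit convolution representation \eqref{convo}--\eqref{gmare} and the stationary-phase bookkeeping already carried out in Proposition \ref{propimportant}. A secondary point is ensuring the bookkeeping of the number of active reflections: the bound $N\lesssim \lambda h^\epsilon$ from Proposition \ref{propimportant} is compatible with the roughly $a^{-1/2} = h^{-\delta/2}$ intervals $I_n$ intersecting $[0,1]$, since $\delta = (1-\epsilon)/2$ was chosen precisely so that $Na^{1/2}\simeq 1$.
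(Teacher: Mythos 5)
Your proposal is correct and follows essentially the same route as the paper's proof: the Cauchy data and boundary conditions for $w_h$ are immediate from the construction of $V_h$ and \eqref{diruh01}, and the $L^2$ estimate on $\square w_h$ is obtained by invoking \eqref{boxx} (which gives the $h^{-\delta}$ prefactor and replaces $\varrho$ by $\partial_1^2\varrho$, still in the class $\mathcal{S}_{K_0}(\lambda/n)$), then using the almost-disjoint time supports from Lemmas \ref{suppo}--\ref{lemunic} to reduce to estimating individual $\|\square u_h^k(t,.)\|_{L^2(\Omega)}$ via Proposition \ref{propnorm} at $r=2$, and finally summing over $k\le N$ using $Na^{1/2}\simeq 1$. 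The only cosmetic difference is that the paper sums over intervals $J_k$ of length $\simeq 4a^{1/2}$ (picking up a factor $8$ because up to three cusps overlap each $J_k$), whereas you used the intervals $I_n$; the resulting bound $\|\square w_h\|_{L^2([0,1]\times\Omega)}=O(h^{-\delta})$ is identical.
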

\begin{proof}
If we set $\alpha(h):=\|U_{h}|_{t=0}\|_{L^{2}(\Omega)}\simeq h^{1+\delta/4}$, then one has
\begin{align}
\|\square w_{h}\|^{2}_{L^{2}(t\in [0,1],L^{2}(\Omega))} & =\alpha(h)^{-2}\|\sum_{n=0}^{N}\square u^{n}_{h}\|^{2}_{L^{2}(t\in [0,1])L^{2}(\Omega)}\\
& \lesssim  \alpha(h)^{-2}\sum_{k\leq N/4}\int_{J_{k}}\|\sum_{n=0}^{N}\square u^{n}_{h}\|^{2}_{L^{2}(\Omega)}dt+O(h^{\infty}) \\
& \lesssim 8 \alpha(h)^{-2}\sum_{k\leq N/4}\int_{J_{k}}\|\square u^{k}_{h}\|^{2}_{L^{2}(\Omega)}+O(h^{\infty}),
\end{align}
where 
\[
J_{k}:=[4a^{1/2}k-2a^{1/2},4a^{1/2}k+2a^{1/2}],
\]
and where we used the fact that for each $n$ there are at most three cusps to consider for $t\in J_{k}$ as shown in Lemma \ref{lemunic}. Let us estimate $\|\square u^{k}_{h}(t,.)\|_{L^{2}(\Omega)}$ for $t\in J_{k}$. The proof of Proposition \ref{propnorm} of the Appendix applied to $\square u^{k}_{h}$ (computed in \eqref{boxx}) yields
\[
\|\square u^{k}_{h}(t,.)\|_{L^{2}(\Omega)}\lesssim h^{-\delta+1+\delta/4},
\]
since the assumption $\varrho\in\mathcal{S}_{[-c_{0},c_{0}]}(\lambda)$ implies that $\sup_{z}|\partial^{2}_{z}\varrho|\leq C$ for some constant $C$ independent of $\lambda$ and one can bound from above the $L^{2}(\Omega)$ norm of $\square u^{k}_{h}$ (notice that the only difference between the estimates concerning $u^{k}_{h}$ is that instead of $\varrho^{n}$ we now have $\partial^{2}\varrho^{n}$ which we handle in the same way).
Consequently we obtain
\[
\|\square w_{h}\|^{2}_{L^{2}(t\in[0,1],L^{2}(\Omega))}\lesssim a(h)^{-2}\sum_{k\leq N/4}|J_{k}|h^{-2\delta+2+\delta/2}
\lesssim h^{-2\delta},
\]
since $|J_{k}|$ are of size $a^{1/2}$, $k\leq N/4$ and $Na^{1/2}\simeq 1$.
\end{proof}
\begin{cor}\label{cor}
If $(q,r)$ is a sharp wave-admissible pair in dimension two then $w_{h}$ satisfies
\begin{equation}
\|w_{h}\|_{L^{q}([0,1],L^{r}(\Omega))}\leq Ch^{1-\delta-2(\frac{1}{2}-\frac{1}{r})},
\end{equation}
where $C$ is some constant independent of $h$.
\end{cor}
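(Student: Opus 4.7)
The strategy is to view $w_h$ as the solution of an inhomogeneous Dirichlet wave equation with zero Cauchy data and to combine the standard energy estimate with the two-dimensional Gagliardo--Nirenberg inequality. The key observation enabling this is that $w_h = V_h - W_h$ inherits localization at $y$-frequency $1/h$: since $\Delta_D$ is translation-invariant in $y$, the spectral cutoff $\psi(hD_y)$ commutes with $\square$, so the flow preserves this localization, and by finite speed of propagation $w_h$ remains compactly supported in $x$ on the time interval $[0,1]$.

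First I would absorb the boundary trace $w_h|_{[0,1]\times\partial\Omega} = O_{L^2}(h^\infty)$ by subtracting a smooth extension of it back into $\Omega$, which only perturbs $w_h$ by $O(h^\infty)$ in all relevant norms. This reduces matters to the homogeneous Dirichlet problem with zero Cauchy data and source $f = -\square W_h$. The standard energy identity, together with H\"older in $t\in[0,1]$ and Proposition \ref{proper}, then yields
\[
\|\nabla w_h\|_{L^\infty_t L^2_x} + \|\partial_t w_h\|_{L^\infty_t L^2_x} \lesssim \|\square w_h\|_{L^1_t L^2_x} \lesssim \|\square w_h\|_{L^2_t L^2_x} \lesssim h^{-\delta}.
\]

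Next, the $y$-frequency localization of $w_h$ at scale $1/h$ gives the Bernstein-type bound $\|w_h\|_{L^2_x}\lesssim h\|\partial_y w_h\|_{L^2_x}$, and therefore $\|w_h\|_{L^\infty_t L^2_x}\lesssim h^{1-\delta}$. Extending $w_h$ by zero to $\mathbb{R}^2$ (which is harmless since the trace is negligible and $w_h$ is compactly supported) and applying the two-dimensional Gagliardo--Nirenberg inequality
\[
\|w_h\|_{L^r(\mathbb{R}^2)}\lesssim \|w_h\|_{L^2}^{\,1-s}\,\|\nabla w_h\|_{L^2}^{\,s},\qquad s=2\bigl(\tfrac12-\tfrac1r\bigr)\in(\tfrac12,1) \ \text{for}\ r>4,
\]
produces
\[
\|w_h\|_{L^\infty_t L^r_x}\lesssim h^{(1-s)(1-\delta)-\delta s}=h^{1-\delta-2(\frac12-\frac1r)}.
\]
Since $[0,1]$ has finite measure, $\|w_h\|_{L^q_t L^r_x}\le \|w_h\|_{L^\infty_t L^r_x}$, which is the announced bound.

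The main technical point of care lies in the boundary-trace absorption in the first step: one must make precise that lifting the $O(h^\infty)$ trace into $\Omega$ does not destroy either the frequency-localization structure in $y$ (easy, since the lifting can be taken to respect the $y$-cutoff) nor the $L^2_tL^2_x$ bound on $\square(\cdot)$ (also routine, the lifting contributing only $O(h^\infty)$ errors). Once that is handled, the remaining ingredients — energy identity, Bernstein inequality in $y$ and Gagliardo--Nirenberg — are entirely classical and applied to a function whose source has been estimated in Proposition \ref{proper}.
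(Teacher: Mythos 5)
Your proof is correct and follows essentially the same three-step scheme as the paper: an energy estimate from Proposition \ref{proper} to bound $\|\nabla w_h\|_{L^\infty_t L^2}$ by $h^{-\delta}$, a Bernstein-type gain of one power of $h$ coming from the $y$-frequency localization at scale $1/h$, and an interpolation step to pass to $L^r$. The only difference is cosmetic: the paper inserts the Bernstein gain at the level of the source by applying its Proposition \ref{proph} to $\square w_h$ inside Duhamel's formula (using that $\sin((t-\tau)\sqrt{-\Delta_D})/\sqrt{-\Delta_D}$ maps $H^{-1}$ to $L^2$), and then interpolates in the Sobolev scale $H^\sigma$ before applying Sobolev embedding, whereas you apply Bernstein to $w_h$ itself and phrase the interpolation as Gagliardo--Nirenberg; these are the same inequalities with the same numerology. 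Your explicit lifting of the $O_{L^2}(h^\infty)$ boundary trace before invoking Duhamel is a small refinement that the paper leaves implicit.
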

\begin{proof}
Write the Duhamel formula for $w_{h}$,
\begin{equation}
w_{h}(t,x,y)=\int_{0}^{t}\frac{\sin(t-\tau)\sqrt{-\Delta_{D}}}{\sqrt{-\Delta_{D}}}(\square w_{h}(\tau,.))d\tau.
\end{equation}
Using the Minkowsky inequality and Proposition \ref{proper} we find
\begin{multline}\label{esterr}
\|w_{h}\|_{L^{\infty}([0,1],H^{1}(\Omega))}=\|\int_{0}^{t}\frac{\sin(t-\tau)\sqrt{-\Delta_{D}}}{\sqrt{-\Delta_{D}}}(\square w_{h}(\tau,.))d\tau\|_{L^{\infty}([0,1],H^{1}(\Omega))}\\
\leq\int_{0}^{1} \|\frac{\square w_{h}(\tau,.)}{\sqrt{-\Delta_{D}}}\|_{H^{1}(\Omega)}d\tau\simeq \|\square w_{h}\|_{L^{1}([0,1],L^{2}(\Omega))}\leq Ch^{-\delta}.
\end{multline}
\begin{rmq} 
Notice that since we are dealing with the Dirichlet Laplace operator $\Delta_{D}$ inside a bounded domain there is no problem in estimating $\|(\sqrt{-\Delta_{D}})^{-1}f\|_{H^{1}(\Omega)}$ by $\|f\|_{L^{2}(\Omega)}$. Indeed, let $(e_{\nu_{j}})_{j\geq 0}$ be the eigenbasis of $L^{2}(\Omega)$ consisting in eigenfunctions of $-\Delta_{D}$ associated to the eigenvalues $\nu^{2}_{j}$ considered in non-decreasing order and decompose $f=\sum_{j\geq 0}f_{j}e_{\nu_{j}}$, $f_{j}=<f,e_{\nu_{j}}>$. Then
\[
(\sqrt{-\Delta_{D}})^{-1}f\simeq \sum_{j}\frac{1}{\nu_{j}}f_{j}e_{\nu_{j}}
\]
and since $\nu_{1}\geq c>0$ for some fixed constant $c>0$ we can estimate
\[
\|(\sqrt{-\Delta_{D}})^{-1}f\|^{2}_{H^{1}(\Omega)}\simeq \sum_{j\geq 0}\frac{(1+\nu^{2}_{j})}{\nu^{2}_{j}}\|f_{j}\|^{2}_{L^{2}(\Omega)}.
\]
Take now $C=\sup_{j}(1+1/\nu^{2}_{j})\leq 1+1/c^{2}$, then
\begin{equation}\label{h1l2}
\|(\sqrt{-\Delta_{D}})^{-1}f\|_{H^{1}(\Omega)}\leq \sqrt{C}\|f\|_{L^{2}(\Omega)}.
\end{equation}
If, instead, we were considering the Neumann Laplacian $\Delta_{N}$ inside the domain $\Omega$, in order to obtain bounds like in \eqref{h1l2} we had to introduce a cut-off function $\Psi\in C^{\infty}_{0}(\mathbb{R})$ equal to $1$ close to $0$ and decompose a function $f$
\[
f=\Psi(-\Delta_{N})f+(1-\Psi(-\Delta_{N}))f
\]
and treat separately the contribution $\Psi(-\Delta_{N})f$ obtained for small frequencies of $f$.
\end{rmq}
In order to obtain estimates for the $L^{\infty}([0,1],L^{r}(\Omega))$ norms of $w_{h}$ we also need to establish bounds from above for its $L^{\infty}([0,1],L^{2}(\Omega))$ norms. We need the next result:
\begin{prop}\label{proph}
Let $f(x,y):\Omega\rightarrow\mathbb{R}$ be localized at frequency $1/h$ in the $y\in\mathbb{R}^{d-1}$ variable, i.e. such that there exists $\psi\in C^{\infty}_{0}(\mathbb{R}^{d-1}\setminus 0)$ with $\psi(hD_{y})f=f$. Then there exists a constant $C>0$ independent of $h$ such that one has
\[
\|f\|_{H^{-1}(\Omega)}\leq Ch\|f\|_{L^{2}(\Omega)}.
\]
\end{prop}
\begin{proof}
Since $\chi(hD_{y})f=f$ we have
\[
\|f\|_{H^{-1}(\Omega)}=\sup_{\|g\|_{H^{1}(\Omega)}\leq 1}\int \psi f\bar{g}\leq\|f\|_{L^{2}(\Omega)}\times \sup_{\|g\|_{H^{1}(\Omega)}\leq 1}\|\psi(hD_{y})g\|_{L^{2}(\Omega)}
\]
\[
\leq h\|f\|_{L^{2}(\Omega)}\|\tilde{\psi}(hD_{y})\nabla_{y}g\|_{L^{2}(\Omega)}\leq C h\|f\|_{L^{2}(\Omega)},
\]
where we set $\tilde{\psi}(\eta)=|\eta|^{-1}\psi(\eta)$.
\end{proof}
Using again Duhamel's formula written above, we have
\begin{equation}
\|w_{h}\|_{L^{\infty}([0,1],L^{2}(\Omega))}\lesssim\|\square w_{h}\|_{L^{1}([0,1],H^{-1}(\Omega))}
\end{equation}
and from Proposition \ref{proph} applied to $f=\square w_{h}$ we deduce
\begin{equation}\label{err2}
\|w_{h}\|_{L^{\infty}([0,1],L^{2}(\Omega))}\lesssim h\|\square w_{h}\|_{L^{1}([0,1],L^{2})}\lesssim Ch^{1-\delta}.
\end{equation}
Interpolation between \eqref{esterr} and \eqref{err2} with weights $\sigma$ and $1-\sigma$ yields
\begin{equation}
\|w_{h}\|_{L^{\infty}([0,1],H^{\sigma}(\Omega))}\leq Ch^{1-\delta-\sigma}.
\end{equation}
We take $\sigma=2(\frac{1}{2}-\frac{1}{r})$ and use the Sobolev inequality in order to obtain
\begin{equation}
\|w_{h}\|_{L^{q}([0,1],L^{r}(\Omega))}\leq Ch^{1-\delta-2(\frac{1}{2}-\frac{1}{r})}.
\end{equation}
\end{proof}
\emph{End of the proof of Theorem} \ref{thm1}

From Corollary \ref{cor} we see that the norm $\|w_{h}\|_{L^{q}(([0,1],L^{r}(\Omega))}$ is much smaller then the norm of $\|W_{h}\|_{L^{q}(([0,1],L^{r}(\Omega))}$: in fact we have to check that the following inequality holds for $r>4$
\begin{equation}\label{label}
h^{1-\delta-2(\frac{1}{2}-\frac{1}{r})}\ll h^{\frac{1}{3}+\frac{5}{3r}-1-\frac{\delta}{4}}
\end{equation}
which is  obviously true. 
Let $\beta<\beta(r)=\frac{3}{2}(\frac{1}{2}-\frac{1}{r})+\frac{1}{6}(\frac{1}{4}-\frac{1}{r})$. We have
\begin{align}
h^{\beta}\|V_{h}\|_{L^{q}([0,1],L^{r}(\Omega))} & \geq h^{\beta}(\|W_{h}\|_{L^{q}([0,1],L^{r}(\Omega))}-\|w_{h}\|_{L^{q}([0,1],L^{r}(\Omega))})\\
& \geq \frac{1}{2}h^{\beta}\|W_{h}\|_{L^{q}([0,1],L^{r}(\Omega))}\gg 1.
\end{align}
On the other hand $\|V_{h}\|_{L^{2}(\Omega)}\simeq 1$,  $h\|\partial_{t}V_{h}|_{t=0}\|_{L^{2}(\Omega)}\simeq 1$ , thus  for $\beta<\beta(r)$ the (exact) solution $V_{h}$ satisfies
\begin{equation}
h^{\beta}\|V_{h}\|_{L^{q}([0,1],L^{r}(\Omega))}\gg
\|V_{h}|_{t=0}\|_{L^{2}(\Omega)}.
\end{equation}
The proof of Theorem \ref{thm1} is complete.

\section{Appendix}

\subsection{Proof of Lemma \ref{lem1} ( $TT^{*}$ argument)}\label{ttstar}
\begin{proof}
Let $0<T_{0}<\infty$ and denote by $T$ the operator which to a
given $u_{0}\in L^{2}(\mathbb{R}^{n})$ associates $U(t)\psi(hD)u_{0}\in
L^{q}([0,T_{0}],L^{r}(\mathbb{R}^{n}))$, where by $U(t)=e^{-\frac{it}{h}G}$ we denoted the linear flow. Its adjoint
$T^{*}:L^{q'}([0,T_{0}],L^{r'}(\mathbb{R}^{n}))\rightarrow L^{2}(\mathbb{R}^{n})$ is given by
\begin{equation}
(T^{*}g)(x)=\int_{0}^{T_{0}}\psi^{*}U(-t)g(t,x)dt
\end{equation}
thus we can write
\begin{equation}\label{tt}
(TT^{*}g)(t,x)=\int_{0}^{T_{0}}U(t)\psi\psi^{*}U(-s)g(s,x)ds=\int_{0}^{T_{0}}U(t-s)\psi\psi^{*}g(s,x)ds
\end{equation}
since $\psi$ has constant coefficients. Suppose that the dispersive
estimate 
\begin{equation}\label{dispersion2}
\|e^{-\frac{it}{h}G}\psi(hD)u_{0}\|_{L^{\infty}(\mathbb{R}^{n})}\lesssim (2\pi
h)^{-n}\gamma_{n,h}(\frac{t}{h})\|\psi(hD)u_{0}\|_{L^{1}(\mathbb{R}^{n})}
\end{equation}
holds for a function $\gamma_{n,h}:\mathbb{R}\rightarrow\mathbb{R}_{+}$.
Interpolation between \eqref{dispersion2} and the energy estimates
gives
\begin{equation}\label{rr}
\|e^{-\frac{it}{h}G}\psi(hD)u_{0}\|_{L^{r}(\mathbb{R}^{n})}\leq
Ch^{-n(1-\frac{2}{r})}\gamma_{n,h}(\frac{t}{h})^{1-\frac{2}{r}}\|u_{0}\|_{L^{r'}(\mathbb{R}^{n})},
\end{equation}
and from \eqref{tt} and \eqref{rr} we deduce
\begin{equation}
\|TT^{*}\|_{L^{q}((0,T_{0}],L^{r}(\mathbb{R}^{n}))}\leq
Ch^{-n(1-\frac{2}{r})}\|\int_{0}^{T_{0}}\gamma_{n,h}(\frac{t-s}{h})^{1-\frac{2}{r}}\|g(s)\|
_{L^{r'}(\mathbb{R}^{n})}ds\|_{L^{q}[0,T_{0}]}.
\end{equation}
The application $|t|^{-\frac{2}{q}}\ast :
L^{q'}\rightarrow L^{q}$ is bounded for $q>2$ by Hardy-Littlewood-Sobolev theorem, thus we obtain
\eqref{estim},
\begin{equation}
\|T\|^{2}_{L^{2}\rightarrow L^{q}((0,T_{0}],L^{r}(\mathbb{R}^{n}))}\leq
h^{-n(1-\frac{2}{r})}\sup_{t\in (0,T_{0}]}
t^{\frac{2}{q}}\gamma(\frac{t}{h})^{1-\frac{2}{r}}\leq
Ch^{-2\beta}\Big(\sup_{s\in (0,\frac{T_{0}}{h}]}
s^{\alpha}\gamma(s))\Big)^{1-\frac{2}{r}}.
\end{equation}
\end{proof}

\subsection{Propagation of positivity}\label{propapos}
On $\mathbb{C}^{2m}=\mathbb{C}^{m}_{z}\times\mathbb{C}^{m}_{\zeta}$ one considers the symplectic $2$-form $\sigma=dz\wedge d\zeta=:\sigma_{\mathbb{R}}+i\sigma_{\mathbb{I}}$.  
\begin{dfn}\label{dfnlagrang}
Let $\Lambda$ be a smooth manifold of $\mathbb{C}^{2m}$. It is called
\begin{enumerate}
\item  $\mathbb{R}$ (resp. $\mathbb{I}$, $\mathbb{C}$)-Lagrangian if its dimension on $\mathbb{R}$ is $2m$ and $\sigma_{\mathbb{R}}|_{\Lambda}=0$ (resp. $\sigma_{\mathbb{I}}|_{\Lambda}=0$, $\sigma_{\mathbb{C}}|_{\Lambda}=0$); \item $\mathbb{R}$ (resp. $\mathbb{I}$)-symplectic if $\sigma_{\mathbb{R}}|_{T\Lambda}$ (resp. $\sigma_{\mathbb{I}}|_{T\Lambda}$) is nondegenerate; \item positive at some point $\rho\in\Lambda$ if the (real-valued) quadratic form $Q:u\rightarrow\frac{1}{i}\sigma(u,\bar{u})$ is positive definite on the tangent space $T_{\rho}\Lambda$ of $\Lambda$ at $\rho$.
\end{enumerate}
\end{dfn}
\begin{lemma}
Assume that the projection $\Lambda\ni(z,\zeta)\rightarrow z\in\mathbb{C}^{m}$ is a local diffeomorphism. Then $\Lambda$ is a $\mathbb{C}$-Lagrangian if and only if it is locally described by an equation of the type $\zeta=\frac{\partial\Phi}{\partial z}$, where $\Phi$ is a holomorphic function of $z$ and we write (locally) $\Lambda=\Lambda_{\Phi}$.
\end{lemma}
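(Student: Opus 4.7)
My plan is to prove the two implications separately, using the natural parametrization of $\Lambda$ by the complex coordinate $z$ coming from the local diffeomorphism hypothesis.

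For the easy direction ($\Leftarrow$), assume $\Lambda = \Lambda_\Phi = \{(z, \partial\Phi/\partial z) : z \in U\}$ with $\Phi$ holomorphic on an open $U \subset \mathbb{C}^m$. Then $\Lambda$ is a complex submanifold of complex dimension $m$, hence of real dimension $2m$. Using $z$ as coordinates on $\Lambda$ and differentiating $\zeta_j = \partial\Phi/\partial z_j$ gives $d\zeta_j = \sum_k (\partial^2\Phi/\partial z_k\partial z_j)\,dz_k$, so
\[
\sigma|_\Lambda = \sum_j dz_j \wedge d\zeta_j = \sum_{j,k} \frac{\partial^2 \Phi}{\partial z_k\,\partial z_j}\, dz_j \wedge dz_k = 0
\]
by symmetry of the Hessian. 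Since $\sigma_{\mathbb{C}}$ is the full complex symplectic form, its vanishing on $\Lambda$ implies the vanishing of $\sigma_\mathbb{R}$ and $\sigma_\mathbb{I}$ as well, and $\Lambda$ is $\mathbb{C}$-Lagrangian.

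For the converse ($\Rightarrow$), suppose $\Lambda$ has real dimension $2m$ and $\sigma_\mathbb{C}|_\Lambda = 0$, with the projection $\Lambda \to \mathbb{C}^m_z$ a local diffeomorphism. Then locally $\Lambda$ is the graph of a smooth map $\zeta = f(z,\bar z)$, so the pullback of $\sigma$ reads
\[
\sigma|_\Lambda = \sum_j dz_j \wedge d f_j = \sum_{j,k}\frac{\partial f_j}{\partial z_k}\,dz_j\wedge dz_k + \sum_{j,k}\frac{\partial f_j}{\partial \bar z_k}\,dz_j\wedge d\bar z_k .
\]
The forms $dz_j\wedge dz_k$ (antisymmetric part) and $dz_j\wedge d\bar z_k$ are linearly independent as $(2,0)$ and $(1,1)$ type pieces in the $(z,\bar z)$ grading, and since in real coordinates $\sigma|_\Lambda = 0$ forces both pieces to vanish, I obtain first $\partial f_j/\partial\bar z_k = 0$ for all $j,k$, which is the Cauchy--Riemann condition saying that $f$ is holomorphic in $z$, and second that $\partial f_j/\partial z_k = \partial f_k/\partial z_j$, i.e.\ the holomorphic $1$-form $\omega := \sum_j f_j\, dz_j$ is $d$-closed.

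The final step is to conclude that $\omega$ is locally exact in the holomorphic category: by the holomorphic Poincaré lemma (a closed holomorphic $1$-form on a simply connected open set is the differential of a holomorphic function), there exists $\Phi$ holomorphic on a neighborhood with $f_j = \partial\Phi/\partial z_j$, so $\Lambda = \Lambda_\Phi$ locally. The only mildly delicate point is the bookkeeping in the second step, where one must justify that the $(2,0)$ and $(1,1)$ components of $\sigma|_\Lambda$ vanish independently; this is simply the fact that $\{dz_j\wedge dz_k,\ dz_j\wedge d\bar z_k\}$ is a linearly independent family in $\Lambda^2(T^*_\rho\Lambda\otimes\mathbb{C})$ once $(z,\bar z)$ are used as real coordinates. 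Everything else is a direct computation and an invocation of the standard Poincaré lemma.
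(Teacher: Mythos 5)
Your proof is correct. Note that the paper itself gives no proof of this lemma: it is stated as a known fact in the appendix on propagation of positivity, with the references to Sj\"ostrand and Martinez supplied only after the subsequent Lemma \ref{lemlag} ("For the proofs see..."). What you have written is the standard argument those references contain, and every step holds up: the easy direction is the symmetry of the holomorphic Hessian against the antisymmetry of the wedge; in the converse, writing $\Lambda$ as a graph $\zeta=f(z,\bar z)$ via the local diffeomorphism hypothesis and expanding $\sigma|_{\Lambda}=\sum_j dz_j\wedge df_j$ in the basis $\{dz_j\wedge dz_k,\ dz_j\wedge d\bar z_k\}$ of $\Lambda^{2}(T^{*}_{\rho}\Lambda\otimes\mathbb{C})$ correctly forces both the $(1,1)$ coefficients (Cauchy--Riemann, so $f$ is holomorphic) and the antisymmetrized $(2,0)$ coefficients (closedness of $\omega=\sum_j f_j\,dz_j$) to vanish separately, after which the holomorphic Poincar\'e lemma on a ball produces $\Phi$. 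The one point worth flagging is that you only need $\sigma_{\mathbb{C}}|_{\Lambda}=0$ as in Definition \ref{dfnlagrang}; your remark that this also kills $\sigma_{\mathbb{R}}$ and $\sigma_{\mathbb{I}}$ is true but not needed for either implication. Your write-up could simply be cited in place of the external references if a self-contained appendix were desired.
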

\begin{lemma}
A $\mathbb{C}$-Lagrangian $\Lambda$ is positive at some point $\rho$ if and only if near $\rho$ it is of the form $\Lambda_{\Phi}$, where $\Phi$ is a holomorphic function such that the real symmetric matrix $(Im\frac{\partial^{2}\Phi}{\partial z_{j}\partial z_{k}})_{j,k=\overline{1,m}}$ is positive definite.
\end{lemma}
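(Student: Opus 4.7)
The plan is to prove both implications via a single Hessian computation that relates the positivity form $Q$ to $\mathrm{Im}\,\Phi''$, preceded by a short argument showing that positivity forces the $z$-projection $\pi_z \colon \Lambda \to \mathbb{C}^m_z$ to be a local diffeomorphism at $\rho$, so that the preceding lemma applies and $\Lambda$ can be written in generating-function form.

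For the reduction, assume $\Lambda$ is positive at $\rho$ and suppose $u \in T_\rho\Lambda$ is vertical, i.e.\ $u = (0, u'')$ with $u'' \in \mathbb{C}^m$. Writing $\sigma = \sum_j dz_j \wedge d\zeta_j$ one reads off $\sigma(u, \bar{u}) = 0$, hence $Q(u) = 0$; positive-definiteness then forces $u'' = 0$. So $d\pi_z|_{T_\rho\Lambda}$ is injective, and since $\dim_\mathbb{R}\Lambda = 2m = \dim_\mathbb{R}\mathbb{C}^m_z$ the projection is a local diffeomorphism at $\rho$. The preceding lemma then gives $\Lambda = \Lambda_\Phi$ near $\rho$ for some holomorphic $\Phi$ with $\zeta = \partial\Phi/\partial z$.

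For the main step, parametrize $T_\rho\Lambda_\Phi$ by $w \in \mathbb{C}^m$ through $u(w) = (w, \Phi''(z) w)$, where $\Phi''(z) = (\partial^2\Phi/\partial z_j \partial z_k)_{jk}$ is the complex Hessian. A direct computation, using the symmetry $\Phi''_{jk} = \Phi''_{kj}$ coming from holomorphy, yields
\[
\sigma(u(w), \overline{u(w)}) = \sum_{j,k} \bigl(\overline{\Phi''_{jk}} - \Phi''_{jk}\bigr)\, w_j \bar{w}_k = -2i \sum_{j,k} (\mathrm{Im}\,\Phi''_{jk})\, w_j \bar{w}_k,
\]
so that $Q(u(w)) = \pm 2 \sum_{j,k}(\mathrm{Im}\,\Phi''_{jk})\, w_j \bar{w}_k$, the overall sign being fixed by the author's orientation convention for $\sigma$. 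Writing $w = x + iy$ with $x,y \in \mathbb{R}^m$ and using that $\mathrm{Im}\,\Phi''$ is a real symmetric matrix, one checks
\[
\sum_{j,k} (\mathrm{Im}\,\Phi''_{jk})\, w_j \bar{w}_k = x^T (\mathrm{Im}\,\Phi'')\, x + y^T (\mathrm{Im}\,\Phi'')\, y,
\]
which is strictly positive for every $w \neq 0$ if and only if $\mathrm{Im}\,\Phi''(\rho)$ is positive definite as a real symmetric matrix. This establishes both implications simultaneously and at the same time verifies that $Q$ is real-valued. The only delicate issue is bookkeeping of sign and orientation conventions in $\sigma$ and in the definition of $Q$; the algebra is a few lines, and holomorphy is used exactly once, through the symmetry of $\Phi''$, which is precisely what isolates the imaginary part of the Hessian in $Q$.
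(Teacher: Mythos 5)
Your proof is correct, and it is worth noting that the paper does not actually prove this lemma: the whole subsection on propagation of positivity is closed with a pointer to Martinez and Sj\"ostrand, so your argument is a genuine self-contained substitute rather than a variant of something in the text. Both halves of your argument are sound: the observation that a vertical vector $u=(0,u'')$ is isotropic for $\sigma$, hence $Q$-null, correctly reduces the ``only if'' direction to the graph case via the preceding lemma (and the reduction is independent of any sign convention); and the Hessian computation, using the symmetry $\Phi''_{jk}=\Phi''_{kj}$ from holomorphy and the identity $\sum_{j,k}A_{jk}w_j\bar w_k=x^{T}Ax+y^{T}Ay$ for real symmetric $A$, is exactly the right calculation. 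The one point you leave open --- the overall sign --- can be pinned down: with $\sigma=dz\wedge d\zeta$ evaluated in the standard way, $\sigma(u,v)=\sum_j(u'_jv''_j-u''_jv'_j)$, your computation gives $Q(u(w))=\tfrac{1}{i}\sigma(u(w),\overline{u(w)})=-2\sum_{j,k}(\mathrm{Im}\,\Phi''_{jk})w_j\bar w_k$, which is \emph{negative} definite when $\mathrm{Im}\,\Phi''>0$. For the lemma to read as stated one must therefore take the opposite convention $\sigma((z,\zeta),(z',\zeta'))=\langle\zeta,z'\rangle-\langle\zeta',z\rangle$, i.e.\ $\sum_j d\zeta_j\wedge dz_j$; this is H\"ormander's convention in Definition~21.5.5 and Proposition~21.5.9, which the paper invokes elsewhere precisely to conclude that $\mathrm{Im}\,\nabla^2_y\Phi_0>0$ yields a strictly positive Lagrangian plane. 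So the discrepancy is an inconsistency in the paper's stated convention for $\sigma$, not a gap in your argument.
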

Let $q=q(z,\zeta)$ be a holomorphic function on an open subset $U\subset\mathbb{C}^{2m}$. Then as in the real domain one defines the Hamilton field of $q$ by the identity $\sigma(u,H_{q}(z,\zeta))=dq(z,\zeta)u$. One also defines the Hamilton flow $\exp sH_{q}(z,\zeta)$ for $s$ real, by 
\begin{equation}
\frac{\partial}{\partial s}\exp sH_{q}(z,\zeta)=H_{q}(\exp sH_{q}(z,\zeta)) 
\end{equation}
and one can easily prove that for any open subset $U'\subset\subset U$ and for any $s\in\mathbb{R}$ such that $\cup_{s'\in[0,s]}\exp s'H_{q}(U')\subset U$, the application $U'\ni(z,\zeta)\rightarrow\exp sH_{q}(z,\zeta)$ is a complex canonical transformation.
\begin{lemma}\label{lemlag}
Let $\Lambda$ be a $\mathbb{C}$-Lagrangian submanifold of $\mathbb{C}^{2m}$ and assume that there exists $\rho\in\Lambda\cap\mathbb{R}^{2m}$ such that $\Lambda$ is positive at $\rho$. Moreover assume that there exists a complex canonical transformation $\kappa$ defined on a complex domain containing $\mathbb{R}^{2m}$ such that $\kappa(\mathbb{R}^{2m})\subset\mathbb{R}^{2m}$ and $\kappa(\rho)\in\Lambda$. Then $\Lambda$ is positive at $\kappa(\rho)$.
\end{lemma}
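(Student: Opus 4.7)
The plan is to reduce positivity at $\kappa(\rho)$ to positivity at $\rho$ by exploiting two features of $\kappa$: it preserves the symplectic form (since it is a complex canonical transformation) and it commutes with complex conjugation (since it is holomorphic and sends the real locus into itself). Note that although the statement is phrased in terms of $\Lambda$ at $\kappa(\rho)$, the natural conclusion is that $\kappa(\Lambda)$, which is automatically a $\mathbb{C}$-Lagrangian, is positive at $\kappa(\rho)$; in the intended application to the eikonal equation, $\Lambda$ is carried by the real Hamilton flow of $q$, so the two formulations coincide.

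First, I would establish the reality symmetry. Since $\kappa$ is holomorphic on an open set $U\supset\mathbb{R}^{2m}$ and satisfies $\kappa(\mathbb{R}^{2m})\subset\mathbb{R}^{2m}$, expanding $\kappa$ in power series around any point of $\mathbb{R}^{2m}$ shows that all Taylor coefficients are real. Hence, on a complex neighbourhood of $\mathbb{R}^{2m}$, one has $\kappa(\bar w)=\overline{\kappa(w)}$, and differentiating gives
\[
 d\kappa_{\bar w}(\bar u)=\overline{d\kappa_{w}(u)}.
\]
At a real point $\rho\in\mathbb{R}^{2m}$, $\bar\rho=\rho$, so $d\kappa_{\rho}$ commutes with complex conjugation on the complexified tangent space.

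Second, since $\kappa$ is a complex canonical transformation, $d\kappa$ preserves the (complex) symplectic form, $\sigma(d\kappa\,u,d\kappa\,v)=\sigma(u,v)$. Combining this with the previous step, for $u\in T_{\rho}\Lambda$ one gets
\[
 Q_{\kappa(\rho)}(d\kappa\,u)=\tfrac{1}{i}\sigma\bigl(d\kappa\,u,\overline{d\kappa\,u}\bigr)=\tfrac{1}{i}\sigma\bigl(d\kappa\,u,d\kappa\,\bar u\bigr)=\tfrac{1}{i}\sigma(u,\bar u)=Q_{\rho}(u).
\]
Since $d\kappa$ is a $\mathbb{C}$-linear isomorphism $T_{\rho}\Lambda\to T_{\kappa(\rho)}\kappa(\Lambda)$, positivity of $Q_{\rho}$ on $T_{\rho}\Lambda$ transfers directly to positivity of $Q_{\kappa(\rho)}$ on $T_{\kappa(\rho)}\kappa(\Lambda)$, which is the desired statement. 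Under the invariance tacit in the lemma ($\kappa(\Lambda)$ is, at least germ-wise near $\kappa(\rho)$, contained in $\Lambda$), this gives the positivity of $\Lambda$ at $\kappa(\rho)$.

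The main obstacle is really the first step: one must verify cleanly that holomorphicity together with $\kappa(\mathbb{R}^{2m})\subset\mathbb{R}^{2m}$ forces $\kappa$ to intertwine complex conjugation. Once that symmetry is in place, the remainder is essentially the identity $Q_{\kappa(\rho)}\circ d\kappa=Q_{\rho}$, which is immediate from $\sigma$-invariance. One small technical point to record is that $\kappa(\Lambda)$ is again a $\mathbb{C}$-Lagrangian of the correct real dimension, so that checking positivity of $Q$ on its tangent space is meaningful; this is automatic because $d\kappa$ is a complex symplectomorphism.
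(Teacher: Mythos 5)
Your argument is essentially the paper's, just spelled out in more detail. The paper's proof (a two-line sketch referring to Martinez and Sjöstrand) makes exactly the observations you make: for $u\in T_{\kappa(\rho)}\Lambda$ one writes $u=d\kappa(\rho)v$ with $v\in T_{\rho}\Lambda$, then $\bar u=\overline{d\kappa(\rho)}\bar v=d\kappa(\bar\rho)\bar v=d\kappa(\rho)\bar v$ by the reality of $\kappa$ and of $\rho$, and the conclusion follows from $\sigma$-invariance of $d\kappa$. You have filled in the justification of $\overline{d\kappa(\rho)}=d\kappa(\bar\rho)$ via the Schwarz-reflection argument (real Taylor coefficients along $\mathbb{R}^{2m}$), which the paper leaves implicit. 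One genuinely useful addition in your write-up is that you explicitly flag the tacit hypothesis that $d\kappa(\rho)$ maps $T_\rho\Lambda$ to $T_{\kappa(\rho)}\Lambda$ — i.e.\ that $\Lambda$ is locally $\kappa$-invariant near $\rho$ — which is what makes the first line of the paper's proof valid; in the intended application $\kappa=\exp(sH_q)$ and $\Lambda=\Lambda_\Phi$ is swept out by the bicharacteristics of $q$, so this invariance holds, but it is not literally part of the lemma's hypotheses as stated. Recording that caveat, as you did, is a worthwhile clarification rather than a deviation.
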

\begin{proof}
Observe that if $u\in T_{\kappa(\rho)}\Lambda$, then $u=d\kappa(\rho)v$ with $v\in T_{\rho}\Lambda$ and $\bar{u}=\overline{d\kappa(\rho)}\bar{v}=d\kappa(\bar{\rho})\bar{v}=d\kappa{\rho}\bar{v}$. Take now $\kappa=\exp sH_{q}$. For the proofs see \cite{mart02}, \cite{sjos}.
\end{proof}

\subsection{Airy functions}\label{secairy}
We give below some of the basic properties of the function $Ai(z)$ which are used in this work. For $z\in\mathbb{R}$, $Ai(z)$ is defined by
\begin{equation}
Ai(z)=\frac{1}{2\pi}\int_{-\infty}^{\infty} e^{i(\frac{u^{3}}{3}+zu)}du=\frac{1}{2\pi}\int_{-\infty}^{\infty} \cos(\frac{u^{3}}{3}+zu)du.
\end{equation}
This integral is not absolutely convergent, but is well defined as the Fourier transform of a temperate distribution.
For pozitive $z>0$, $z\rightarrow\infty$ we have
\begin{equation}\label{ai3}
Ai(z)=O(z^{-\infty}),
\end{equation}
\begin{equation}\label{air}
Ai(-z)=A^{+}(-z)+A^{-}(-z)(\simeq \frac{1}{\sqrt{\pi}}z^{-\frac{1}{4}}\cos(\frac{2}{3}z^{3/2}-\frac{\pi}{4})),
\end{equation}
where
\begin{equation}\label{simai}
A^{\pm}(-z)\simeq z^{-1/4}e^{\mp\frac{2i}{3}z^{3/2}\pm\frac{i\pi}{2}-\frac{i\pi}{4}}(\sum_{j=0}^{\infty}a_{\pm,j}(-1)^{-j/2}z^{-3j/2}),\quad a_{\pm,0}=\frac{1}{4\pi^{3/2}}.
\end{equation}
\begin{prop}\label{propzeros}
All the zeroes of $Ai(z)$ are real and negative, say
\begin{equation}
Ai(-\omega_{j})=0,\quad 0>-\omega_{0}>-\omega_{1}>...\rightarrow -\infty.
\end{equation}
\end{prop}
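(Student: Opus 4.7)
\textbf{Plan of proof for Proposition \ref{propzeros}.}

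The plan is to proceed in three steps: rule out nonnegative real zeros, then rule out nonreal zeros, and finally use the asymptotic expansion \eqref{air} to conclude that the zero set is infinite and tends to $-\infty$. Throughout I use the fact that $Ai$ is an entire function satisfying the Airy ODE $Ai''(z)=z\,Ai(z)$, that $Ai(z)$ together with its derivative decays rapidly as $z\to+\infty$ by \eqref{ai3}, and that $Ai$ is real on the real axis (so $\overline{Ai(z)}=Ai(\bar z)$, in particular zeros come in complex-conjugate pairs).

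\emph{Step 1: no zero with $\mathrm{Re}(z)\ge 0$ on the real axis.} Suppose $Ai(z_0)=0$ for some real $z_0\ge 0$. Multiply the ODE by $Ai(z)$ and integrate from $z_0$ to $+\infty$. Integrating by parts and using that $Ai,Ai'\to 0$ at $+\infty$ together with $Ai(z_0)=0$ gives
\[
-\int_{z_0}^{\infty} Ai'(z)^{2}\,dz \;=\; \int_{z_0}^{\infty} z\,Ai(z)^{2}\,dz.
\]
For $z_0\ge 0$ the left side is $\le 0$ and the right side is $\ge 0$, so both integrals vanish; hence $Ai$ is constant on $[z_0,\infty)$, and by the decay at $+\infty$ it vanishes identically there, contradicting the uniqueness of solutions to the Airy ODE.

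\emph{Step 2: no zero off the real axis.} Suppose $Ai(z_0)=0$ with $z_0=x_0+iy_0$, $y_0\ne 0$. Set $\phi(t):=Ai(z_0+t)$ for $t\in[0,\infty)$; then $\phi''(t)=(z_0+t)\phi(t)$, and $\phi$ and $\phi'$ decay exponentially as $t\to+\infty$ by the known asymptotic of $Ai$ along horizontal rays in the right half-plane (since $\mathrm{Re}(z_0+t)^{3/2}\to+\infty$). Multiplying by $\overline{\phi(t)}$, integrating over $[0,\infty)$, and integrating by parts using $\phi(0)=0$ yields
\[
-\int_{0}^{\infty}|\phi'(t)|^{2}\,dt \;=\; \int_{0}^{\infty}(z_0+t)\,|\phi(t)|^{2}\,dt.
\]
Taking imaginary parts gives $y_0\int_{0}^{\infty}|\phi(t)|^2\,dt=0$, which forces $\phi\equiv 0$, hence $Ai\equiv 0$, a contradiction. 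Therefore every zero is real, and by Step 1 it must be strictly negative.

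\emph{Step 3: the sequence of zeros exists and tends to $-\infty$.} From \eqref{air} one has $Ai(-z)=\pi^{-1/2}z^{-1/4}\cos\!\bigl(\tfrac{2}{3}z^{3/2}-\tfrac{\pi}{4}\bigr)+O(z^{-7/4})$ as $z\to+\infty$. The leading cosine factor changes sign infinitely often as $z\to+\infty$, so by the intermediate value theorem $Ai$ has infinitely many (negative) real zeros accumulating only at $-\infty$. Since the zero set of an entire function is discrete, they may be ordered $0>-\omega_0>-\omega_1>\cdots\to-\infty$, which is the claimed statement. The only mildly delicate point is justifying Step 2 rigorously, namely checking that $\phi$ and $\phi'$ really decay at $+\infty$ so that the boundary terms in the integration by parts vanish; this is standard but requires invoking the complex asymptotics of $Ai$ away from the negative real axis rather than just \eqref{air}.
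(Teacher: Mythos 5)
Your proof is correct, but there is nothing in the paper to compare it against: Proposition \ref{propzeros} is stated in the Appendix as a classical fact about the Airy function, with no proof offered (the text passes directly from the asymptotic formulas \eqref{ai3}--\eqref{simai} to the proof of Lemma \ref{lem2}). So you have supplied a self-contained argument where the paper simply quotes a known result.

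That said, your argument is sound and complete. Step~1 is the standard energy (Sturm--Liouville) identity showing $Ai$ has no zero on $[0,\infty)$, and Step~2 extends it to the complex plane by integrating $Ai''\overline{Ai}=(z_0+t)|Ai|^2$ along the horizontal ray $z_0+t$ and taking imaginary parts; the conclusion $y_0\int|\phi|^2=0$ forces $y_0=0$. Step~3 correctly reads off the infinitude and the accumulation at $-\infty$ from the oscillatory asymptotic \eqref{air}, and discreteness of the zero set of a nonzero entire function gives the ordering. You are right to flag the one genuine subtlety: the vanishing of the boundary terms at $t=+\infty$ in Step~2 uses the uniform decay $Ai(z)\sim \tfrac{1}{2\sqrt\pi}z^{-1/4}e^{-\frac{2}{3}z^{3/2}}$ valid in a sector $|\arg z|<\pi/3$ (together with the corresponding bound for $Ai'$), which is not contained in \eqref{ai3} or \eqref{air}; one must cite the complex-sector asymptotics separately. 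With that caveat acknowledged, the proof is rigorous and would serve as a correct replacement for the unproved citation in the paper.
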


\subsubsection{Proof of Lemma \ref{lem2}}
\begin{proof}
Let $k\geq 0$ be fixed. For $x>0$ write
\begin{equation}
\psi(hD_{y})u(x,y)=\frac{1}{(2\pi h )^{d-1}}\int e^{iy\eta/h}
Ai(xh^{-2/3}|\eta|^{2/3}-\omega_{k})\psi(\eta)\hat{\varphi}(\frac{\eta}{h})d\eta.
\end{equation}
The change of variables $x=h^{2/3}\zeta$ reduces the proof to
the verification of the following inequality
\begin{equation}\label{ineg}
\|\psi_{1}(hD_{y})\varphi\|_{L^{r}(\mathbb{R}^{d-1})}\lesssim
\|\psi(hD_{y})u(h^{2/3}\zeta,.)\|_{L^{r}(\mathbb{R}_{+}\times\mathbb{R}^{d-1})}\lesssim\|\psi_{2}(hD_{y})\varphi\|_{L^{r}(\mathbb{R}^{d-1})}.
\end{equation}
Since $\psi$ is (compactly) supported away from $0$, let $\text{supp} (\psi)\subset \{0<|\eta_{0}|\leq|\eta|\leq|\eta_{1}|\}$. For $j\in\{0,1\}$, let $\epsilon_{j}>0$ be fixed and set $\zeta_{0}=(\omega_{k}-\omega_{0}+\epsilon_{0})|\eta_{0}|^{-2/3}$, $\zeta_{1}=(\omega_{k}+1+\epsilon_{1})|\eta_{1}|^{-2/3}$.
\begin{itemize}
\item For $\zeta\in [\zeta_{0},\zeta_{1}]$ we have, by Proposition \ref{propzeros}, 
\[
-\omega_{0}<-\omega_{0}+\epsilon_{0}=\zeta_{0}|\eta_{0}|^{2/3}-\omega_{k}\leq z=\zeta|\eta|^{2/3}-\omega_{k}\leq \zeta_{1}|\eta_{1}|^{2/3}-\omega_{k}=1+\epsilon_{1}.
\] 
For these values of the argument $z\in[-\omega_{0}+\epsilon_{0},1+\epsilon_{1}]$, $Ai(z)$ is positive, bounded from above and below which immediately yields, together with the assumption $\psi_{1}=\psi\psi_{1}$
\begin{equation}\label{esta1}
\|\psi_{1}(hD_{y})\psi(hD_{y})\varphi\|_{L^{r}(\mathbb{R}^{d-1})}\leq C_{1}
\|\psi(hD_{y})u(h^{\frac{2}{3}}\zeta,.)\|_{L^{r}([\zeta_{0},\zeta_{1}]\times\mathbb{R}^{d-1})},
\end{equation}
\[
C_{1}=\frac{\sup_{\eta} |\psi_{1}(\eta)|}{\inf_{z\in[-\omega_{0}+\epsilon_{0},1+\epsilon_{1}]}|Ai(z)|},
\]
consequently
\begin{equation}
\|\psi_{1}(hD_{y})\varphi\|_{L^{r}(\mathbb{R}^{d-1})}\leq C_{1}
\|\psi(hD_{y})u(h^{\frac{2}{3}}\zeta,.)\|_{L^{r}((0,\infty)\times\mathbb{R}^{d-1})}.
\end{equation}
\item On the other hand, since $Ai(z)$ is bounded for $z\in\mathbb{R}$, we obtain, since $\psi=\psi\psi_{2}$
\begin{equation}\label{esta2}
\|\psi(hD_{y})u(h^{\frac{2}{3}}\zeta,.)\|_{L^{r}((0,\infty)\times\mathbb{R}^{d-1})}\leq C_{2}\|\psi_{2}(hD_{y})\varphi\|_{L^{r}(\mathbb{R}^{d-1})},
\end{equation}
where 
\[
C_{2}=\sup_{\eta}|\psi(\eta)|\sup_{z}|Ai(z)|.
\] 
\end{itemize}
\end{proof}

\subsection{$L^{r}$ norms of the phase integrals associated to a cusp type
Lagrangian}\label{lrnorm}
\begin{prop}\label{propnorm}
For $t\in [4na^{1/2}-2a^{1/2}(1+c_{0}),4na^{1/2}+2a^{1/2}(1+c_{0})]$, the $L^{r}(\Omega)$ norm of a cusp $u^{n}_{h}(t,.)$ of the form \eqref{integral} are estimated (uniformly in $t$) by
\begin{equation}\label{estnorm}
\|u^{n}_{h}(t,.)\|_{L^{r}(\Omega)}\simeq
\left\{
                \begin{array}{ll}
                h^{\frac{1}{r}+\frac{1}{2}}a^{\frac{1}{r}-\frac{1}{4}}, \quad 2\leq r<4,\\
                h^{\frac{1}{3}+\frac{5}{3r}}, \quad r>4.\\
                \end{array}
                \right.
\end{equation}
\end{prop}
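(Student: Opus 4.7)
The plan is to reduce $u^n_h(t,\cdot)$ to a universal cusp integral by rescaling, then estimate that integral by combining stationary phase away from the cusp tip with Airy asymptotics near it. First I would shift in $(t,y)$ by $(4n(1+a)^{1/2}a^{1/2},\,-\tfrac{4}{3}na^{3/2})$ and perform the change of variables $x=a+aX$, $y-t(1+a)^{1/2}+\tfrac{4}{3}na^{3/2}=a^{3/2}Y$, and $s=a^{1/2}v$ in the integral \eqref{foru}. This turns the phase $\Phi_n/h$ into $\lambda\eta(Y+Xv+v^3/3)$ with $\lambda=a^{3/2}/h\gg 1$, produces a Jacobian $a^{5/2}$ in the $L^r$ computation and a prefactor $a^{1/2}$ in the amplitude, and turns the argument of $\varrho^n$ into $v+z_0(t)$ for a bounded function $z_0(t)$. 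By Proposition \ref{propimportant}, $\varrho^n\in\mathcal{S}_{K_0}(\lambda/n)$ uniformly in $n\leq N$, so the rescaled amplitude is a uniformly bounded $\tilde\varrho(v,\eta)$ essentially supported in $v\in[-c_0-O(1),c_0+O(1)]$; hence the problem reduces to estimating
\[
F(X,Y):=\int e^{i\lambda\eta(Y+Xv+v^3/3)}\,\tilde\varrho(v,\eta)\,\Psi(\eta)\,dv\,d\eta,
\]
with $\|u^n_h(t,\cdot)\|_{L^r(\Omega)}\simeq a^{1/2+5/(2r)}\,\|F\|_{L^r(dX\,dY)}$.

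Next I would analyse $F$ asymptotically by doing the $v$-integral first. In the \emph{caustic zone} $\lambda^{-2/3}\lesssim |X|\leq c_0^2$, $X<0$, the phase has two non-degenerate critical points $v_\pm=\pm\sqrt{-X}$ and standard stationary phase gives $A(X,\eta):=\int e^{i\lambda\eta(Xv+v^3/3)}\tilde\varrho\,dv$ of size $(\lambda\eta)^{-1/2}|X|^{-1/4}$, oscillating as $e^{\mp 2i\lambda\eta(-X)^{3/2}/3}$. In the \emph{tip zone} $|X|\lesssim\lambda^{-2/3}$ I would rescale $v=(\lambda\eta)^{-1/3}u$ to recognise an Airy integral of size $(\lambda\eta)^{-1/3}$. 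For $X\geq c\lambda^{-2/3}$, repeated integration by parts yields $O(\lambda^{-\infty})$. Performing next the $\eta$-integral concentrates $F$ on a $\lambda^{-1}$-neighbourhood of the caustic curve $\{Y=\pm\tfrac{2}{3}(-X)^{3/2}\}$ with pointwise size $\simeq \lambda^{-1/2}|X|^{-1/4}$ in the caustic zone, and on a region of area $\simeq \lambda^{-5/3}$ around $(0,0)$ with pointwise size $\simeq \lambda^{-1/3}$ in the tip zone.

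The $L^r$ estimate then splits into two contributions. The caustic contribution is
\[
\int_{\lambda^{-2/3}\lesssim|X|\leq c_0^2}|X|^{-r/4}\,\lambda^{-r/2-1}\,dX,
\]
which is uniformly bounded for $r<4$ (giving $\|F\|^{r}_{L^r}\simeq\lambda^{-r/2-1}$) and grows as $\lambda^{r/6-2/3}$ for $r>4$ (giving $\|F\|^{r}_{L^r}\simeq\lambda^{-r/3-5/3}$). The tip contribution is $\simeq \lambda^{-r/3-5/3}$ in both ranges, so the caustic dominates for $r<4$ and the two are of the same order for $r>4$. Substituting $\lambda=a^{3/2}/h$ into $\|u^n_h(t,\cdot)\|_{L^r(\Omega)}\simeq a^{1/2+5/(2r)}\|F\|_{L^r}$ yields, after routine algebra, exactly the two cases in \eqref{estnorm}; the two formulas match at $r=4$ where both give $h^{3/4}$.

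The main obstacle will be the fact that $\varrho^n\in\mathcal{S}_{K_0}(\lambda/n)$ is not a plain $C^\infty_c$ function: it is a convolution-type symbol whose derivatives are of size $(\lambda/n)^{\alpha}$, with $\lambda/n\geq h^{-\epsilon}\gg 1$ but still much smaller than $\lambda$. I would handle this by pushing the stationary-phase and Airy expansions to sufficient order in $(\lambda\eta)^{-1/3}$: each correction term produces a factor $(\lambda\eta)^{-1/3}$, and differentiating $\tilde\varrho$ costs only $O((\lambda/n)^{-1})=O(h^{\epsilon})$, so every correction is strictly subleading. Uniformity in $n\leq N$ and in $t\in I_n$ then follows from the uniform control on $\varrho^n$ in Proposition \ref{propimportant} and the fact that the shift $z_0(t)$ remains in a fixed compact set, which also justifies treating the shifted, $t$-dependent symbol as if it were the universal $\tilde\varrho$.
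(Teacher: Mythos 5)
Your proof is correct and follows essentially the same route as the paper: center at the cusp tip, treat the $O(h^{2/3})$-neighbourhood of the tip by the Airy rescaling (pointwise size $\lambda^{-1/3}$ on a region of area $\lambda^{-5/3}$), treat the rest by stationary phase in $s$ giving amplitude $\simeq\lambda^{-1/2}|X|^{-1/4}$ concentrated on a $\lambda^{-1}$-neighbourhood of the caustic $\{Y=\pm\tfrac23(-X)^{3/2}\}$, and then integrate in $X$ to see the crossover at $r=4$; the paper performs the same splitting in the unscaled variables $(x,y)$ with the substitution $x=h^{2/3}\zeta$, $s=h^{1/3}u$ in the tip region, while you rescale globally by $a$ to a universal cusp, which is an equivalent bookkeeping. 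One small correction: by Definition \ref{dfns}(1), a function in $\mathcal{S}_{K_0}(\lambda/n)$ has $z$-derivatives bounded by constants \emph{independent} of $\lambda$ and $n$, so your worry that $\partial^\alpha_z\varrho^n$ grows like $(\lambda/n)^\alpha$ is unfounded — the uniform bounds on $\partial^\alpha_z\varrho^n$ guaranteed by Proposition \ref{propimportant} are exactly what is needed to run the stationary-phase expansion uniformly in $n\leq N$, and this (not a saving of $O(h^\epsilon)$ per derivative) is what the paper actually uses.
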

From Proposition \ref{propnorm} we deduce the following
\begin{cor}\label{corn}
For $t\in [4na^{1/2}-2a^{1/2}(1+c_{0}),4na^{1/2}+2a^{1/2}(1+c_{0})]$, the $L^{r}(\Omega)$ norms of a cusp $u^{n}_{h}(t,.)$ satisfy
\begin{itemize}
\item 
for $2\leq r<4$ 
\begin{equation}\label{estimnorm111}
\|u^{n}_{h}(t,.)\|_{L^{r}(\Omega)}\simeq
h^{\frac{1}{r}+\frac{1}{2}+\delta(\frac{1}{r}-\frac{1}{4})},
\end{equation}
\begin{equation}\label{estnorm211}
\|u^{n}_{h}(0,.)\|_{L^{2}(\Omega)}\simeq
h^{1+\frac{\delta}{4}}.
\end{equation}
\item for $r>4$ 
\begin{equation}\label{estnorm3}
\|u^{n}_{h}(t,.)\|_{L^{r}(\Omega)}\simeq
h^{\frac{1}{3}+\frac{5}{3r}}.
\end{equation}
\end{itemize}
\end{cor}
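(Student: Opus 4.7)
The plan is to obtain Corollary \ref{corn} as an immediate consequence of Proposition \ref{propnorm}, applied with the specific value $a=h^{\delta}$ that has been fixed at the beginning of Section \ref{cuspd2}. The time interval appearing in the Corollary coincides verbatim with the one appearing in the Proposition, so no additional justification is required to apply it.

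For the range $r>4$, the bound in Proposition \ref{propnorm} is $h^{\frac{1}{3}+\frac{5}{3r}}$ and involves no power of $a$ at all, so the estimate is read off directly. Heuristically this reflects the fact that in this range the $L^{r}$ norm is dominated by a neighborhood of the caustic $\pi(\Sigma)$, whose self-similar cusp profile is controlled by the Airy integral at the $h$-scale and is insensitive to the outer $a^{1/2}$-scale of the localization in $s$.

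For the range $2\leq r<4$, I would substitute $a=h^{\delta}$ into $h^{\frac{1}{r}+\frac{1}{2}}a^{\frac{1}{r}-\frac{1}{4}}$ to obtain $h^{\frac{1}{r}+\frac{1}{2}+\delta(\frac{1}{r}-\frac{1}{4})}$, which is exactly the stated bound. For the special case $r=2$ and $t=0$ featured in \eqref{estnorm211}, the only extra remark is to check that $0$ belongs to the admissible interval $[-2a^{1/2}(1+c_{0}),2a^{1/2}(1+c_{0})]$ corresponding to $n=0$, which is immediate since $c_{0}>0$; setting $r=2$ in the preceding display then gives $h^{1+\frac{\delta}{4}}$.

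The only genuine content therefore lies in Proposition \ref{propnorm} itself, whose proof (deferred to the Appendix) would require a careful stationary phase analysis of the oscillatory integral defining $u^{n}_{h}(t,\cdot)$, exploiting the cusp-type structure of the Lagrangian $\Lambda_{\Phi_{n}}$ and the fold locus $\Sigma=\{s=0\}$. The threshold $r=4$ arises from balancing the contribution from an $h^{2/3}$-neighborhood of the caustic $\pi(\Sigma)$ against the contribution from its complement: for $r>4$ the caustic concentration dominates and the norm becomes $a$-independent, while for $r<4$ the bulk term wins and the overall $a^{1/2}$-scale re-enters. Once that proposition is granted, Corollary \ref{corn} is pure bookkeeping with exponents and offers no real obstacle.
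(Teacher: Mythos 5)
Your proposal is correct and matches the paper exactly: the Corollary is obtained by substituting $a=h^{\delta}$ into the bounds of Proposition \ref{propnorm} (the paper gives no separate proof, writing only ``From Proposition \ref{propnorm} we deduce the following''), and your exponent arithmetic, including the $r=2$, $t=0$, $n=0$ case giving $h^{1+\delta/4}$, is right. Your heuristic for the $r=4$ threshold is also consistent with the Appendix proof of the Proposition, where it arises from the convergence or divergence of $\int\zeta^{-r/4}\,d\zeta$ near the caustic.
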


\begin{proof}
Let $0\leq n\leq N\simeq\lambda h^{\epsilon}$ be fixed and let 
\[
t\in [4na^{1/2}-2a^{1/2}(1+c_{0}),4na^{1/2}+2a^{1/2}(1+c_{0})].
\]
We compute the $L^{r}(\Omega)$ norms of
\begin{multline}
u^{n}_{h}(t,x,y)=\int_{\mathbb{R}^{2}}e^{\frac{i\eta}{h}(y-(1+a)^{1/2}t+(x-a)s+s^{3}/3-\frac{4}{3}na^{3/2})}\times\\ \times \Psi(\eta)\varrho^{n}(\frac{t+2(1+a)^{1/2}s}{2(1+a)^{1/2}a^{1/2}}-2n,\eta,\lambda)ds
d\eta,
\end{multline}
where the symbol $\varrho^{n}(.,\eta,\lambda)\in \mathcal{S}_{[-c_{0},c_{0}]}(\lambda/(n+1))$ defined in \eqref{dfnmodvrn} is essentially supported for the first variable in $[-c_{0},c_{0}]$ and where $\eta$ close to $1$ on the support of $\Psi$. Notice that due to the translation $y\rightarrow (y-t(1+a)^{1/2}+\frac{4}{3}na^{3/2})$ and the change
of variable $x\rightarrow (a-x)$ we are reduced to estimate the norm of 
\[
v^{n}_{h}(z,x,y):=\int e^{\frac{i\eta}{h}(y+\frac{s^{3}}{3}-sx)}\varrho^{n}(z+\frac{s}{h^{\frac{\delta}{2}}},\eta,\lambda)\Psi(\eta)dsd\eta,
\]
for $z=\frac{t}{2(1+a)^{1/2}a^{1/2}}-2n\in[-(1+c_{0}),1+c_{0}]$.
We distinguish several regions:
\begin{itemize}
\item For $|x|\leq Mh^{2/3}$
where $M$ is a constant, we make the changes of variables $x=\zeta
h^{2/3}$ and $s=h^{1/3}u$ which gives
\begin{equation}\label{umic}
I(z,x,\eta,h):=\int
e^{\frac{i\eta}{h}(\frac{s^{3}}{3}-sx)}\varrho^{n}(z+h^{-\delta/2}s,\eta,\lambda)ds
\end{equation}
\[
=h^{1/3}\int
e^{i\eta(\frac{u^{3}}{3}-u\zeta)}\varrho^{n}(z+h^{1/3-\delta/2}u,\eta,\lambda)
du.
\]
Let $Q(\zeta,u)=\frac{u^{3}}{3}-\zeta u$ and for $\theta:\mathbb{R}\rightarrow [0,1]$, set
\begin{equation}
F_{\theta}(w,\zeta,z,h)=\int e^{iw\eta}\Psi(\eta)f_{\theta}(\zeta,\eta,z,h)d\eta,
\end{equation}
\begin{equation}
f_{\theta}(\zeta,\eta,z,h)=\int e^{i\eta
Q(\zeta,u)}\theta(u)\varrho^{n}(z+h^{1/3-\delta/2}u,\eta,\lambda)du.
\end{equation}
We make integrations by parts in order to compute
\[
w^{k}F_{\theta}(w,\zeta,z,h)=i^{k}\int e^{iw\eta}\partial^{k}_{\eta}(\Psi(\eta)f_{\theta}(\zeta,\eta,z,h))d\eta,
\]
\[
\partial^{k}_{\eta}f_{\theta}(\zeta,\eta,z,h)=\int e^{i\eta Q(\zeta,u)}\theta(u)\sum_{j=0}^{k}C_{k}^{j}(iQ)^{k-j}\partial^{j}_{\eta}\varrho^{n}(z+h^{1/3-\delta/2}u,\eta,\lambda) du.
\]
Let $\theta(u)=1_{|u|\leq\sqrt{1+M}}$. Since we integrate for $\eta$ in a neighborhood of $1$, for all $k\geq 0$ we estimate
\begin{equation}\label{bau}
\|w^{k}F_{\theta}(w,\zeta,z,h)\|_{L^{\infty}_{w}}\leq 
\end{equation}
\[
\sum_{j=0}^{k}C_{k}^{j}\sup_{|u|\leq\sqrt{1+M}}|Q(\zeta,u)|^{k-j}\int |\partial^{j}_{\eta}\varrho^{n}(z+h^{1/3-\delta/2}u,\eta,\lambda)|d\eta \leq C_{k,M},
\]
where $C_{k,M}$ are constants and where we used the fact that $\varrho^{n}$ writes as a convolution product $\varrho^{n}(z,\eta,\lambda)=(F_{\eta\lambda})^{*n}*\varrho^{0}(.,\lambda)(z)$ and the derivatives in $\eta$ of $(F_{\eta\lambda})^{*n}$ were computed in Lemma \ref{lemlagramodwfset}.
\bigskip

For $\sqrt{1+M}\leq |u|\lesssim h^{\frac{\delta}{2}-\frac{1}{3}}$ we
integrate by parts using the operator
$L=\frac{\partial_{u}}{i\eta
\partial_{u}Q}$ which satisfies $L(e^{i\eta Q})=e^{i\eta Q}$. If we denote, for fixed $k$, $j\in\{0,..,k\}$ 
\[
Q^{k,j}_{0}:=(1-\theta(u))Q^{k-j}\partial^{j}_{\eta}\varrho^{n}(z+h^{1/3-\delta/2}u,\eta,\lambda),
\] 
and for $l\geq 0$, $Q^{k,j}_{l+1}=
\partial_{u}(\frac{Q^{k,j}_{l}}{\partial_{u}Q})$, then we can write
\begin{equation}
\int L^{l}(e^{i\eta Q})(1-\theta(u))Q^{k,j}_{0}du=\frac{(-1)^{l}}{(i\eta)^{l}}
\int e^{i\eta Q}Q^{k,j}_{l}du,
\end{equation}
where 
\[
Q^{k,j}_{l}=\sum_{m=0}^{l}c^{k,j}_{l,m}(\zeta,\varrho^{n}(z+.,\eta,\lambda))|u|^{3(k-j)-3l+m}h^{m
(\frac{1}{3}-\frac{\delta}{2})},
\]
where $c^{k,j}_{l,m}$ depends on the derivatives $\partial^{l-m}_{u}\partial^{j}_{\eta}\varrho^{n}(z+.,\eta,\lambda)$. The principal term is obtained for $j=0$ and $m=0$ and it equals $|u|^{3k-3l}$. It's enough to take $l=2k$ to obtain similar bounds for $\|w^{k}F_{1-\theta}(w,\zeta,z,h)\|_{L^{\infty}_{w}}$ as in \eqref{bau}. We find
\begin{multline}\label{estnorm}
\|v^{n}_{h}(z,.)\|_{L^{r}(|x|\leq
Mh^{2/3},y)}=h^{2/3r}(\int_{y}\int_{0}^{M}|v^{n}_{h}(z,h^{2/3}\zeta,y)|^{r}d\zeta
dy)^{1/r}\\
=h^{5/3r+1/3}\|F_{1}(w,\zeta,z,h)\|_{L^{r}(\zeta\leq M,w)}\simeq
h^{5/3r+1/3}.
\end{multline}

\item For $x\in(Mh^{2/3},A]$ with $M\gg1$ big enough we apply the stationary phase theorem:
\begin{prop}\label{thmphasestat}(\cite[Thm.7.7.5]{hormand})
Let $K\subset\mathbb{R}$ be a compact set, $f\in C^{\infty}_{0}(K)$,
$\phi\in C^{\infty}(\mathring{K})$ such that $\phi(0)=\phi'(0)=0$,
$\phi''(0)\neq 0$, $\phi'\neq 0$ in
$\mathring{K}\setminus \{0\}$. Let $\omega\gg 1$, then for every
$k\geq 1$ we have
\begin{equation}
|\int e^{i\omega\phi(u)}f(u)du-\frac{(2\pi i
)^{\frac{1}{2}}e^{i\omega\phi(0)}}{(\omega\phi''(0))^{\frac{1}{2}}}\sum_{j<k}\omega^{-j}L_{j}f|\leq C\omega^{-k}\sum_{|\alpha|\leq 2k}\sup|\partial^{\alpha}f|.
\end{equation}
Here $C$ is bounded when $\phi$ stays in a bounded set in $C^{\infty}(\mathring{K})$, $|u|/|\phi'(u)|$ has a uniform bound and
\begin{equation}\label{hormand}
L_{j}f=\sum_{\nu-\mu=j}\sum_{2\nu\geq3\mu}\frac{i^{-j}2^{-\nu}}{\mu!\nu!}(\phi''(0))^{-\nu}\partial^{2\nu}
(\kappa^{\mu}f)(0).
\end{equation}
where
$\kappa(u)=\phi(u)-\phi(0)-\frac{\phi''(0)}{2}u^{2}$ vanishes of third order at $0$. 
\end{prop}
We make the change of variable $s=\sqrt{x}(\pm 1+u)$ to compute
the integral in $s$ in the expression of $v_{h}$. Using Proposition \ref{thmphasestat} with $\phi_{\pm}(u)=\frac{u^{3}}{3}\pm u^{2}$, $\omega=\eta\frac{x^{3/2}}{h}\gg 1$, $\kappa_{\pm}(u)=u^{3}/3$ we write $I(z,x,\eta,h)$ as a sum $I(z,x,\eta,h)\simeq\sum_{\pm,j\geq 0}I^{j}_{\pm}(z,x,\eta,h)$, where
\begin{multline}\label{ispm}
I^{j}_{\pm}(z,x,\eta,h):=
(i\pi)^{1/2}h^{1/2+j}\eta^{-1/2-j}e^{\mp\frac{2}{3}i\eta x^{3/2}/h} x^{-1/4-3j/2}\times \\ \times L_{j}(\varrho^{n}(z+h^{-\frac{\delta}{2}}\sqrt{x}(\pm1+u),\eta,\lambda))|_{u=0}.
\end{multline}
We compute each $L^{r}$ norm of $\int e^{\frac{i y\eta}{h}}\Psi(\eta)I^{j}_{\pm}(z,x,\eta,h)d\eta$:
\begin{multline}\label{normtoest}
\|\int e^{\frac{i y\eta}{h}}\Psi(\eta)I^{j}_{\pm}(z,x,\eta,h)d\eta\|_{L^{r}(x\in(Mh^{2/3},A],y)}\simeq\\
h^{1/2+j}\|x^{-1/4-3j/2}
\int e^{\frac{i\eta}{h}(y\mp\frac{2}{3} x^{3/2})}\Psi(\eta)
\eta^{-1/2-j}L_{j}(\varrho^{n}(z\pm h^{-\delta/2}x^{1/2},\eta,\lambda))d\eta\|_{L^{r}(x\in(Mh^{2/3},A],y)}.
\end{multline}
Using again the fact that $\varrho^{n}(z,\eta,\lambda)=(F_{\eta\lambda})^{*n}*\varrho^{0}(.,\lambda)(z)$ we introduce the map $F^{n,j}(z,\eta):=\Psi(\eta)\eta^{-1/2-j}(F_{\eta\lambda})^{*n}(z)$ which is compactly supported in $\eta$; if $\widehat{F^{n,j}}(z,.)$ denotes its Fourier transform with respect to $\eta$, \eqref{normtoest} reads
\[
h^{1/2+j}\|x^{-1/4-3j/2}
\widehat{F^{n,j}}(.,\frac{(y\mp\frac{2}{3} x^{3/2})}{h})*L_{j}(\varrho^{n}(.,\eta,\lambda)(z\pm h^{-\delta/2}x^{1/2}))\|_{L^{r}(x\in(Mh^{2/3},A],y)}.
\]
Setting $y=hw$, $x=h^{2/3}\zeta$ and translating $w\rightarrow w\mp\frac{2}{3}\zeta^{3/2}$ we can estimate from above and from below each one of the above norms.
For $j\geq 0$, $L_{j}$ is a differential operator of order $2j$ and each derivative on $\varrho$ gives a factor $\sqrt{x}/h^{\delta/2}\leq 1$. We estimate the $L^{r}$ norm of $\int e^{\frac{i y\eta}{h}}\Psi(\eta)I(z,x,\eta,h)d\eta$ from above and from below by the sum over $j$ of
\[
Ch^{r(1/2+j+5/3r-1/6-j)}\int_{M}^{Ah^{-2/3}}\zeta^{-r(1/4+3j/2)}d\zeta
\]
where $C>0$ are constants, and since the operators $L_{j}$ are of order $2j$, for each $j$ there will be $2j$ terms in the sum: summing up over $j\geq 0$ (taking $M\geq 2$ for example) and using the assumption $\varrho^{n}\in\mathcal{S}_{[-c_{0},c_{0}]}(\lambda/(n+1))$ which assures uniform bounds for the derivatives $\partial^{j}\varrho^{n}(.,\eta,\lambda)$ for each $n,j\geq 0$, we obtain for $r>4$
\[
\|\int e^{\frac{i y\eta}{h}}\Psi(\eta)I(z,x,\eta,h)d\eta\|^{r}_{L^{r}(x\in(Mh^{2/3},A],y)}\simeq h^{r/3+5/3}\sum_{j\geq 0}\frac{j M^{1-r(1/4+3j/2)}}{(r(1/4+3j/2)-1)},
\]
and taking $M\geq 2$ sufficiently big we can sum over $j$ and we deduce \eqref{estnorm} for $r>4$.

For $r\in [2,4)$ and $j=0$ we have $r/4-1<0$ and 
\[
h^{r(1/2+5/3r-1/6)}\int_{M}^{Ah^{-2/3}}\zeta^{-r/4}d\zeta\simeq \frac{(Ah^{-2/3})^{1-r/4}}{1-r/4}.
\]
For $r\in [2,4)$ and $j\geq 1$ we have $r(1/4+3j/2)-1>0$ and 
\[
h^{r(1/2+j+5/3r-1/6-j)}\int_{M}^{Ah^{-2/3}}\zeta^{-r(1/4+3j/2)}d\zeta\simeq \frac{M^{1-r(1/4+3j/2)}}{r(1/4+3j/2)-1}.
\]
If $M\geq 2$ is sufficiently large the sum of the $L^{r}$ norms over $j\geq 0$ is small compared to the norm for $j=0$, hence \eqref{estnorm} follows for $r\in [2,4)$ too.

\item In the last case $x>a$ the $L^{r}(\Omega)$ norms are as small as we want since the contribution of $u^{n}_{h}$ in this case is $O_{L^{2}}(h^{\infty})$, because this region is localized away from a neighborhood of the Lagrangian $\Lambda_{\Phi_{n}}$ and we use Lemma \ref{lemlagramodwfset}. 
\end{itemize}
\end{proof}

\nocite{*}


\begin{thebibliography}{hormand}

\bibitem{blsmso08} M.Blair, H.Smith, C.Sogge:
 Strichartz estimates for the wave equation on manifolds with boundary, arxiv:0805.4733v2


\bibitem{bulepl07} N.Burq, G.Lebeau, F.Planchon: Global existence for energy critical waves in $3D$ domains, {J.Amer.Math.Soc.} 21, 831-845 (2008)

\bibitem{esk77}
G.~Eskin: Parametrix and propagation of singularities for the interior mixed hyperbolic problem, {J}.{A}nalyse {M}ath., 32, 17-62 (1977)

\bibitem{give85}
J.~Ginibre, G.~Velo: The global {C}auchy problem for the critical nonlinear {S}chr$\ddot{o}$dinger equation in $H^{s}$, {A}nn.{I}.{H}.{P}.{A}nal.non-lin.2, 309-327 (1985)

\bibitem{give95}
J.~Ginibre, G.~Velo: Generalized {S}richartz inequalities for the wave equation, {J}.{F}unct.{A}nal.133, 50-68 (1995)

\bibitem{hormand}
L.~H\"ormander: The analysis of linear partial differential operators {I},{III} , Springer-{V}erlag (1985)

\bibitem{doi} 
O.~Ivanovici: Counter examples to {S}trichartz estimates for the wave equation in domains {II}, to be submitted

\bibitem{lev90}
L.~Kapitanski: Some generalizations of the {S}trichartz-{B}renner inequality, {L}eningrad {M}ath.{J}.1, 693-676 (1990)

\bibitem{kosmta}
H.~Koch, H.~Smith, D.~Tataru: Subcritical $L^{p}$ bounds on spectral clusters for Lipschitz metrics, {Math.Res.Lett.} 15, no.5 (2008)

\bibitem{kt98}
M.~Keel, T.~Tao: Endpoints {S}trichartz estimates, {A}mer.{J}.{M}ath. 120, 955-980 (1998)


\bibitem{gle06}
G.~Lebeau: Estimations dispersives pour les ondes dans un domaine strictement convexe,  \url{http://archive.nundam.org/item?id=JEDP_2006___A7_0}, {E}vian (2006)

\bibitem{ls95}
H.~Lindblad, C.~Sogge: On existence and scattering with minimal regularity for semilinear wave equations, {J}.{F}unct.{A}nal.130, 357-426 (1995)

\bibitem{mart02}
A.~Martinez: An {I}ntroduction to {S}emiclassical and {M}icrolocal {A}nalysis, Springer (2002)

\bibitem{sjos}
J.~Sjostrand: Singularites analytiques microlocales, {A}sterisque 95 (1982)

\bibitem{smso95}
H.~Smith, C.~Sogge: On the critical semilinear wave equation outside convex obstacles, {J}.{A}mer.{M}ath.{S}oc. 8, 897-916 (1995)

\bibitem{smso06}
H.~Smith, C.~Sogge: On the $L^{p}$ norm of spectral clusters for compact manifolds with boundary, {A}cta {M}atematica 198, no.1 (2007)

\bibitem{stri77}
R.S.~Strichartz: Restriction of {F}ourier transforms to quadratic surfaces and decay of solutions of wave equation, {D}uke.{M}ath.{J}. 44, 705-714 (1977)

\end{thebibliography}
\end{document}